\newtheorem{theorem}{Theorem}[section]
\newtheorem{corollary}[theorem]{Corollary}
\newtheorem{lemma}[theorem]{Lemma}
\newtheorem{proposition}[theorem]{Proposition}
\newtheorem{definition}[theorem]{Definition}
\newtheorem{example}[theorem]{Example}
\newtheorem{remark}[theorem]{Remark}
\newcommand{\Hom}{{\rm Hom}}
\newcommand{\Ker}{{\rm Ker}}
\newcommand{\Coker}{{\rm Coker}}
\newcommand{\fp}{{\rm fp}}
\newcommand{\op}{{\rm op}}
\newcommand{\Ab}{{\rm Ab}}
\newcommand{\End}{{\rm End}}
\newcommand{\A}{\mathcal A}
\newcommand{\F}{\mathcal F}
\newcommand{\C}{\mathcal C}
\newcommand{\E}{\mathcal E}
\newcommand{\D}{\mathcal D}
\newcommand{\I}{\mathcal I}
\newcommand{\Z}{\mathbb Z}
\newcommand{\Q}{\mathbb Q}
\begin{document}
\sloppy

\title{One-sided exact categories}

\author{Silvana Bazzoni}

\address{Dipartimento di Matematica Pura e Applicata, Universit\`{a} di Padova, Via Trieste 63, 35121 Padova,
Italy} \email{bazzoni@math.unipd.it}

\author{Septimiu Crivei}

\address{Faculty of Mathematics and Computer Science \\ ``Babe\c s-Bolyai" University \\ Str. Mihail Kog\u alniceanu 1
\\ 400084 Cluj-Napoca, Romania} \email{crivei@math.ubbcluj.ro}

\subjclass[2000]{18E10, 18G50 (primary), 18E30, 18E40 (secondary)} \keywords{Additive category, one-sided exact category, weakly idempotent complete
category.}

\begin{abstract} One-sided exact categories appear naturally as instances of Grothendieck pretopologies. In an
additive setting they are given by considering the one-sided part of Keller's axioms defining Quillen's exact
categories. We study one-sided exact additive categories and a stronger version defined by adding the
one-sided part of Quillen's ``obscure axiom''. We show that some homological results, such as the Short Five Lemma
and the $3\times 3$ Lemma, can be proved in our context. We also note that the derived category of a one-sided exact
additive category can be constructed.
\end{abstract}

\thanks{First named author supported by MIUR, PRIN 2007, project ``Rings, algebras, modules and categories'' and by
Universit\`{a} di Padova (Progetto di Ateneo CPDA105885/10 ``Differential Graded Categoires''). Second named author acknowledges the support of the
Romanian grant PN-II-ID-PCE-2008-2 project ID\_2271. He would like to thank the members of the Department of
Mathematics, and especially the first author, for the kind hospitality during his visits at Universit\`{a} di Padova.}

\date{May 26, 2011}

\maketitle

\section{Introduction}

The framework of exact categories has naturally appeared in order to develop homological algebra in
a categorical setting more general than that of an abelian category. Several notions
of exact categories have been defined in the literature, for instance, by Barr \cite{Barr}, Heller \cite{Heller},
Quillen \cite{Q} (simplified by Keller \cite{Keller}), or Yoneda \cite{Y}, to mention only some of the most
representative ones. Their importance was underlined by the broad range of applications in algebraic geometry, algebraic
and functional analysis, algebraic $K$-theory etc. Several recent papers give rather exhaustive accounts on exact
categories in the sense of Quillen-Keller, defined by means of a distinguished class of kernel-cokernel pairs, called
conflations, e.g. \cite{Buhler} or \cite{FS}. Also, these exact categories have been recently shown to provide a
suitable setting for developping an approximation theory \cite{MS}. 

During the last decade the natural occurrence of one-sided exact categories has become apparent in connection with 
the notion of Grothendieck pretopology in the sense of \cite[Expos\'e II, Definition 1.3]{SGA4}. Such a
pretopology in an arbitrary category $\C$ is given by assigning to each object $U$ of $\C$ a family of arrows
ending in $U$, called coverings of $U$, satisfying certain axioms. Rosenberg \cite{Rosenberg} recently
introduced arbitrary right exact categories, whose axioms mean that a distinguished
class of strict epimorphisms yields the coverings of a Grothendieck pretopology. He showed that right exact
categories offer a suitable framework for homological theories which could appear in non-commutative algebraic
geometry, whereas left exact categories (that is, categories whose opposite categories are right exact) give the
possibility to develop a more general version of $K$-theory. In an additive setting, Rump briefly considered left exact
categories \cite[Definition~4]{Rump2} defined by means of a distinguished class of cokernels in connection with the
problem of proving the existence of flat covers in non-abelian categories. In particular, one-sided almost
abelian (also termed quasi-abelian) categories in the sense of Rump \cite{Rump1} have a natural structure of one-sided
exact category given by the class of all kernel-cokernel pairs. 

We shall consider here the context of an additive category, and we shall define and study one-sided exact categories.
Left exact categories in our sense are given by means of a distinguished class of cokernels, called
deflations, satisfying certain axioms. Considering the deflations ending in an object $U$ of a left exact category as
the coverings of $U$, the axioms of a left exact category are nothing else than those of a Grothendieck pretopology.
Unlike Rosenberg, we shall be mainly interested in the context of additive categories, where more results can be
obtained. We shall also study strongly one-sided exact categories, which are defined by adding half of Quillen's
``obscure axiom'' to the axioms of a one-sided exact category. It turns out that many results on exact categories do not
use this extra axiom, and we would like to emphasize this fact by separating the two notions.
Note that our strongly one-sided exact categories are nothing else than the one-sided version of Quillen's exact
categories. Some of our statements will be similar with those given for exact categories, as for instance results
proved in detail by B\"uhler~\cite{Buhler}, but several times our proofs will be different, since we shall use a reduced
set of axioms. Let us also note that our concept of strongly one-sided exact category generalizes Rump's notion of
one-sided exact category, and coincides with it provided the category is weakly idempotent complete. We shall usually
refer to (strongly) right exact categories, but the dual results on (strongly) left exact categories hold as well.

The paper is organized as follows. After the preliminaries, we shall define (strongly) one-sided exact categories in
Section 3. In Section 4 we shall show how (strongly) one-sided structures may be transferred between categories. We
shall prove that strongly right exact structures are already exact for a large class of categories, namely for
left quasi-abelian categories, and in particular for abelian categories. On the other hand, we shall show that if $\C$
is a right quasi-abelian category, $\D$ is a coreflective full subcategory of $\C$, and $\E$ is the class of
kernel-cokernel pairs in $\C$ with terms in $\D$, then $\D$ is a right quasi-abelian category and $\E$ gives
rise to a strongly right exact structure on $\D$, which is exact if and only if $\D$ is left quasi-abelian. Explicit
examples of strongly one-sided exact categories which are not exact will be presented. Section 5 will further develop
the theory of right exact categories. We shall study how conflations behave with respect to direct sums and we
give several characterizations of pushouts in right exact categories. Also, we shall show that the Short Five Lemma is
true in right exact categories, generalizing the same result which was known to hold in right quasi-abelian categories
\cite[Lemma~3]{Rump1}. Moreover, we shall prove the $3\times 3$ Lemma in strongly right exact categories. In Section 6
we shall show that weakly idempotent complete strongly right exact categories are right exact in the sense of Rump. Some
further properties on direct sums will be given. In the last section we shall show that the derived category of a right
exact category can be constructed similarly to the derived category of an exact category.

\section{Preliminaries}

In this section we recall some notions and results that will be used throughout the paper. 

The following two lemmas are straightforward. They are dual to \cite[Theorem~5]{RW} and \cite[Example 3, p.~93]{Sten}
respectively, which hold in arbitrary categories. 

\begin{lemma} \label{l:RW} Let $\C$ be a category. Let $i:A\to B$ and $f:A\to A'$ be morphisms in $\C$ such
that $i$ has a cokernel $d:B\to C$, and the pushout of $i$ and $f$ exists. Then the pushout gives rise to the 
commutative diagram 
\[\SelectTips{cm}{}
\xymatrix{
A \ar[d]_f \ar[r]^{i} & B \ar[d]^g \ar[r]^{d} & C \ar@{=}[d] \\ 
A' \ar[r]^{i'} & B' \ar[r]^{d'} & C  
}\] 
in which $d':B'\to C'$ is a cokernel of $i'$.
\end{lemma}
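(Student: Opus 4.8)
The plan is to construct $d'$ directly from the universal property of the pushout and then verify, by a routine diagram chase, that it is a cokernel of $i'$.

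First I would produce the morphism $d'$. Since $d\colon B\to C$ is a cokernel of $i$, we have $di=0$, and of course the zero morphism $0\colon A'\to C$ satisfies $0\cdot f = 0 = di$. Hence the universal property of the pushout square applied to the pair $(d\colon B\to C,\ 0\colon A'\to C)$ yields a unique morphism $d'\colon B'\to C$ with $d'g = d$ and $d'i'=0$. This already gives the asserted commutative diagram: the left square is the pushout square itself (so $gi=i'f$), and on the right we take $C'=C$ with the identity as the rightmost vertical map, the commutativity $d'g=d$ being exactly the defining property of $d'$.

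It then remains to check that $d'$ is a cokernel of $i'$. We already know $d'i'=0$. Let $h\colon B'\to X$ be any morphism with $hi'=0$. Then $(hg)i = h(gi) = h(i'f) = (hi')f = 0$, so, since $d$ is a cokernel of $i$, there is a unique $k\colon C\to X$ with $kd = hg$. I claim $kd' = h$: indeed $(kd')g = k(d'g) = kd = hg$ and $(kd')i' = k(d'i') = 0 = hi'$, so $kd'$ and $h$ become equal after composition with both $g$ and $i'$, whence $kd' = h$ by the uniqueness part of the pushout property. Finally, if $k'\colon C\to X$ also satisfies $k'd' = h$, then $k'd = k'd'g = hg = kd$, and since $d$ is an epimorphism (being a cokernel) we conclude $k'=k$. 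Thus $d'$ is a cokernel of $i'$.

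The whole argument is purely formal, consisting of two applications of the pushout's universal property together with the universal property of the cokernel $d$. There is no real obstacle; the only point to be careful about is to invoke that the cokernel $d$ is an epimorphism when establishing uniqueness of the induced factorization through $d'$.
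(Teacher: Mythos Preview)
Your argument is correct and is exactly the standard verification one would expect. In fact the paper does not give a proof of this lemma at all: it simply remarks that the statement is straightforward and dual to \cite[Theorem~5]{RW}, so your direct derivation from the universal properties of the pushout and the cokernel is precisely the intended routine check.
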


\begin{lemma} \label{l:po} Let $\C$ be a category. Consider a commutative diagram
\[\SelectTips{cm}{}
\xymatrix{
A \ar@{=}[d] \ar[r]^{i} & B \ar[d]^g \ar[r]^{d} & C \ar[d]^h \\ 
A \ar[r]^{i'} & B' \ar[r]^{d'} & C'  
}\] 
in which $d$ is an epimorphism and $d':B'\to C'$ is a cokernel of $i'$. Then the right square is a pushout.
\end{lemma}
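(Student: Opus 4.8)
The plan is to verify the universal property of a pushout directly for the right-hand square; the relevant span consists of $d\colon B\to C$ and $g\colon B\to B'$, with $C'$ as the prospective pushout. So I would fix an arbitrary object $T$ of $\C$ together with morphisms $u\colon B'\to T$ and $v\colon C\to T$ satisfying $ug=vd$, and try to produce a unique $w\colon C'\to T$ with $wd'=u$ and $wh=v$.

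The first step is to factor $u$ through $d'$. Since $d'$ is a cokernel of $i'$, it suffices to check that $ui'=0$. Commutativity of the left-hand square gives $i'=gi$, and using that the top row is a complex, that is, $di=0$, one computes
\[
ui'=ugi=(ug)i=(vd)i=v(di)=0 .
\]
Hence there is a unique morphism $w\colon C'\to T$ with $wd'=u$.

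It then remains to check $wh=v$ and the uniqueness of $w$. From $wd'=u$ and commutativity of the right-hand square, $hd=d'g$, we get $whd=wd'g=ug=vd$; since $d$ is an epimorphism we may cancel it to obtain $wh=v$. For uniqueness, any $w'\colon C'\to T$ with $w'd'=u$ must equal $w$ because $d'$, being a cokernel, is an epimorphism (and then $w'h=v$ follows automatically by the same cancellation). Therefore the right-hand square is a pushout.

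The argument is essentially formal, in keeping with the paper's remark that these preliminary lemmas are straightforward, so I do not anticipate a real obstacle. The only place where the hypotheses genuinely enter is the vanishing $ui'=0$: this is exactly where one uses that the top row is a complex ($di=0$), which holds, for instance, whenever $d$ is a cokernel of $i$ as in Lemma~\ref{l:RW}; and the epimorphism hypothesis on $d$ is in turn precisely what lets one pass from $whd=vd$ to $wh=v$. Everything else is just reading off the universal property of the cokernel $d'$.
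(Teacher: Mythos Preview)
Your proof is correct and is exactly the direct verification of the pushout universal property one would write down; the paper itself gives no argument, merely recording the lemma as straightforward and dual to a textbook reference. Your remark about the tacit hypothesis $di=0$ is well taken: the lemma genuinely needs the top row to be a null composite (take $i=d=i'=g=1_{\mathbb Z}$ and $C'=0$ in abelian groups for a counterexample otherwise), and in every application the paper makes of this lemma the top row is in fact a conflation, so the hypothesis is available.
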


\begin{definition} \rm An additive category is called \emph{pre-abelian} if it has kernels and cokernels. It is called
\emph{right (left) quasi-abelian} or \emph{right (left) almost abelian} (\cite{Rump1}, \cite{Rump2}) if it is
pre-abelian and kernels (cokernels) are stable under pushouts (pullbacks), that is, a pushout (pullback) of a kernel
(cokernel) along an arbitrary morphism is a kernel (cokernel). It is called \emph{quasi-abelian} if it is left and right
quasi-abelian.
\end{definition}

\begin{definition} \rm Let $\C$ be a category and let $\D$ be a full subcategory of $\C$. Then $\D$ is
called \emph{reflective} (respectively \emph{coreflective}) if the inclusion functor $i:\D\to \C$ has a left
(respectively right) adjoint. 
\end{definition}

\begin{definition} \rm Let $\A$ be a complete and cocomplete abelian category. Recall (e.g. from
\cite[Chapter~VI, \S1]{Sten}) that a \emph{preradical} on $\mathcal{A}$ is a subfunctor of the identity functor on
$\mathcal{A}$. A preradical $r$ on $\mathcal{A}$ is called a \emph{radical} if $r(A/r(A))=0$ for every object $A$ of
$\mathcal{A}$. 

A full subcategory $\C$ of $\A$  is called a \emph{pretorsion class} if there is a preradical $r$
on $\A$ such that $\C$ consists of the objects $C$ of $\A$ with $r(C)=C$. Dually, $\C$ is called a \emph{pretorsion
free class} if there is a preradical $r$ on $A$ such that $\C$ consists of the objects $C$ of $\A$ with $r(C)=0$. 
\end{definition}

Note that any pretorsion class is closed under coproducts and quotients, whereas any pretorsion free class is closed
under products and subobjects. Pretorsion (pretorsion free) classes are in bijection with idempotent preradicals
(radicals). For more details we refer to \cite[Chapter~VI]{Sten}. Every pretorsion class in $\A$ with
associated preradical $r:\A\to \A$ is a coreflective full subcategory of $\A$, and the right adjoint of the inclusion
functor $i:\C\to {\rm Ab}$ is the functor $r$ viewed as $r:{\rm Ab}\to \C$. Dually, every pretorsion free class in $\A$
is a reflective full subcategory of $\A$ (see \cite[Chapter~X, \S 1]{Sten}). 

To set the terminology we recall the following well-known notions. A morphism $s:A\to B$ in a category $\mathcal{C}$ is
called a \emph{section} if it has a left inverse, that is, there is a morphism $r:B\to A$ in $\mathcal{C}$ such that
$rs=1_A$. A morphism $r:B\to A$ is called a \emph{retraction} if it has a right inverse, that is, there is a morphism
$s:A\to B$ in $\mathcal{C}$ such that $rs=1_A$.

\begin{lemma} {\rm (e.g. \cite{Buhler})} \label{l:secret} The following are equivalent in an additive category:

(i) Every section has a cokernel.

(ii) Every retraction has a kernel.
\end{lemma}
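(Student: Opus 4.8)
The plan is to prove the equivalence by a symmetric argument, so it suffices to establish one implication, say (i) $\Rightarrow$ (ii), and then apply the result to the opposite category $\C^{\op}$ to obtain (ii) $\Rightarrow$ (i), since sections and retractions, kernels and cokernels, all swap roles under passage to $\C^{\op}$. So assume every section has a cokernel, and let $r:B\to A$ be a retraction, with $s:A\to B$ a chosen right inverse, $rs=1_A$. I want to produce a kernel of $r$.

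The key observation is that in an additive category the composite $e=sr:B\to B$ is idempotent, and the complementary idempotent $1_B-e$ satisfies $(1_B-e)s=s-sr s=s-s=0$, so $s$ factors through the ``image'' of $1_B-e$ in a suitable sense; more precisely, $1_B - sr$ kills $s$, hence also kills $\Im s$. The natural candidate for a kernel of $r$ is a cokernel of $s$: indeed, morally $r$ has kernel equal to the ``complement'' of the split summand picked out by $s$. So first I would form the cokernel of the section $s$, which exists by hypothesis (i); call it $c:B\to C$. Then I would check directly that $c$ is a kernel of $r$. One inclusion is immediate: $cs=0$ by definition of cokernel, but I actually need that $r$ ``is'' the cokernel map composed appropriately — rather, I need to verify the universal property of the kernel of $r$ for the map $c$.

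Concretely, I would argue as follows. Given any morphism $t:X\to B$ with $rt=0$, I must factor $t$ uniquely through $c$; but that is backwards — $c$ is a cokernel, not a kernel, so instead I should use the dual construction. The cleaner route: since hypothesis (i) gives cokernels of sections, and $s:A\to B$ is a section, form its cokernel $c:B\to C$. By Lemma~\ref{l:RW}-type reasoning or directly, the split short exact data $1_B = sr + (1_B-sr)$ shows $B \cong A\oplus \Ker r$ would hold if the kernel existed; the point is to manufacture it. I claim $1_B - sr$ equalizes the pair $(0, r)$ in the sense that $r(1_B-sr)=r-rsr=r-r=0$, so $1_B-sr$ factors through any kernel of $r$. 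Conversely I must show $1_B-sr$ \emph{is} a kernel of $r$, i.e. that it is monic onto the relevant part and enjoys the universal property. Monomorphy: if $(1_B-sr)u=0$ then $u = sru$, so $u$ factors through $s$; but then $ru = rsru = ru$, which is vacuous, so I instead note that a morphism $u$ with $(1_B-sr)u=0$ satisfies $u=sru=s(ru)$, and since $s$ is a section it is monic, so $u$ is determined by $ru$; combined with the kernel universal property this pins things down. The cleanest formulation: I will show that a cokernel $e:B\to B/A$ of $s$ \emph{is} at the same time a kernel of $r$ by exhibiting an explicit splitting making the two-term complex $A\xrightarrow{s}B\xrightarrow{r}A$ split, and then invoking that in any additive category a split idempotent decomposition $B=\Im s\oplus \Ker r$ exists once one summand's projection (here, a cokernel of $s$) is available.

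The main obstacle I anticipate is the bookkeeping of universal properties in the purely additive (not idempotent-complete) setting: I cannot simply ``split off'' the idempotent $1_B-sr$ and call its image a kernel, because that image need not exist without idempotent completeness — which is precisely why the lemma is non-trivial. The correct resolution is that the \emph{cokernel of $s$} serves as a concrete model for that would-be summand, and the verification that it satisfies the kernel universal property for $r$ is the heart of the matter: given $t:X\to B$ with $rt=0$, I must show $t$ factors uniquely through a cokernel $c:B\to C$ of $s$ via a map that I then reinterpret. Actually the genuinely correct statement is the reverse pairing — I should take the cokernel $q:B\to Q$ of $s$, observe $s = sr\cdot s$... let me instead commit to: \emph{the cokernel of $s$, precomposed with the section data, yields a morphism which is a kernel of $r$}, and the verification reduces, after the splitting $1_B = sr+(1_B-sr)$, to the statement that in an additive category a summand complementary to a given split summand is detected by either the inclusion of the complement or the projection onto it — and a cokernel of $s$ provides exactly such a projection, from which the complementary inclusion (hence a kernel of $r$) is recovered as the unique map $k$ with $qk$ iso and $rk=0$. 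Checking this last universal property is routine diagram chasing once the split structure is in place, so I expect no serious difficulty beyond careful organization.
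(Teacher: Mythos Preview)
The paper does not actually prove this lemma; it merely cites B\"uhler. So there is no in-paper proof to compare against, though the relevant computation appears verbatim in the paper's proof of Lemma~\ref{l:section}.

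Your overall strategy---prove (i) $\Rightarrow$ (ii) and then dualize---is correct, and you have identified all the right ingredients: the section $s$ with $rs=1_A$, its cokernel (available by (i)), and the complementary idempotent $1_B-sr$. But the write-up oscillates on the one point that matters, and at one stage you assert something false: ``a cokernel $e:B\to B/A$ of $s$ is at the same time a kernel of $r$''. That cannot be right even as a type; a kernel of $r$ must be a morphism \emph{into} $B$, while the cokernel of $s$ is a morphism \emph{out of} $B$. You partially correct yourself at the end by speaking of a ``complementary inclusion $k$'', but you never say how to produce $k$, only that it should satisfy $qk$ iso and $rk=0$---which is a wish list, not a construction.

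The missing step is a single clean application of the cokernel universal property. Let $p:B\to C$ be a cokernel of $s$. Since $(1_B-sr)s=0$, there is a unique $v:C\to B$ with $vp=1_B-sr$. Then $rvp=r-rsr=0$ gives $rv=0$ (as $p$ is epi), and $pvp=p-psr=p$ gives $pv=1_C$, so $v$ is a (split) monomorphism. For any $t:X\to B$ with $rt=0$ one has $t=(1_B-sr)t=v(pt)$, and uniqueness of the factorization follows from $v$ being monic. Hence $v=\Ker(r)$. This is precisely the computation in the paper's proof of Lemma~\ref{l:section}; once you pin down $v$ in this way, the ``routine diagram chasing'' you allude to really is routine.
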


\begin{definition} \rm {\rm (\cite{Buhler}, \cite{Prest})} \label{l:splitid} An additive category
$\mathcal{C}$ is said to \emph{have split idempotents} (or \emph{be idempotent complete}) if for any object $A$ of
$\mathcal{C}$, any idempotent $e=e^2\in \End(A)$ has a kernel. Also, $\mathcal{C}$ is called \emph{weakly
idempotent complete} if the equivalent conditions of Lemma \ref{l:secret} hold.
\end{definition}

\begin{remark}\label{rem:splitid} 
\rm  (1) If $\mathcal{C}$ is an additive category with split idempotents, then $\mathcal{C}$ is weakly idempotent
complete (e.g. see \cite[Remark~6.2]{Buhler}). The converse does not hold in general (see \cite[Exercise~7.11]{Buhler}).

(2) Every additive category has an idempotent-splitting completion (see \cite[p.~7]{Prest}). 
\end{remark}

The next characterization of sections in weakly idempotent complete additive categories easily
follows by \cite[Remark 7.4]{Buhler}. We sketch a proof since we shall use it in Proposition~\ref{p:defl}.

\begin{lemma} \label{l:section} Let $\mathcal{C}$ be a weakly idempotent complete additive category. Then a morphism
$s:A\to B$ in $\mathcal{C}$ is a section if and only if $s$ is isomorphic to $\left [\begin{smallmatrix}
1 \\ 0 \end{smallmatrix}\right ]:A\to A\oplus C$ for some object $C$ of $\mathcal{C}$. 
\end{lemma}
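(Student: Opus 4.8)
The plan is to prove the two implications separately, the forward one carrying all the content. For the ``if'' direction, observe that the morphism $\iota=\left[\begin{smallmatrix} 1 \\ 0\end{smallmatrix}\right]:A\to A\oplus C$ has the left inverse $\left[\begin{smallmatrix} 1 & 0\end{smallmatrix}\right]:A\oplus C\to A$; hence if $s$ is isomorphic to $\iota$ --- say $\iota\alpha=\beta s$ for isomorphisms $\alpha$ on the domains and $\beta$ on the codomains --- then $\alpha^{-1}\left[\begin{smallmatrix} 1 & 0\end{smallmatrix}\right]\beta$ is a left inverse of $s$, so $s$ is a section. No assumption on $\mathcal{C}$ is needed here.

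For the ``only if'' direction, let $r:B\to A$ be a left inverse of $s$, so $rs=1_A$. Then $r$ is a retraction, and weak idempotent completeness (Lemma~\ref{l:secret}(ii)) provides a kernel $k:C\to B$ of $r$. Since $r(1_B-sr)=r-r=0$, the morphism $1_B-sr$ factors as $1_B-sr=kp$ for some $p:B\to C$. I would then claim that $\left[\begin{smallmatrix} s & k\end{smallmatrix}\right]:A\oplus C\to B$ and $\left[\begin{smallmatrix} r \\ p\end{smallmatrix}\right]:B\to A\oplus C$ are mutually inverse. One composite is immediate from the choice of $p$: $sr+kp=sr+(1_B-sr)=1_B$. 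For the other composite the entries are $rs=1_A$, $rk=0$ (as $k$ is a kernel of $r$), and $ps$, $pk$; one gets $ps=0$ and $pk=1_C$ by precomposing with the monomorphism $k$, using $kps=(1_B-sr)s=s-s=0$ and $kpk=(1_B-sr)k=k-srk=k$. Thus $\left[\begin{smallmatrix} s & k\end{smallmatrix}\right]$ is an isomorphism $A\oplus C\to B$ which, together with $1_A$ on the domain, identifies $s$ with $\iota$.

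The one place where care is required is the invocation of the hypothesis: weak idempotent completeness is used exactly to produce the kernel $k$ of the retraction $r$. It would be a mistake to argue instead via a kernel of the idempotent $sr$, since that requires the stronger assumption that $\mathcal{C}$ has split idempotents. Once $k$ is available, the remaining verifications are a routine $2\times 2$ matrix computation resting only on $k$ being monic, so I do not expect any further difficulty.
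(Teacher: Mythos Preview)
Your proof is correct and follows essentially the same approach as the paper's. The only difference is a harmless dualization: the paper invokes condition (i) of Lemma~\ref{l:secret} to obtain a cokernel $p:B\to C$ of the section $s$ and then factors $1_B-sr$ through $p$, whereas you invoke condition (ii) to obtain a kernel $k:C\to B$ of the retraction $r$ and factor $1_B-sr$ through $k$; the resulting matrix verification is otherwise identical.
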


\begin{proof} Assume first that $s:A\to B$ is a section in $\mathcal{C}$. Then there is a morphism $r:B\to A$ such
that $rs=1_A$. By Lemma \ref{l:secret} $s$ has a cokernel, say $p:B\to C$. Since $(1_B-sr)s=0$, there is a unique
morphism $v:C\to B$ such that $vp=1_B-sr$. Then $pvp=p-psr=p$, which implies $pv=1_C$. Also, $rvp=r-rsr=0$, whence
$rv=0$, because $p$ is an epimorphism. Now it is easy to see that $\left [\begin{smallmatrix} r \\ p
\end{smallmatrix}\right ]:B\to A\oplus C$ is an isomorphism with inverse $\left [\begin{smallmatrix} s & v
\end{smallmatrix}\right ]:A\oplus C\to B$. Hence $s:A\to B$ may be identified with $\left [\begin{smallmatrix} 1 \\ 0
\end{smallmatrix}\right ]:A\to A\oplus C$. Similarly for the converse.
\end{proof}

\section{Definition of one-sided exact categories}

We introduce (strongly) one-sided exact categories by considering the one-sided part of Quillen-Keller's axioms
defining exact categories. 

\begin{definition} \rm By a \emph{right exact category} we mean an additive category $\mathcal{C}$ endowed with a
distinguished class of kernels, which are called \emph{inflations} and are denoted by $\rightarrowtail$, satisfying the
following axioms: \vskip2mm

[R0] The identity morphism $1_0:0\to 0$ is an inflation.

[R1] The composition of any two inflations is again an inflation.

[R2] The pushout of any inflation along an arbitrary morphism exists and is again an inflation. \vskip2mm

The pushout of an inflation $i:A\rightarrowtail B$ along $A\to 0$ yields its cokernel $d:B\to C$, which is called
\emph{deflation} and is denoted by $d:B\twoheadrightarrow C$. By \cite[Proposition~3.4, Chapter IV]{Sten}, every
inflation gives rise to a short exact sequence, that is, a kernel-cokernel pair, $A\rightarrowtail B\twoheadrightarrow
C$, which is called \emph{conflation}. An additive category $\mathcal{C}$ is called \emph{left exact} if its
opposite category $\mathcal{C}^{\rm op}$ is right exact and it is called \emph{exact} if it is both left and right
exact.
\end{definition}

\emph{In what follows we shall refer to right exact categories. Obviously, our results are dualizable for left exact
categories.}

\begin{definition} \rm  Let $\C$ be a right exact category. 

We find it convenient to consider a stronger form of axiom [R0], namely: \vskip2mm 

[R0$*$] $0\to A$ is an inflation for every object $A$ of $\C$. \vskip2mm 

We say that $\C$ is  \emph{strongly right exact} if it satisfies the following axiom: \vskip2mm 

[R3] If $i:A\to B$ and $p:B\to C$ are morphisms in $\mathcal{C}$ such that $i$ has a cokernel and $pi$ is an inflation,
then $i$ is an inflation. 
\end{definition}

\begin{remark}\label{rem:exact} \rm  (1) Axiom [R3] is the right part of Quillen's {\it ``obscure axiom''}. 
We point out that by  Keller \cite[Appendix~A]{Keller}, a left and right exact category in our sense coincides
with that of an exact category in the sense of Quillen \cite{Q}, that is, it satisfies axiom [R3] and its dual [R3$^{\rm
op}$]. Moreover, again by \cite[Appendix~A]{Keller}, an additive category is exact if and only if it satisfies [R0],
[R1], [R2] and [R2$^{\rm op}$].

(2) Rump defined a right exact category as an additive category $\C$ endowed with a distinguished class of kernels
satisfying the above axioms [R0]-[R2] and axiom [R3] without asking that $i$ has a cokernel \cite[Definition~4]{Rump2}. 
Proposition \ref{p:obscure} below shows that a category is strongly right exact in our sense if and only if it is right
exact in the sense of Rump, provided the category is weakly idempotent complete. 

(3) A left exact category $\C$ in our sense may be seen as an instance of a Grothendieck pretopology
\cite[Expos\'e II, Definition 1.3]{SGA4}. In general, this pretopology is given by assigning to each object $U$ of
a category $\C$ a family of arrows $\{U_i\to U\}$, called \emph{coverings} of $U$, satisfying certain axioms.
Considering the deflations ending in an object $U$ of a left exact category as the coverings of $U$, the axioms of a
left exact category are nothing else than those of a Grothendieck pretopology.

(4) Rosenberg considered the context of not necessarily additive categories \cite[1.1]{Rosenberg}. The axioms of his
right exact categories mean that a distinguished class of strict epimorphisms yields the coverings of a Grothendieck
pretopology (note the left-right difference between Rosenberg's terminology and ours). 

(5) By the Gabriel-Quillen theorem (see \cite{TT}), every (small) exact category embeds into an abelian category.
Similarly, there is a Gabriel-Quillen type embedding theorem showing that for every (small) right exact category $\C$
there exists an exact category $\C'$ and a fully faithful exact functor (in the sense that it preserves conflations)
from $\C$ to $\C'$ which is universal \cite[Proposition~2.6.1]{Rosenberg}. We shall not make use of this embedding for
transferring properties from exact categories to one-sided exact categories, preferring to give a clearer insight of
one-sided categories through direct proofs from the axioms.
\end{remark}

We have some immediate consequences of the axioms.

\begin{lemma} \label{l:basic} Let $\mathcal{C}$ be a right exact category. Then:

(i) For each object $A$ of $\mathcal{C}$, the identity morphism $1_A$ is an inflation.

(ii) Every isomorphism is an inflation.

(iii) If $\mathcal{C}$ is strongly right exact, then $\C$ satisfies axiom {\rm [R0$*$]}.
\end{lemma}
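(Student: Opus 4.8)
The plan is to extract all three items directly from the axioms, the engine being stability of inflations under pushout ([R2]) together with the observation that a pushout along an isomorphism carries no information. Part (iii) will instead be an immediate application of the extra axiom [R3], so it is the only item that uses strong right exactness.

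For (i), I would begin with [R0], so that $1_0\colon 0\to 0$ is an inflation, and exhibit $1_A$ as a pushout of it. For an object $A$, the commutative square whose rows are $1_0$ (top) and $1_A$ (bottom) and whose columns are both the zero morphism $0\to A$ is a pushout of $1_0$ along $0\to A$: this is immediate from the universal property, since pushing out an isomorphism merely transports the data along it. By [R2] the bottom row $1_A$ is then an inflation. For (ii), given an isomorphism $u\colon A\to B$, I would use the analogous square whose top and left edges are $1_A$ and whose right and bottom edges are $u$; it is a pushout of the inflation $1_A$ (an inflation by (i)) along $1_A$, so [R2] makes $u$ an inflation. In each case all that is needed is a one-line check of the universal property of these small squares, which I would only indicate.

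For (iii) I would apply [R3] to the pair $i\colon 0\to A$, $p\colon A\to 0$. Here $i$ has a cokernel, namely $1_A\colon A\to A$, and the composite $pi\colon 0\to 0$ equals $1_0$, which is an inflation by [R0]; hence [R3] gives that $0\to A$ is an inflation, which is precisely [R0$*$].

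I do not expect a genuine difficulty: this lemma is essentially a warm-up. The one point requiring care is interpretational rather than computational. Because a pushout is determined only up to isomorphism, [R2] must be read as asserting that in \emph{any} pushout square built on an inflation the parallel morphism is again an inflation; it is exactly this reading that licences identifying pushout objects with isomorphic ones in the arguments above, and in fact (ii) is nothing but the statement that inflations are closed under isomorphism. I would flag this reading once, explicitly but briefly, after which the three verifications become routine.
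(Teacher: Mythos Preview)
Your proposal is correct, and where the paper gives an explicit argument (part (iii)) it coincides with yours verbatim: apply [R3] to $0\to A\to 0$ using that $1_A$ is a cokernel of $0\to A$ and that $1_0$ is an inflation by [R0]. For (i) and (ii) the paper only cites Keller and B\"uhler, and your explicit pushout arguments from [R0] and [R2]---together with your remark on reading [R2] as closure of inflations under isomorphism---are precisely the standard way to unpack those references.
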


\begin{proof} (i) Dual to \cite[Appendix A, Step 4]{Keller}.

(ii) See \cite[Remark 2.3]{Buhler}. 

(iii) Since $1_A\colon A\to A$ is a cokernel of $0\to A$ and $1_0\colon 0\to 0$ is the composition of $0\to A$ followed
by $A\to 0$, the morphism $0\to A$ is an inflation by axioms [R0] and [R3].
\end{proof}

\begin{example}\label{e:examples} \rm (1) In any additive category there is an exact structure whose conflations are the
split exact sequences.

(2) In any right quasi-abelian category there is a strongly right exact structure whose
conflations are the kernel-cokernel pairs (e.g., see \cite[Proposition 2 and Corollary 1]{Rump1}).

(3) It is easy to see that the class of all isomorphisms in a category gives rise to a right exact structure which in
general does not satisfy axiom [R0$*$], and so it is not strongly right exact by Lemma \ref{l:basic}.
\end{example}

We illustrate the relations between different types of categories, and (one-sided) exact
categories respectively in the following diagrams:
{\small
\[\SelectTips{cm}{}
\xymatrix{ 
\textrm{abelian} \ar@{->}[d] & & \textrm{Quillen exact} \ar@{->}[dl] \ar@{->}[dr] & \\
\textrm{quasi-abelian} \ar@{->}[d] & \textrm{strongly left exact} \ar@{->}[d] &  & \textrm{strongly right exact}
\ar@{->}[d] \\ 
\textrm{pre-abelian} \ar@{->}[d] & \textrm{left exact with [R0$*^{\rm op}$]} \ar@{->}[d] & & \textrm{right exact with
[R0$*$]}
\ar@{->}[d] \\ 
\textrm{additive} & \textrm{left exact} &  & \textrm{right exact} 
} \]
}

\section{Constructions of one-sided exact structures}

An extension closed full subcategory $\D$ of an additive exact category $\C$ inherits the exact structure given by all
conflations in $\C$ having terms in $\D$ \cite[Lemma 10.20]{Buhler}. This is also a standard way of constructing
new one-sided exact structures from existent ones. 

\begin{proposition} Let $\C$ be a right exact category and $\D$ an extension closed full subcategory of $\C$.
Then the class of all conflations in $\C$ having terms in $\D$ defines a right exact structure on $\D$.
\end{proposition}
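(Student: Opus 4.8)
The plan is to verify the three axioms [R0], [R1], [R2] directly for the class $\E$ of those conflations $A\rightarrowtail B\twoheadrightarrow C$ of $\C$ whose three terms all lie in $\D$, where the inflations of $\D$ are by definition the first maps of such conflations. Throughout, one uses only the right exact structure of $\C$ and Lemma~\ref{l:RW}; no appeal to [R3] is needed. A few preliminary observations: $\D$ is additive and closed under isomorphisms (the latter via the conflations $A\rightarrowtail A'\twoheadrightarrow 0$ with $A\to A'$ an isomorphism), and if $i\colon A\to B$ is an inflation of $\C$ with $A,B\in\D$ and $\Coker i\in\D$ then, $\D$ being full, $i$ is also a kernel in $\D$; so the proposed distinguished class consists of kernels of $\D$. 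Axiom [R0] is then immediate, since the conflation $0\rightarrowtail 0\twoheadrightarrow 0$ has all terms in $\D$.

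For [R2], start from an inflation $i\colon A\rightarrowtail B$ of $\D$ sitting in a conflation $A\rightarrowtail B\twoheadrightarrow A'$ with $A'\in\D$, and a morphism $f\colon A\to A''$ of $\D$. By [R2] for $\C$, the pushout of $i$ along $f$ exists in $\C$ and produces an inflation $i'\colon A''\rightarrowtail B'$; by Lemma~\ref{l:RW} a cokernel of $i'$ is the same object $A'$, so one obtains a conflation $A''\rightarrowtail B'\twoheadrightarrow A'$ of $\C$. Since $A''$ and $A'$ lie in $\D$ and $\D$ is extension closed, $B'\in\D$, so all terms of this conflation are in $\D$ and $i'$ is an inflation of $\D$. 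Finally, fullness of $\D$ together with $B'\in\D$ shows that the universal property of this pushout in $\C$ restricts to $\D$, so it is also the pushout in $\D$. This yields [R2].

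The heart of the matter is [R1]. Let $i\colon A\rightarrowtail B$ and $j\colon B\rightarrowtail C$ be inflations of $\D$, lying in conflations $A\rightarrowtail B\xrightarrow{d}A'$ and $B\rightarrowtail C\twoheadrightarrow B'$ with $A',B'\in\D$. By [R1] for $\C$, $ji$ is an inflation of $\C$; let $q\colon C\twoheadrightarrow C'$ be its cokernel. Everything reduces to showing $C'\in\D$, and for this I will exhibit a conflation $A'\rightarrowtail C'\twoheadrightarrow B'$ of $\C$. The idea is to push out $j\colon B\rightarrowtail C$ along the deflation $d\colon B\twoheadrightarrow A'$: by [R2] this pushout exists, giving an inflation $A'\rightarrowtail P$, and by Lemma~\ref{l:RW} it sits in a conflation $A'\rightarrowtail P\twoheadrightarrow B'$. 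A short diagram chase then identifies $P$ with $C'$: the map $A'\to C'$ through which $qj$ factors (since $q$ kills $ji$ and $d$ is the cokernel of $i$), together with $q$, forms a cocone over the pushout, yielding a map $P\to C'$; conversely the structure map $C\to P$ kills $ji$ and so factors through $q$, yielding a map $C'\to P$; that these two maps are mutually inverse follows because $d$ and $q$ are epimorphisms and from the universal property of the pushout $P$. Hence $A'\rightarrowtail C'\twoheadrightarrow B'$ is a conflation of $\C$, and since $A',B'\in\D$ and $\D$ is extension closed, $C'\in\D$. Consequently the conflation $A\rightarrowtail C\twoheadrightarrow C'$ has all terms in $\D$, so $ji$ is an inflation of $\D$, which establishes [R1].

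The only genuinely delicate point is this last construction — producing, inside a right exact category and without the obscure axiom, the conflation relating the three cokernels attached to a composite of inflations — but it is handled by a single pushout together with one application of Lemma~\ref{l:RW}. Once that is in place, extension-closedness of $\D$ takes care of all the remaining bookkeeping, and the three axioms hold, so $\E$ defines a right exact structure on $\D$.
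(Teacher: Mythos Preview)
Your argument is correct. The paper itself does not prove this proposition but simply cites \cite[Proposition~2.4.2]{Rosenberg}, so your direct verification of [R0], [R1], [R2] is more self-contained than what appears in the paper. Your handling of [R1] --- pushing the inflation $j\colon B\rightarrowtail C$ out along the deflation $d=\Coker(i)$ and then identifying the resulting object with $\Coker(ji)$ via the two universal properties --- is exactly the right manoeuvre and, as you note, uses only axiom [R2] for $\C$ together with Lemma~\ref{l:RW}; the mutual inverse check you sketch goes through because $d$ and $q$ are epimorphisms and because a map out of a pushout is determined by its compositions with the two structure maps.

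One small caveat: you assert that ``$\D$ is additive'' among your preliminary observations, but this is not justified. In a bare right exact category the split sequence $A\rightarrowtail A\oplus B\twoheadrightarrow B$ need not be a conflation (this requires axiom [R0$*$]; see Proposition~\ref{p:axiom1/2}), so extension-closedness alone does not force $\D$ to contain $0$ or to be closed under finite direct sums, and your argument for closure under isomorphisms already presupposes $0\in\D$. This is really a mild imprecision in the \emph{statement} rather than a gap in your proof --- the analogous result for exact categories, \cite[Lemma~10.20]{Buhler}, explicitly takes $\D$ to be a full additive subcategory --- and once that hypothesis is added everything you wrote goes through cleanly.
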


\begin{proof} See \cite[Proposition~2.4.2]{Rosenberg}. 
\end{proof}

We continue with a result which shows how one-sided exact structures may be transferred between categories,
and will be useful for giving examples of one-sided exact structure which are not exact.

\begin{proposition} \label{p:induced} Let $\mathcal{C}$ be a (strongly) right exact category, $\mathcal{D}$ a right
quasi-abelian category, and $L:\mathcal{D}\to \mathcal{C}$ an additive functor which preserves cokernels. Then there is
an induced (strongly) right exact structure on $\mathcal{D}$ defined by the property that a kernel $j$ in $\mathcal{D}$
is an inflation if and only if $L(j)$ is an inflation in $\mathcal{C}$. 
\end{proposition}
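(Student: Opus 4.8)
The plan is to verify directly that the class $\I$ of all kernels $j$ in $\D$ for which $L(j)$ is an inflation in $\C$ satisfies axioms [R0], [R1] and [R2], and, in the strongly right exact case, also [R3]. Two preliminary observations will carry most of the weight. First, being right quasi-abelian, $\D$ is pre-abelian, so it has all pushouts, and by Example~\ref{e:examples}(2) it carries a strongly right exact structure whose conflations are all kernel--cokernel pairs; since every kernel in a pre-abelian category is the kernel of its own cokernel, the inflations of that structure are precisely the kernels of $\D$. Hence axioms [R0]--[R2] and [R3] are already available for the property ``$j$ is a kernel in $\D$''. Second, $L$ is additive and preserves cokernels, hence preserves finite direct sums and therefore pushouts; consequently $L$ sends a pushout square in $\D$ to a pushout square in $\C$ whenever the corresponding pushout exists in $\C$, and if a morphism $i$ has a cokernel in $\D$ then $L(i)$ has a cokernel in $\C$.

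Granting these, each axiom is obtained by transporting it along $L$. For [R0], $1_0\colon 0\to 0$ is a kernel in $\D$ and $L(1_0)=1_0$ is an inflation in $\C$ by [R0] in $\C$. For [R1], if $i,j\in\I$ are composable, then $ji$ is a kernel in $\D$, being a composite of inflations for the structure of Example~\ref{e:examples}(2), while $L(ji)=L(j)L(i)$ is an inflation in $\C$ by [R1] in $\C$; hence $ji\in\I$. For [R2], let $j\colon A\rightarrowtail B$ lie in $\I$ and let $f\colon A\to A'$ be arbitrary; the pushout $j'\colon A'\to P$ of $j$ along $f$ exists in $\D$ and is a kernel there since $\D$ is right quasi-abelian. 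By [R2] in $\C$ the pushout of the inflation $L(j)$ along $L(f)$ exists, and by the second observation applying $L$ to the pushout square in $\D$ produces this pushout square, so that $L(j')$ is an inflation in $\C$ by [R2] in $\C$; thus $j'\in\I$. In the strongly right exact case, suppose $i\colon A\to B$ has a cokernel in $\D$ and $pi\in\I$ for some $p\colon B\to C$. Since $pi$ is a kernel in $\D$, axiom [R3] for the structure of Example~\ref{e:examples}(2) yields that $i$ is a kernel in $\D$; moreover $L(i)$ has a cokernel in $\C$ and $L(pi)=L(p)L(i)$ is an inflation in $\C$, so $L(i)$ is an inflation in $\C$ by [R3] in $\C$ (available since $\C$ is now strongly right exact); hence $i\in\I$.

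The only step I expect to require genuine care is [R2]: one must check that ``$L$ additive and cokernel-preserving'' really forces the image under $L$ of a pushout square to be a pushout square — this follows by writing the pushout of $j$ along $f$ as the cokernel of $\left[\begin{smallmatrix} j \\ -f \end{smallmatrix}\right]\colon A\to B\oplus A'$ and noting that $L$ commutes with this construction — and also that the pushout one needs in $\C$ does exist, which is exactly what [R2] in $\C$ supplies for the inflation $L(j)$. All the remaining axioms reduce to applying $L$ and invoking the corresponding axiom in $\C$, together with the fact, furnished by Example~\ref{e:examples}(2), that the kernels of $\D$ already constitute a strongly right exact structure.
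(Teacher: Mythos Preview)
Your proof is correct and follows essentially the same approach as the paper's: both verify [R0]--[R3] by showing that the candidate inflation is a kernel in $\D$ and that its image under $L$ is an inflation in $\C$, using that $L$ preserves pushouts because it preserves cokernels and finite direct sums. The only cosmetic difference is packaging: you invoke Example~\ref{e:examples}(2) to say once and for all that the kernels of $\D$ already satisfy [R0]--[R3], whereas the paper cites \cite[Proposition~2]{Rump1} at the individual steps [R1$'$] and [R3$'$]; these are the same fact.
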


\begin{proof} We denote by [R0]-[R2] ([R0]-[R3]) the axioms of the (strongly) right exact category $\mathcal{C}$.
In order to show that the category $\mathcal{D}$ is (strongly) right exact with respect to the defined structure, we
check the corresponding axioms [R0$'$]-[R2$'$] ([R0$'$]-[R3$'$]). 

[R0$'$] Consider the object $0$ of $\mathcal{D}$. Then $1_0$ is the kernel of the morphism $0\to 0$ and
$L(1_0)=1_{L(0)}$ is an inflation in $\mathcal{C}$ by Lemma \ref{l:basic}, and so $1_0$ is an inflation in
$\mathcal{D}$.
 
[R1$'$] Let $j:K\to M$ and $j':M\to N$ be two inflations in $\mathcal{D}$. Then $j'j$ is a kernel by
\cite[Proposition~2]{Rump1}. Also, $L(j):L(K)\to L(M)$ and $L(j'):L(M)\to L(N)$ are inflations in $\mathcal{C}$, and so
$L(j'j)=L(j')L(j)$ is an inflation in $\mathcal{C}$ by [R1]. Hence $j'j$ is an inflation in $\mathcal{D}$. 

[R2$'$] Let $j:K\to M$ and $\alpha:K\to K'$ be morphisms in $\mathcal{D}$ with $j$ an inflation. Let $\left
[\begin{smallmatrix} \beta & j' \end{smallmatrix}\right ]:M\oplus K'\to M'$ be the cokernel of $\left
[\begin{smallmatrix} j \\ -\alpha \end{smallmatrix}\right ]:K\to M\oplus K'$ in $\mathcal{D}$. Then the left diagram
\[\SelectTips{cm}{}
\xymatrix{
K \ar[d]_{\alpha} \ar@{>->}[r]^j & M \ar[d]^{\beta}  \\ 
K' \ar[r]^{j'} & M'  
}  
\hspace{2cm}
\SelectTips{cm}{}
\xymatrix{
L(K) \ar[d]_{L(\alpha)} \ar@{>->}[r]^{L(j)} & L(M) \ar[d]^{L(\beta)}  \\ 
L(K') \ar[r]^{L(j')} & L(M')  
}\]  
is a pushout in $\mathcal{D}$. Since $L$ preserves cokernels, it also preserves pushouts, hence the right diagram is a
pushout in $\mathcal{C}$. Then $j'$ is a kernel because $\mathcal{D}$ is right quasi-abelian. Also, $L(j')$ is an
inflation in $\mathcal{C}$ by [R2]. Hence $j'$ is an inflation in $\mathcal{D}$.

[R3$'$] Let $j:K\to M$ and $p:M\to N$ be morphisms in $\mathcal{D}$ such that $j$ has a cokernel and $pj$ is an
inflation. Then $j$ is a kernel by \cite[Proposition~2]{Rump1}. Also, $L(j):L(K)\to L(M)$ and $L(p):L(M)\to L(N)$ are
morphisms in $\mathcal{C}$ such that $L(j)$ has a cokernel and $L(p)L(j)=L(pj)$ is an inflation in $\mathcal{C}$. Now
[R3] implies that $L(j)$ is an inflation in $\mathcal{C}$, so $j$ is an inflation in $\mathcal{D}$.
\end{proof}

In what follows we shall present some situations when the hypotheses of Proposition \ref{p:induced} and its dual
hold. But first we give the following result, which shows that strongly right exact structures are already exact for a
large class of categories, namely for left quasi-abelian categories, and in particular for abelian categories.

\begin{proposition} \label{p:quasi-ab} If $\C$ is a left quasi-abelian category, then any strongly right exact structure
on $\C$ is exact. 
\end{proposition}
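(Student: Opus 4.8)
The plan is to show that a strongly right exact structure on a left quasi-abelian category $\C$ satisfies the dual axioms [R0$^{\rm op}$], [R1$^{\rm op}$], [R2$^{\rm op}$] as well, since then $\C$ is both left and right exact, i.e.\ exact. By Remark~\ref{rem:exact}(1), it actually suffices to verify that the class of deflations is closed under composition and that the pullback of a deflation along an arbitrary morphism exists and is again a deflation; equivalently, one checks the four axioms [R0], [R1], [R2], [R2$^{\rm op}$] of that remark. Axioms [R0] and [R2] hold by hypothesis, so the real work is [R1$^{\rm op}$] (composition of deflations is a deflation) and [R2$^{\rm op}$] (pullback of a deflation along any morphism exists and is a deflation).

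For [R2$^{\rm op}$], start with a deflation $d\colon B\twoheadrightarrow C$ with conflation $A\rightarrowtail B\twoheadrightarrow C$, and an arbitrary morphism $f\colon C'\to C$. Since $\C$ is pre-abelian, the pullback $P$ of $d$ and $f$ exists. Because $d$ is a cokernel of the inflation $A\rightarrowtail B$, hence an epimorphism, a standard diagram chase shows that the induced map $P\to C'$ is again an epimorphism and that its kernel is (isomorphic to) $A$, giving a commutative diagram with rows $A\rightarrowtail P\to C'$ and $A\rightarrowtail B\twoheadrightarrow C$. The key point is to recognize the left square $A\rightarrowtail P\to B$ as a pushout: here I would invoke Lemma~\ref{l:po} (with the roles arranged so that the map labelled $d$ there is the epimorphism $P\to C'$, which has cokernel-of-something structure coming from $A\rightarrowtail B$ being a kernel). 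Once the left square is a pushout of the inflation $A\rightarrowtail B$ along $A\to P$, axiom [R2] makes $A\rightarrowtail P$ an inflation, so its cokernel $P\to C'$ is a deflation; and since pullbacks along $d$ are unique up to isomorphism, this is the desired pullback.

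For [R1$^{\rm op}$], let $d\colon B\twoheadrightarrow C$ and $e\colon C\twoheadrightarrow D$ be deflations; I must show $ed$ is a deflation, i.e.\ that $ed$ is the cokernel of some inflation. Form the pullback of $e$ along $d$; by [R2$^{\rm op}$] just established, the pullback projection $Q\to B$ is a deflation, say with kernel inflation $A'\rightarrowtail Q$, and the other projection $Q\twoheadrightarrow C$ composed appropriately realizes the relevant square. Combining the kernel of $d$ (call it $A\rightarrowtail B$) with $A'\rightarrowtail Q$ via [R1] produces an inflation $\iota\colon A'\rightarrowtail B$ obtained as a composite, and a routine check (using that $ed$ is an epimorphism, being a composite of epimorphisms, and that $\C$ is pre-abelian so $ed$ has a kernel) identifies $\iota$ with a kernel of $ed$; then axiom [R3] — which is available because $\C$ is \emph{strongly} right exact and $\iota$, being a kernel, trivially has a cokernel — upgrades $\iota$ to an inflation if needed, and its cokernel is $ed$. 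Here is where the ``strong'' hypothesis is genuinely used.

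The main obstacle I anticipate is the bookkeeping in [R1$^{\rm op}$]: correctly identifying the composite inflation $A'\rightarrowtail Q\rightarrowtail B$ (or the appropriate variant) as a kernel of $ed$, and making sure the pullback square produced by [R2$^{\rm op}$] is oriented so that Lemma~\ref{l:po} and axiom [R3] apply cleanly. The left quasi-abelian hypothesis is what guarantees all the requisite kernels and cokernels exist (pre-abelian) and, more importantly, that cokernels are stable under pullbacks, which is implicitly what keeps the kernel objects from degenerating when we pull back; without it the diagram chases identifying $\Ker(P\to C')\cong A$ would fail. Everything else is diagram chasing of the type already carried out in the proof of Proposition~\ref{p:induced}.
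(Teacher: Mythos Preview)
Your argument for [R2$^{\rm op}$] has a genuine gap. In the diagram
\[
\xymatrix{
A \ar@{=}[d] \ar[r]^{i'} & P \ar[d]^g \ar[r]^{d'} & C' \ar[d]^h \\
A \ar@{>->}[r]^{i} & B \ar@{->>}[r]^{d} & C
}
\]
the left square has $1_A$ on the vertical, so the pushout of $i$ along that map is just $i$ itself; Lemma~\ref{l:po} cannot turn this square into a pushout that produces $i'$ from $i$ via [R2]. The step that actually works here is [R3]: since $\C$ is pre-abelian, $i'$ has a cokernel, and $gi'=i$ is an inflation, so axiom [R3] forces $i'$ to be an inflation. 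This is exactly the point where the ``strongly'' hypothesis enters --- in the verification of [R2$^{\rm op}$], not in [R1$^{\rm op}$] as you suggest. After that, the left quasi-abelian hypothesis is used to see that $d'$, being the pullback of the cokernel $d$, is itself a cokernel; hence $(i',d')$ is a kernel--cokernel pair and $d'=\Coker(i')$ is a deflation. (The identification $\Ker(d')\cong A$ needs only the dual of Lemma~\ref{l:RW}, not left quasi-abelianness.)

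Your treatment of [R1$^{\rm op}$] is unnecessary: you yourself invoke Remark~\ref{rem:exact}(1), which says that [R0], [R1], [R2] and [R2$^{\rm op}$] together already yield an exact structure. Once [R2$^{\rm op}$] is established, [R1$^{\rm op}$] follows for free, so the second half of your proposal can be dropped.
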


\begin{proof} Assume that $\C$ is a left quasi-abelian strongly right exact category. We have recalled in Remark
\ref{rem:exact} that, in order to prove that $\C$ is exact, it is enough to show [R2$^{\rm op}$]. So let $d:B\to C$ be a
deflation with corresponding inflation $i:A\to B$. Consider the pullback of $d$ along an arbitrary morphism $h:C'\to C$.
By the dual of Lemma \ref{l:RW}, we have a commutative diagram
\[\SelectTips{cm}{}
\xymatrix{
A \ar@{=}[d] \ar[r]^{i'} & B' \ar[d]^g \ar[r]^{d'} & C' \ar[d]^h \\ 
A \ar[r]^i & B \ar[r]^d & C  
}\] 
in which the right square is a pullback and $i':A\to B'$ is a kernel of $d'$. Since $i'$ has a cokernel and $i$ is an
inflation, so is $i'$ by [R3]. By Example \ref{e:examples} (2), the class of all short exact sequences defines a left
exact structure in the left quasi-abelian category $\C$, hence $d'$ is a cokernel. Now by \cite[Proposition~3.4, Chapter
IV]{Sten} we must have $d'=\Coker(i')$, and so $d'$ is a deflation.  
\end{proof}

\begin{proposition} \label{p:sre} Let $\D$ be a right quasi-abelian full subcategory of a right quasi-abelian
category $\C$ such that the inclusion functor $i:\D\to\C$ preserves cokernels. Denote by $\E$ the class of short
exact sequences in $\C$ with terms in $\D$. Then $\E$ gives rise to a strongly right exact structure on $\D$, which is
exact if and only if $\D$ is left quasi-abelian. 
\end{proposition}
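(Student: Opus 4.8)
The plan is to realize $\E$ as an instance of Proposition~\ref{p:induced} and then to read exactness off from Proposition~\ref{p:quasi-ab}. By Example~\ref{e:examples}~(2) the right quasi-abelian category $\C$ carries the strongly right exact structure whose conflations are all the kernel-cokernel pairs of $\C$; there the inflations are precisely the kernels of $\C$. The inclusion $i\colon\D\to\C$ is additive and preserves cokernels, and $\D$ is right quasi-abelian, so Proposition~\ref{p:induced} applies with $L=i$ and produces a strongly right exact structure on $\D$ in which a kernel $j$ of $\D$ is an inflation if and only if $i(j)$ is a kernel in $\C$.

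First I would check that the conflations of this induced structure are exactly the members of $\E$. If $A\rightarrowtail B\twoheadrightarrow C$ is a conflation of the induced structure, then $A\to B$ is a kernel both in $\D$ and in $\C$, and $B\to C$ is its cokernel in $\D$, hence also in $\C$ since $i$ preserves cokernels; being the kernel of its cokernel, $A\to B$ equals $\Ker_\C(B\to C)$, so $A\to B\to C$ is a short exact sequence in $\C$ with terms in $\D$. Conversely, if $A\to B\to C$ is a short exact sequence in $\C$ with terms in $\D$, then $A\to B$ is a kernel in $\C$, hence a kernel in $\D$ because $\D$ is full, so an inflation of the induced structure, and its cokernel in $\D$ coincides with its cokernel in $\C$, namely $B\to C$. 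Thus $\E$ defines the asserted strongly right exact structure on $\D$.

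One implication of the equivalence is then immediate: if $\D$ is left quasi-abelian, then $(\D,\E)$ is a strongly right exact category with left quasi-abelian underlying category, so $\E$ is exact by Proposition~\ref{p:quasi-ab}. For the converse, suppose $\E$ is exact. I would first establish that the deflations of $\E$ are exactly the cokernels of $\D$ --- equivalently, that every kernel of $\D$ is a kernel of $\C$, so that $\E$ consists of all kernel-cokernel pairs of $\D$. Granting this, exactness of $\E$ supplies axiom [R2$^{\rm op}$]: the pullback in $\D$ of any deflation of $\E$ along an arbitrary morphism exists and is again a deflation of $\E$, hence a cokernel in $\D$. Since every cokernel of $\D$ is such a deflation, cokernels in $\D$ are stable under pullback, i.e.\ $\D$ is left quasi-abelian, completing the proof.

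The step I expect to be the main obstacle is precisely this identification of the deflations of $\E$ with all cokernels of $\D$, that is, showing that a kernel $k\colon A\to B$ in $\D$, with cokernel $q=\Coker_\D(k)\colon B\to C$, is a kernel in $\C$. Here $q=\Coker_\C(k)$ because $i$ preserves cokernels, and $k=\Ker_\D(q)$; in $\C$ the morphism $k$ factors as an epimorphism $A\to\Im_\C(k)$ followed by $\Im_\C(k)=\Ker_\C(q)\rightarrowtail B$, the first factor being an epimorphism because $\C$ is right quasi-abelian. The point is to show that this epimorphism is invertible, which is where the hypotheses that $i$ preserves cokernels and that both $\D$ and $\C$ are right quasi-abelian have to be used together. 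As a concrete test of the whole statement one may take $\C$ to be the category of abelian groups and $\D$ the category of divisible abelian groups, which is right but not left quasi-abelian and for which $\E$ is indeed the class of all kernel-cokernel pairs of $\D$.
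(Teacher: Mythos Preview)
Your approach is exactly the paper's: endow $\C$ with the maximal strongly right exact structure of Example~\ref{e:examples}~(2), apply Proposition~\ref{p:induced} to the inclusion, identify the resulting conflations with $\E$, and then handle the equivalence via Proposition~\ref{p:quasi-ab} for one direction and axiom [R2$^{\rm op}$] for the other. Your verification that the induced conflations coincide with $\E$ is more careful than the paper's one-line assertion; note only that ``$A\to B$ is a kernel in $\C$, hence a kernel in $\D$ because $\D$ is full'' needs the extra observation that $A\to B$ equals $\Ker_\C$ of its own cokernel, and that this cokernel lies in $\D$ because $i$ preserves cokernels.

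The obstacle you isolate in the converse is genuine, and the paper does not address it either: its entire argument for ``$\E$ exact $\Rightarrow$ $\D$ left quasi-abelian'' is the single sentence ``If the induced structure is also left exact, then cokernels in $\D$ are stable under pullbacks, that is, $\D$ is also left quasi-abelian.'' This tacitly assumes that every cokernel of $\D$ is already a deflation of $\E$, i.e.\ that every kernel of $\D$ is a kernel of $\C$ --- precisely the point you flag. In the paper's actual applications (Corollary~\ref{c:pretorsion} and Examples~\ref{e:pre-torsion}, \ref{e:isbell}) the ambient category $\C$ is abelian, so every monomorphism of $\C$ is a kernel and the difficulty disappears; your test case $\D=$ divisible abelian groups in $\C=\Ab$ confirms this. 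For a general right quasi-abelian $\C$ the step is not obvious from the stated hypotheses, and you will not find the missing argument in the paper's proof.
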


\begin{proof} We start with the strongly right exact structure on $\C$ given by all short exact sequences 
(see Example \ref{e:examples} (2)). By Proposition~\ref{p:induced}, $\D$ inherits from $\C$ a strongly right exact
structure with inflations the kernels $f:A\to B$ with $A$ and $B$ objects in $\D$; thus the conflations in $\D$ are
precisely the short exact sequences in the class $\E$. 

If the induced structure is also left exact, then cokernels in $\D$ are stable under pullbacks, that is, $\D$ is
also left quasi-abelian. Conversely, if $\D$ is left quasi-abelian, then by Proposition \ref{p:quasi-ab}, $\E$ induces
an exact structure on $\D$.
\end{proof}

\begin{corollary}\label{c:pretorsion} Let $\C$ be a right quasi-abelian category and let $\D$ be a coreflective
full subcategory of $\C$. Denote by $\E$ the class of short exact sequences in $\C$ with terms in $\D$. Then $\E$ gives
rise to a strongly right exact structure on $\D$, which is exact if and only if $\D$ is left quasi-abelian. 
\end{corollary}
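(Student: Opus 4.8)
The plan is to deduce Corollary \ref{c:pretorsion} from Proposition \ref{p:sre}. To invoke that proposition we must verify two things about the coreflective full subcategory $\D$ of the right quasi-abelian category $\C$: first, that $\D$ is itself right quasi-abelian; second, that the inclusion functor $i:\D\to\C$ preserves cokernels. Once these are in hand, the statement ``$\E$ gives rise to a strongly right exact structure on $\D$, which is exact if and only if $\D$ is left quasi-abelian'' is exactly the conclusion of Proposition \ref{p:sre}, and nothing further is needed.

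First I would exploit coreflectivity. Being coreflective means the inclusion $i:\D\to\C$ has a right adjoint, say $R:\C\to\D$; in particular $i$ is a left adjoint, hence $i$ preserves all colimits that exist in $\D$, and in particular cokernels. This immediately gives the second hypothesis of Proposition \ref{p:sre}. It also tells us that $\D$ has cokernels, computed as in $\C$. For kernels, the standard fact about coreflective subcategories is that the kernel in $\D$ of a morphism $f:A\to B$ with $A,B\in\D$ is obtained by applying $R$ to the kernel taken in $\C$; since $\C$ is pre-abelian, such kernels exist, so $\D$ is pre-abelian. (Dually to the remarks in the Preliminaries on pretorsion classes being coreflective, this is the expected behaviour.)

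Next I would check that $\D$ is right quasi-abelian, i.e. that kernels in $\D$ are stable under pushouts along arbitrary morphisms of $\D$. Let $j:K\to M$ be a kernel in $\D$ and $\alpha:K\to K'$ a morphism in $\D$; form the pushout in $\D$, which — since $i$ preserves cokernels and pushouts are colimits built from coproducts and cokernels — is computed as the pushout in $\C$, with all four objects lying in $\D$. Now $j$ need not be a kernel in $\C$, only in $\D$; here is where the main obstacle lies. The cleanest route is to observe that, because $\D$ is coreflective and $i$ preserves cokernels, a morphism with both terms in $\D$ is a kernel in $\D$ if and only if it is a kernel in $\C$: indeed, by the dual of Lemma \ref{l:RW} (or directly) a kernel in a pre-abelian category is recovered as the kernel of its own cokernel, and this cokernel is computed the same way in $\D$ and in $\C$. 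Hence $j$ is also a kernel in $\C$; its pushout in $\C$ along $\alpha$ is a kernel in $\C$ because $\C$ is right quasi-abelian; and that pushout, having both terms in $\D$, is therefore a kernel in $\D$. Thus $\D$ is right quasi-abelian.

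Having established that $\D$ is right quasi-abelian and that $i:\D\to\C$ preserves cokernels, Proposition \ref{p:sre} applies verbatim and yields the conclusion. The delicate point to write out carefully is the identification of kernels in $\D$ with kernels in $\C$ (for morphisms with terms in $\D$), since this is what transports the pushout-stability of kernels from $\C$ down to $\D$; everything else is a direct appeal to the already-proved results.
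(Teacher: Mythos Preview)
Your overall strategy coincides with the paper's: verify that the coreflective subcategory $\D$ is right quasi-abelian and that the inclusion preserves cokernels, then invoke Proposition~\ref{p:sre}. The paper, like you, notes that $i$ is a left adjoint and hence preserves colimits (so cokernels in $\D$ are computed as in $\C$), that kernels in $\D$ are obtained by applying the coreflector $R$ to kernels in $\C$, and then asserts that ``it is easy to show that kernels are stable under pushouts in $\D$'' without further detail.

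Where you go beyond the paper is in trying to justify this last assertion, and here there is a genuine gap. Your argument for the equivalence ``$j$ is a kernel in $\D$ $\Leftrightarrow$ $j$ is a kernel in $\C$'' only establishes one direction. From ``$j$ is a kernel in $\D$'' you correctly obtain $j=\ker_{\D}(\mathrm{coker}_{\D}\,j)=\ker_{\D}(\mathrm{coker}_{\C}\,j)$, using that cokernels agree. But to conclude that $j=\ker_{\C}(\mathrm{coker}_{\C}\,j)$ you would need $\ker_{\D}$ and $\ker_{\C}$ to agree on this morphism, and that is precisely the point at issue: the $\D$-kernel of a morphism $c$ with terms in $\D$ is $R(\ker_{\C} c)$, and nothing you have said forces $\ker_{\C} c$ to already lie in $\D$. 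The implication you actually need for your pushout argument is this unproved forward one; the reverse implication (a $\C$-kernel with terms in $\D$ is a $\D$-kernel) is correct and you use it at the end, but it does not help here. So the step ``hence $j$ is also a kernel in $\C$'' is not justified, and with it the transport of pushout-stability from $\C$ down to $\D$ collapses. You should either supply a direct argument in $\D$ that the pushout $j'$ satisfies $j'=\ker_{\D}(\mathrm{coker}_{\D}\,j')$, or find an independent reason why $\ker_{\C}(\mathrm{coker}_{\C}\,j)$ lies in $\D$ in the coreflective situation.
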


\begin{proof} Since $\D$ is a coreflective full subcategory of $\C$, the inclusion functor $i:\D\to \C$ has a right
adjoint, say $b:\C\to \D$, and we have $ib\cong 1_{\C}$. Then $\D$ is pre-abelian (e.g., by dual results of
\cite[Chapter~X, \S~1]{Sten}, which are also valid in our context). More precisely, if $g:B\to C$ is a morphism in $\C$
with terms in $\D$ and kernel $f:A\to B$ in $\C$, then the restriction $f':b(A)\to B$ of $f$ to $b(A)$ is the kernel of
$f$ in $\D$. Also, the cokernels in $\D$ coincide with the cokernels in $\C$. It is easy to show that kernels
are stable under pushouts in $\D$. Thus $\D$ is right quasi-abelian. Since $i$ preserves cokernels, the conclusion
follows now by Proposition \ref{p:sre}.
\end{proof}

Note that Corollary \ref{c:pretorsion} has a dual that uses reflective subcategories. We now exhibit explicit examples
of coreflective and reflective subcategories which are strongly right or left exact, but not exact.

\begin{example} \label{e:pre-torsion} \rm Let $\Ab$ be the category of abelian groups and let $H$ be the subgroup 
of the group of rational numbers generated by the elements $1/p$, where $p$ varies in the set $P$ of prime numbers.
Consider the idempotent preradical $r:{\rm Ab}\to {\rm Ab}$ given by the trace of $H$ (recall that the trace of $H$ in
an abelian group $G$ is the sum of all images of morphisms from $H$ to $G$). Let $\C$ be the pretorsion class
corresponding to $r$, that is, the class of objects $C$ of Ab such that $r(C)=C$. 

We claim that $\C$ admits a strongly right exact structure which is not left exact. Moreover, we shall show that the
strongly right exact structure may be quite far from being left exact; more precisely, none of the axioms [R1$^{\rm
op}$], [R2$^{\rm op}$], [R3$^{\rm op}$] holds for $\C$. 

First note that since $\C$ is a pretorsion class, it is a coreflective full subcategory of Ab. Then by
Corollary~\ref{c:pretorsion} the monomorphisms $0\to A\to B$ in $\Ab$ with $A$ and $B$ in $\C$ give rise to a strongly
right exact structure in $\C$. A cokernel in $\C$ is a deflation if and only if its kernel in $\Ab$ belongs to $\C$. Let
$P$ be the set of all primes. Note that we have $H/p^n\mathbb{Z}\cong \mathbb{Z}(p^n)$ for every $p\in P$ and every
non-zero natural number $n$. Then it follows that $\C$ contains all torsion groups (in the usual sense) and all
divisible groups. For every $p\in P$, let $\Z(p)$ and $\Z(p^{\infty})$ denote the cyclic group of order $p$ and the
divisible Pr\"ufer $p$-group respectively. It easy to see that $\frac{\prod _{p\in P}\Z(p)}{\bigoplus_{p\in
P}\Z(p)}$ is a divisible torsion free group, and that for every group $G$ with $\bigoplus_{p\in P}\Z(p)\subsetneq G\leq
\prod _{p\in P}\Z(p)$ we have $r(G)=\bigoplus_{p\in P}\Z(p)$. Note that it is enough that one checks this equality for
$G=\prod _{p\in P}\Z(p)$. 


The canonical projection \[\pi\colon \prod _{p\in P}\Z(p^{\infty})\to\frac{\prod _{p\in
P}\Z(p^{\infty})}{\bigoplus_{p\in P}\Z(p)}\] is a deflation, as well as the projection \[\rho\colon \frac{\prod _{p\in
P}\Z(p^{\infty})}{\bigoplus_{p\in P}\Z(p)}\to \frac{\prod _{p\in P}\Z(p^{\infty})}{\prod_{p\in P}\Z(p)},\] but the
kernel of their composition is $\prod_{p\in P}\Z(p)$, which is not an object of $\C$. So [R1$^{\rm op}$] does not hold.

Let $j\colon \Q \to \frac{\prod _{p\in P}\Z(p^{\infty})}{\bigoplus_{p\in P}\Z(p)}$ be the inclusion and consider the
pullback of $\pi$ along $i$ in $\Ab$, that is:
 \[\SelectTips{cm}{}
\xymatrix{
0\ar[r]& \bigoplus_{p\in P}\Z(p)\ar@{=}[d] \ar[r]&Y\ar[d] \ar[r]& \Q\ar[r] \ar[d]^j &0 \\ 
0\ar[r]& \bigoplus_{p\in P}\Z(p)\ar[r]&\prod _{p\in P}\Z(p^{\infty})\ar[r]^{\pi} & \frac{\prod _{p\in
P}\Z(p^{\infty})}{\bigoplus_{p\in P}\Z(p)}\ar[r]&0
}\]  
We have $r(Y)=\bigoplus_{p\in P}\Z(p)$, and so
\[\SelectTips{cm}{}
\xymatrix{
\bigoplus_{p\in P}\Z(p)\ar[d]\ar[r]^0 & \Q\ar[d]^j  \\ 
\prod _{p\in P}\Z(p^{\infty})\ar[r]^{\pi} & \frac{\prod _{p\in P}\Z(p^{\infty})}{\bigoplus_{p\in P}\Z(p)}}\]
is a pullback in $\C$. Note that the lower morphism is a deflation, but the upper morphism is not, so  [R2$^{\rm op}$]
does not hold.

Also, the deflation \[[\pi \ 0]\colon \prod _{p\in P}\Z(p^{\infty})\bigoplus \Q\to \frac{\prod _{p\in
P}\Z(p^{\infty})}{\oplus_{p\in P}\Z(p)}\] is the composition of the morphisms 
\[\SelectTips{cm}{}
\xymatrix{
\prod _{p\in P}\Z(p^{\infty})\oplus \Q \ar[r]^{\left [\begin{smallmatrix} 1&0\\0&0 \end{smallmatrix}\right ]} & \prod
_{p\in P}\Z(p^{\infty})\oplus \Q \ar[r]^{\hspace{5mm}[\pi\  j]} & \frac{\prod _{p\in
P}\Z(p^{\infty})}{\bigoplus_{p\in P}\Z(p)},
}\]  
but $[\pi\ j]$ is not a deflation. This shows that neither the axiom [R3$^{\rm op}$] holds.
\end{example}

\begin{example} \label{e:isbell} 
\rm We adapt and use properties
from \cite{Kelly}. Let $\mathcal{I}$ be the Isbell category, that is, the full subcategory of the
category $\Ab$ of abelian groups consisting of abelian groups having no element of order $p^2$, for a fixed prime number
$p$. $\I$ consists of the abelian groups $G$ whose $p$-primary subgroups are elementary groups.

We claim that $\I$ admits a strongly left exact structure which is not right exact and, similarly to
Example~\ref{e:pre-torsion}, we shall show that none of the axioms [R1], [R2], [R3] holds for $\I$. 

Note that $\I$ is a pretorsion free class. In fact, let $r\colon \Ab\to \Ab$  be the preradical defined by $r(G)=pG_p$,
where $G_p$ is the $p$-primary subgroup of $G$.  It is not difficult to see that $r$ is a radical. As a pretorsion
free class, $\I$ is a reflective full subcategory of Ab. By the dual of Corollary~\ref{c:pretorsion}, the epimorphisms
$B\to C\to 0$ in $\Ab$ with $B$ and $C$ in $\I$ give rise to a strongly left exact structure. Moreover, $\mathcal{I}$ is
complete and cocomplete: limits in $\mathcal{I}$ are formed as in Ab, and colimits are the colimits $G$ in Ab factored
by $pG_p$. 

We shall show that $\I$ is not right exact. A morphism $j$ in $\mathcal{I}$ is an inflation if and only if the cokernel
of $j$ in Ab is a morphism in $\mathcal{I}$. Now $p:\mathbb{Z}\to \mathbb{Z}$ is an inflation in $\mathcal{I}$, but its
composition with itself is not an inflation in $\mathcal{I}$. This shows that axiom [R1] does not hold in $\mathcal{I}$.
Moreover, neither axiom [R2], nor axiom [R3] hold in $\mathcal{I}$. Indeed,  the square
\[\SelectTips{cm}{}
\xymatrix{
\mathbb{Z} \ar[d]_f \ar@{>->}[r]^p & \mathbb{Z} \ar[d]^f  \\ 
\mathbb{Z}(p) \ar[r]^0 & \mathbb{Z}(p) 
}\]  
(with $f\neq 0$ and $\mathbb{Z}(p)$ denoting $\mathbb{Z}/p\mathbb{Z}$) is a pushout and the upper morphism is an
inflation, but the lower morphism is not an inflation. Also, the inflation $\left [\begin{smallmatrix} p\\0
\end{smallmatrix}\right ]:\mathbb{Z}\to \mathbb{Z}\oplus \mathbb{Z}(p)$ is the composition of the morphisms $\left
[\begin{smallmatrix} 1&0\\0&0 \end{smallmatrix}\right ]:\mathbb{Z}\oplus \mathbb{Z}(p)\to \mathbb{Z}\oplus
\mathbb{Z}(p)$ and $\left [\begin{smallmatrix} p \\ \pi \end{smallmatrix}\right ]:\mathbb{Z}\to \mathbb{Z}\oplus
\mathbb{Z}(p)$ (where $\pi:\mathbb{Z}\to \mathbb{Z}(p)$ denotes the canonical projection), but the latter is not an
inflation.
\end{example}

\section{Some homological lemmas}

In this section we study how conflations behave with respect to direct sums and pushouts in right exact
categories, and we employ such properties for proving two important homological results, namely the Short Five Lemma and
the $3\times 3$ Lemma, in a (strongly) right exact category. Our results are inspired by the analogous statements in
\cite{Buhler}, but in general we have to find new proofs, since we use only the one-sided part of the axioms defining an
exact structure. For some results we have to assume axiom [R0$*$] or even [R3].

\begin{lemma} \label{l:ds} Let $\mathcal{C}$ be a right exact category. Then the class of conflations is closed under
isomorphisms and direct sums of short exact sequences.
\end{lemma}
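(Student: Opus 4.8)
The statement claims that in a right exact category $\mathcal{C}$, the class of conflations is closed under (a) isomorphisms of short exact sequences and (b) direct sums. The plan is to reduce everything to the defining axioms [R0]--[R2] together with the basic consequences already recorded in Lemma~\ref{l:basic}, using the description of deflations as pushouts of inflations along the zero map $A \to 0$.

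For the closure under isomorphisms, suppose $A \rightarrowtail B \twoheadrightarrow C$ is a conflation and we are given an isomorphism of diagrams with some sequence $A' \to B' \to C'$, i.e. isomorphisms $\alpha\colon A\to A'$, $\beta\colon B\to B'$, $\gamma\colon C\to C'$ making the two squares commute. I would first note that $i' = \beta i \alpha^{-1}$ is an inflation: by Lemma~\ref{l:basic}(ii) isomorphisms are inflations, and by [R1] a composite of inflations is an inflation, so $\beta i \alpha^{-1}$ is an inflation (after checking $\alpha^{-1}$ is indeed an inflation, again Lemma~\ref{l:basic}(ii)). Then I must verify that $d'$ is a cokernel of $i'$; but $d$ is a cokernel of $i$, and pre- and post-composing a cokernel with isomorphisms again yields a cokernel — this is a purely formal category-theoretic fact. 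Hence $A' \rightarrowtail B' \twoheadrightarrow C'$ is a conflation. The only mild subtlety here is to make sure the given $d'$ agrees with the canonically produced cokernel up to the specified isomorphism, but commutativity of the right square forces this.

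For the closure under direct sums, let $A_1 \overset{i_1}{\rightarrowtail} B_1 \overset{d_1}{\twoheadrightarrow} C_1$ and $A_2 \overset{i_2}{\rightarrowtail} B_2 \overset{d_2}{\twoheadrightarrow} C_2$ be two conflations; I want $A_1\oplus A_2 \overset{i_1\oplus i_2}{\longrightarrow} B_1\oplus B_2 \overset{d_1\oplus d_2}{\longrightarrow} C_1\oplus C_2$ to be a conflation. The key step is to realize $i_1\oplus i_2$ as a composite of two pushouts of the $i_k$, which are inflations by [R2], so that [R1] finishes the job. Concretely, form the pushout of $i_1\colon A_1\rightarrowtail B_1$ along the coprojection $A_1 \to A_1\oplus A_2$; by [R2] and Lemma~\ref{l:RW} this produces an inflation $A_1\oplus A_2 \rightarrowtail B_1\oplus A_2$ which may be identified with $i_1\oplus 1_{A_2}$. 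Similarly the pushout of $i_2$ along $A_2\to A_1\oplus A_2$ (here it is cleaner to push out $1_{B_1}\oplus i_2$ viewed appropriately, i.e. take the pushout of $i_2\colon A_2\rightarrowtail B_2$ along $A_2\to B_1\oplus A_2$) gives an inflation $B_1\oplus A_2 \rightarrowtail B_1\oplus B_2$ identified with $1_{B_1}\oplus i_2$. Composing, [R1] shows $i_1\oplus i_2 = (1_{B_1}\oplus i_2)\circ(i_1\oplus 1_{A_2})$ is an inflation. Finally one checks $d_1\oplus d_2$ is a cokernel of $i_1\oplus i_2$: this follows because a cokernel in an additive category is computed componentwise on a direct sum (the universal property of $\Coker$ splits across the biproduct), so $\Coker(i_1\oplus i_2) = \Coker(i_1)\oplus\Coker(i_2) = C_1\oplus C_2$ via $d_1\oplus d_2$.

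The main obstacle I anticipate is the bookkeeping of identifications in the direct-sum argument: the pushout in [R2] only gives an inflation up to the canonical isomorphism supplied by Lemma~\ref{l:RW}, so I need to be careful that the various intermediate objects ($B_1\oplus A_2$) and the maps really line up to present $i_1\oplus i_2$ as an honest composite of two inflations — rather than merely something isomorphic to such a composite. Once that is handled (and part (a) of the statement, closure under isomorphism, makes this harmless anyway), the verification that $d_1\oplus d_2$ is the cokernel is routine and needs no axioms beyond additivity.
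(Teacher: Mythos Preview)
Your proposal is correct and follows essentially the same approach as the paper, which simply refers to [R2] for closure under isomorphisms and to B\"uhler's Proposition~2.9 (via [R1]) for direct sums---precisely the factorization $i_1\oplus i_2=(1_{B_1}\oplus i_2)(i_1\oplus 1_{A_2})$ into two pushout-inflations that you spell out. The only cosmetic difference is that for isomorphisms the paper invokes [R2] directly (a commutative square with an isomorphism on one side is already a pushout, so [R2] makes the opposite side an inflation), whereas you go via [R1] and Lemma~\ref{l:basic}(ii); both arguments are valid and amount to the same thing.
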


\begin{proof} The first part follows immediately by [R2]. The second part is similar to \cite[Proposition 2.9]{Buhler}
using axiom [R1].
\end{proof}

The next result is similar to a part of \cite[Proposition 3.1]{Buhler}.

\begin{proposition} \label{p:nine} Let $\mathcal{C}$ be a right exact category. Every morphism $(f,g,h)$ between
two conflations $A\stackrel{i}\rightarrowtail B\stackrel{d}\twoheadrightarrow C$ and $A'\stackrel{i'}\rightarrowtail
B'\stackrel{d'}\twoheadrightarrow C'$ factors through some conflation $A'\rightarrowtail D\twoheadrightarrow C$
\[\SelectTips{cm}{}
\xymatrix{
A \ar[d]_f \ar@{>->}[r]^{i} & B \ar[d]^{g'} \ar@{->>}[r]^{d} & C \ar@{=}[d] \\ 
A' \ar@{=}[d] \ar@{>->}[r]^j & D \ar[d]^{g''} \ar@{->>}[r]^p & C \ar[d]^h \\ 
A' \ar@{>->}[r]^{i'} & B' \ar@{->>}[r]^{d'} & C'
}\] 
such that the upper left square and the lower right square of the diagram are pushouts.
\end{proposition}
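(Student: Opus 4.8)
The plan is to build the middle row by a single pushout construction and then verify that the lower right square is also a pushout. First I would form the pushout of the inflation $i\colon A\rightarrowtail B$ along $f\colon A\to A'$; this exists and is again an inflation by axiom [R2], so I obtain $j\colon A'\rightarrowtail D$ together with $g'\colon B\to D$ making the upper left square a pushout. By Lemma~\ref{l:RW} this pushout extends to a commutative diagram with rows $A\rightarrowtail B\twoheadrightarrow C$ and $A'\rightarrowtail D\to C$ in which the induced map $p\colon D\to C$ is a cokernel of $j$; hence $A'\stackrel{j}\rightarrowtail D\stackrel{p}\twoheadrightarrow C$ is a conflation and the upper left square is a pushout, as required. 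This gives the morphism $(f,g',1_C)$ of conflations in the top two rows.

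Next I would produce the map $g''\colon D\to B'$. Since the upper left square is a pushout of $A\to B$ and $A\to A'$, a morphism out of $D$ is the same as a pair of morphisms out of $B$ and $A'$ agreeing on $A$; I take $g\colon B\to B'$ and $i'\colon A'\to B'$, which agree on $A$ because $gi = i'f$ by hypothesis. This yields the unique $g''\colon D\to B'$ with $g''g' = g$ and $g''j = i'$, so the middle square commutes. To see that $d'g'' = hp$, note both sides are morphisms $D\to C'$; precomposing with the (jointly epic) pair $g'$ and $j$ gives $d'g''g' = d'g = hd = hpg'$ and $d'g''j = d'i' = 0 = hp j$ (the latter since $pj=0$), so $d'g'' = hp$ and the lower right square commutes. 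Thus $(1_{A'},g'',h)$ is a morphism of conflations filling in the bottom two rows.

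Finally I would check that the lower right square is a pushout. Here I invoke Lemma~\ref{l:po}: in the commutative diagram with rows $A'\stackrel{j}\rightarrowtail D\stackrel{p}\twoheadrightarrow C$ and $A'\stackrel{i'}\rightarrowtail B'\stackrel{d'}\twoheadrightarrow C'$, the map $p$ is an epimorphism (being a deflation, hence a cokernel) and $d'\colon B'\to C'$ is a cokernel of $i'$, so the right-hand square is a pushout.

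The main obstacle I anticipate is purely bookkeeping: one must be careful that Lemma~\ref{l:po} applies with the identity on the left, which it does since the left vertical in the bottom two rows is $1_{A'}$, and that the hypotheses of Lemma~\ref{l:po} are met — namely that $p$ is epic and $d'$ is genuinely a cokernel of $i'$, both of which are immediate from the definition of conflation in a right exact category. No use of [R0$*$] or [R3] is needed; only [R2] (for the existence of the pushout and that $j$ is an inflation), Lemma~\ref{l:RW}, and Lemma~\ref{l:po} enter the argument.
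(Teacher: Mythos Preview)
Your argument is correct and is essentially the paper's own proof: the paper simply cites B\"uhler's Proposition~3.1 and notes that only axiom~[R2] together with Lemmas~\ref{l:RW} and~\ref{l:po} are needed, which is precisely what you carry out in detail. Your verification that $d'g''=hp$ via the pushout (universal) property of the upper left square and your invocation of Lemma~\ref{l:po} for the lower right square match the intended reasoning exactly.
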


\begin{proof} As the proof of \cite[Proposition 3.1]{Buhler} using axiom [R2] and Lemmas \ref{l:RW} and \ref{l:po}.
\end{proof}

It is known that the Short Five Lemma (see below) holds in any right quasi-abelian category \cite[Lemma~3]{Rump1},
which has the right exact structure given by all short exact sequences. Also, it holds in any exact
category (see \cite[Corollary 3.2]{Buhler}). We shall generalize this homological lemma to an arbitrary right exact
category. The argument used in the case of exact categories cannot be trasferred in our context. Our proof uses an idea
from \cite[Theorem~6]{RW}.

\begin{lemma}[Short Five Lemma] \label{l:five} Let $\mathcal{C}$ be a right exact category. Consider a morphism
$(f,g,h)$
\[\SelectTips{cm}{}
\xymatrix{
A \ar[d]_f \ar@{>->}[r]^i & B \ar[d]^g \ar@{->>}[r]^d & C \ar[d]^h \\ 
A' \ar@{>->}[r]^{i'} & B' \ar@{->>}[r]^{d'} & C'
}\] 
between two conflations such that $f$ and $h$ are isomorphisms. Then so is $g$.
\end{lemma}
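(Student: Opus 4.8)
The plan is to use a single pushout to replace the middle term $B$ of the top conflation by a direct sum, and then to recognise $g$ as a morphism isomorphic to a canonical split monomorphism; the pushout trick is the idea from \cite[Theorem~6]{RW}. First I would reduce to the case $f=1_A$, $h=1_C$. Since $f$ is an isomorphism, $i'f$ is an inflation by Lemma~\ref{l:basic}(ii) and [R1], with cokernel $d'$; since $h$ is an isomorphism, $h^{-1}d'$ is also a cokernel of $i'f$, so $A\stackrel{i'f}{\rightarrowtail}B'\stackrel{h^{-1}d'}{\twoheadrightarrow}C$ is a conflation isomorphic to the bottom one, and $(1_A,g,1_C)$ is a morphism from the top conflation to it. Replacing the bottom conflation accordingly, we may assume $gi=i'$ and $d'g=d$; recall also that $i=\Ker(d)$, $d=\Coker(i)$, and similarly for the primed sequence, since every inflation gives a kernel-cokernel pair. (A short diagram chase then shows $g$ is both a monomorphism and an epimorphism, but only the easy fact that $d$ is an epimorphism will be used below.)

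Next I would form the pushout $P$ of $i\colon A\rightarrowtail B$ along $i'\colon A\rightarrowtail B'$, with legs $j'\colon B\to P$ and $j\colon B'\to P$; by [R2] both $j$ and $j'$ are inflations. Applying the universal property of the pushout to the pair $1_{B'}\colon B'\to B'$ and $g\colon B\to B'$, which agree on $A$ since $gi=i'$, produces $\phi\colon P\to B'$ with $\phi j=1_{B'}$ and $\phi j'=g$; so $j$ is a section, and it possesses a cokernel because it is an inflation. By Lemma~\ref{l:RW}, $\Coker(j)$ may be taken to be a morphism $q'\colon P\to C$ with $q'j'=d$, and $\Coker(j')$ a morphism $q\colon P\to C$ with $qj=d'$. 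Running the computation in the proof of Lemma~\ref{l:section} word for word (it uses only that the section $j$ has a cokernel, not weak idempotent completeness) yields that $\left[\begin{smallmatrix}\phi\\q'\end{smallmatrix}\right]\colon P\to B'\oplus C$ is an isomorphism.

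Under this isomorphism $j'$ is carried to $\left[\begin{smallmatrix}g\\d\end{smallmatrix}\right]\colon B\to B'\oplus C$, which is therefore an inflation, and its cokernel is the image of $q$; a short computation transporting $q$ through the isomorphism, using $d'g=d$ and that $d$ is an epimorphism, identifies it with $\left[\begin{smallmatrix}d'&-1\end{smallmatrix}\right]\colon B'\oplus C\to C$. Since $\left[\begin{smallmatrix}g\\d\end{smallmatrix}\right]$ is an inflation it is a kernel of its own cokernel $\left[\begin{smallmatrix}d'&-1\end{smallmatrix}\right]$; but $\left[\begin{smallmatrix}1\\d'\end{smallmatrix}\right]\colon B'\to B'\oplus C$ is manifestly also a kernel of $\left[\begin{smallmatrix}d'&-1\end{smallmatrix}\right]$. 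As two kernels of the same morphism differ by a unique isomorphism, there is an isomorphism $\theta\colon B\to B'$ with $\left[\begin{smallmatrix}1\\d'\end{smallmatrix}\right]\theta=\left[\begin{smallmatrix}g\\d\end{smallmatrix}\right]$; comparing first components gives $\theta=g$, whence $g$ is an isomorphism.

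The step I expect to be the main obstacle is the decomposition $P\cong B'\oplus C$, equivalently the assertion that $\left[\begin{smallmatrix}g\\d\end{smallmatrix}\right]$ is an inflation. In an exact category (or under axiom [R3], or under weak idempotent completeness) this is automatic, but here it has to be squeezed out of [R0]-[R2] alone; that is exactly why I first arrange for the section $j$ to carry a cokernel and then recycle the explicit splitting argument of Lemma~\ref{l:section}. Once that is in hand, the remaining identifications are routine diagram chases and matrix manipulations.
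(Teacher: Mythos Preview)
Your proof is correct and rests on the same key construction as the paper's: form the pushout of the two inflations $i$ and $i'$ (the paper writes it as the pushout of $i'$ along $if^{-1}$, which after your reduction $f=1_A$ is the same square) and exploit that both legs are inflations by [R2]. The execution then diverges. The paper first checks directly that $g$ is an epimorphism, then manufactures a section of the deflation $\beta$ out of $D$ and uses the resulting retraction $\delta$ of $\alpha$ to write down an explicit inverse $\delta g'$ for $g$, invoking the epimorphism property only at the last line. You instead observe that the \emph{other} leg $j\colon B'\to P$ is a section (with left inverse $\phi$ built from $g$) which already has a cokernel because it is an inflation, feed this into the explicit splitting computation from the proof of Lemma~\ref{l:section} to obtain $P\cong B'\oplus C$, and finish with the clean ``two kernels of $\left[\begin{smallmatrix} d' & -1\end{smallmatrix}\right]$'' argument. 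Your route sidesteps the separate verification that $g$ is an epimorphism (you only use that $d$ is one, to pin down $qv=-1_C$) and packages the endgame more conceptually; the paper's route is slightly more hands-on but avoids the matrix bookkeeping. Both arguments use exactly [R0]--[R2] and the kernel--cokernel pair property of conflations, so neither is more general than the other.
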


\begin{proof} We claim first that $g$ is an epimorphism. Indeed, if $v:B'\to D$ is a morphism such that $vg=0$, then
$vi'f=vgi=0$, whence $vi'=0$. Then there is a unique morphism $w:C'\to D$ such that $wd'=v$. We have
$whd=wd'g=vg=0$, whence $w=0$, and so $v=0$. Thus $g$ is an epimorphism. 

Now consider the pushout of $i'$ and $if^{-1}$ and use Lemma \ref{l:RW} to obtain the following commutative
diagram:
\[\SelectTips{cm}{}
\xymatrix{
A \ar[d]_f \ar@{>->}[r]^i & B \ar[d]^g \ar@{->>}[r]^d & C \ar[d]^h \\ 
A' \ar@{>->}[r]^{i'} \ar[d]_{if^{-1}} & B' \ar@{->>}[r]^{d'} \ar[d]^{g'} & C' \ar@{=}[d]\\
B \ar@{>->}[r]^{\alpha} & D \ar@{->>}[r]^{\beta} & C'
}\] Then $\alpha$ is an inflation by [R2] and $\beta$ is its cokernel by Lemma \ref{l:RW}, hence the last row is a
conflation. We have $(g'g-\alpha)if^{-1}=g'i'ff^{-1}-g'i'=0$, and so $(g'g-\alpha)i=0$.
Since $d=\Coker(i)$, there is a unique morphism $\gamma:C\to D$ such that $\gamma d=g'g-\alpha$. Then $\beta \gamma
d=\beta g'g-\beta \alpha=d'g=hd$, whence it follows that $\beta \gamma h^{-1}=1$, because $d$ is an epimorphism. Since
$\beta(1_D-\gamma h^{-1}\beta)=0$ and $\alpha=\Ker(\beta)$, there is a unique morphism $\delta:D\to B$ such that
$\alpha\delta=1_D-\gamma h^{-1}\beta$. This implies that $\alpha \delta \alpha=\alpha$ and $\alpha \delta \gamma=0$, and
so $\delta \alpha=1_B$ and $\delta \gamma=0$, because $\alpha$ is a monomorphism. Now we have $\delta g'g=\delta \gamma
d+\delta \alpha=1_B$. On the other hand, the equality $g\delta g'g=g$ implies that $g\delta g'=1_{B'}$, because $g$ is
an epimorphism. This shows that $g$ is an isomorphism with inverse $\delta g'$.
\end{proof}

\begin{proposition} \label{p:part} Let $\mathcal{C}$ be a right exact category. Consider the commutative square
\[\SelectTips{cm}{}
\xymatrix{
A \ar[d]_f \ar@{>->}[r]^i & B \ar[d]^g \\ 
A' \ar@{>->}[r]^{i'} & B'  
}\]
where $i$ and $i'$ are inflations. Then the square is a pushout if and only if it is part of a commutative diagram
\[\SelectTips{cm}{}
\xymatrix{
A \ar[d]_f \ar@{>->}[r]^i & B \ar[d]^g \ar@{->>}[r]^d & C \ar@{=}[d] \\ 
A' \ar@{>->}[r]^{i'} & B' \ar@{->>}[r]^{d'} & C 
}\] 
where the rows are conflations.
\end{proposition}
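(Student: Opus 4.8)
The statement is an ``if and only if'' where the ``if'' direction is already essentially Lemma~\ref{l:RW}: if the square sits inside such a diagram with conflation rows, then by Lemma~\ref{l:RW} the pushout of $i$ and $f$ exists and produces a cokernel $d':B'\to C$ of $i'$ together with a map $C\to C$ that, comparing with the given $d'$ and using that $d$ is an epimorphism (or a cokernel), must be the identity; hence the given square is that pushout. So the plan is to spell this out carefully, making sure the induced map on cokernels is the identity by invoking the universal property of $d=\Coker(i)$ and the uniqueness in \cite[Proposition~3.4, Chapter~IV]{Sten}.

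For the ``only if'' direction, suppose the square is a pushout. Since $i$ is an inflation, it has a cokernel $d:B\twoheadrightarrow C$, giving the top conflation $A\stackrel{i}{\rightarrowtail}B\stackrel{d}{\twoheadrightarrow}C$. Apply Lemma~\ref{l:RW} to $i$ and $f$: the pushout (which is exactly our given square, up to the canonical isomorphism identifying pushouts) extends to a commutative diagram with a cokernel $d':B'\to C$ of $i'$ and a right-hand identity on $C$. Because $i'$ is assumed to be an inflation, it already has a cokernel, and $d'=\Coker(i')$ by the pushout construction; by \cite[Proposition~3.4, Chapter~IV]{Sten} the pair $A'\stackrel{i'}{\rightarrowtail}B'\stackrel{d'}{\twoheadrightarrow}C$ is a conflation (the cokernel of an inflation is a deflation). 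This produces precisely the desired diagram with conflation rows.

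The one delicate point — the only place where something genuinely has to be checked rather than quoted — is the compatibility of the two uses of ``pushout'': the abstract pushout of $i$ and $f$ is only determined up to isomorphism, so I must argue that the \emph{given} square, with the \emph{given} $g:B\to B'$, is a pushout square (which is the hypothesis in the ``only if'' direction, and must be re-derived in the ``if'' direction). In the ``if'' direction this follows by comparing the given square against the pushout square from Lemma~\ref{l:RW}: both have the same outer data $A\xrightarrow{f}A'$, $A\xrightarrow{i}B$, and the induced comparison morphism between the two apices fits into a morphism of conflations which is the identity on $A$, on $A'$, and on $C$, so the Short Five Lemma (Lemma~\ref{l:five}) forces it to be an isomorphism — hence the given square is also a pushout. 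I expect this invocation of Lemma~\ref{l:five} to identify the comparison map as an isomorphism to be the main (and only real) obstacle; everything else is bookkeeping with cokernels and the cited result from Stenström.
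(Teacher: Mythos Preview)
Your plan is correct and matches the paper's proof in substance. For the ``only if'' direction both you and the paper invoke Lemma~\ref{l:RW} (with [R2] guaranteeing that the pushout inflation $i'$ really is an inflation, hence the second row is a conflation). For the ``if'' direction the paper cites Proposition~\ref{p:nine} to produce the intermediate conflation $A'\rightarrowtail D\twoheadrightarrow C$ with upper-left square a pushout, and then applies the Short Five Lemma to the comparison map $D\to B'$; your plan does exactly this, just unpacking the construction of $D$ directly as the pushout of $i$ and $f$ (via [R2]) and extending it with Lemma~\ref{l:RW}. The one place your first paragraph is slightly misleading is the phrase ``by Lemma~\ref{l:RW} the pushout of $i$ and $f$ exists'': existence comes from [R2], not from Lemma~\ref{l:RW}, which only tells you what the cokernel of the pushout map is once the pushout is in hand --- but your final paragraph gets this right.
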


\begin{proof} Assume first that the square is a pushout. Then the existence of the required commutative diagram follows
by Lemma \ref{l:RW}. Note that $i'$ is an inflation by [R2], and $d'$ is the cokernel of $i'$, so the second row is a
conflation.

Suppose now that the square is part of a commutative diagram as above. By Proposition \ref{p:nine} one obtains a
$3\times 3$ diagram as in the same proposition with $C'=C$. Now by Lemma \ref{l:five} (Short Five Lemma), $g''$ is an
isomorphism, which implies the conclusion.
\end{proof}

\begin{proposition} \label{p:double} Let $\mathcal{C}$ be a right exact category and consider a commutative diagram
\[\SelectTips{cm}{}
\xymatrix{
A \ar@{=}[d] \ar@{>->}[r]^{i} & B \ar[d]^g \ar@{->>}[r]^{d} & C \ar[d]^h \ar@{>->}[r]^{j} & D \ar[d]^f \\ 
A \ar@{>->}[r]^{i'} & B' \ar@{->>}[r]^{d'} & C' \ar@{>->}[r]^{j'} & D'
}\] 
in which the morphisms $i,i',j,j'$ are inflations, $d,d'$ are deflations and the right square is a pushout. Then
the short exact sequence 
\[\xymatrix{B\ar[rr]^-{\left [\begin{smallmatrix} g\\jd \end{smallmatrix}\right ]} &&
B'\oplus D\ar[rr]^-{\left [\begin{smallmatrix} j'd'&-f \end{smallmatrix}\right ]} && D'}\]
is a conflation. 
\end{proposition}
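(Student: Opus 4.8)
The plan is to prove the statement by exhibiting $\left[\begin{smallmatrix} g\\jd \end{smallmatrix}\right]\colon B\to B'\oplus D$ as an inflation whose cokernel is $\left[\begin{smallmatrix} j'd' & -f \end{smallmatrix}\right]\colon B'\oplus D\to D'$; the displayed sequence is then a conflation by the definition of a conflation. Throughout I use that the rows $A\stackrel{i}\rightarrowtail B\stackrel{d}\twoheadrightarrow C$ and $A\stackrel{i'}\rightarrowtail B'\stackrel{d'}\twoheadrightarrow C'$ are conflations (so $di=0$, $d=\Coker(i)$, $d'=\Coker(i')$), together with the relations $gi=i'$, $d'g=hd$ and $fj=j'h$ read off the diagram.

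First I would show that $\left[\begin{smallmatrix} g\\d \end{smallmatrix}\right]\colon B\to B'\oplus C$ is an inflation. The key point is that the square
\[\SelectTips{cm}{}
\xymatrix{
A \ar[d]_{i} \ar@{>->}[r]^{i'} & B' \ar[d]^{\left[\begin{smallmatrix} 1\\0 \end{smallmatrix}\right]} \\
B \ar[r]^{\left[\begin{smallmatrix} g\\d \end{smallmatrix}\right]} & B'\oplus C
}\]
is a pushout: it commutes since $\left[\begin{smallmatrix} g\\d \end{smallmatrix}\right]i=\left[\begin{smallmatrix} gi\\di \end{smallmatrix}\right]=\left[\begin{smallmatrix} i'\\0 \end{smallmatrix}\right]=\left[\begin{smallmatrix} 1\\0 \end{smallmatrix}\right]i'$, and for the universal property, given $\phi\colon B\to T$ and $\psi\colon B'\to T$ with $\phi i=\psi i'$, the only possible factoring morphism is $\left[\begin{smallmatrix} \psi & \theta \end{smallmatrix}\right]$, where $\theta\colon C\to T$ is the unique morphism with $\theta d=\phi-\psi g$ (it exists since $(\phi-\psi g)i=\phi i-\psi i'=0$ and $d=\Coker(i)$). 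As $i'$ is an inflation, axiom [R2] gives that its pushout along $i$ — which is exactly the morphism $\left[\begin{smallmatrix} g\\d \end{smallmatrix}\right]$ — is an inflation. Next, $1_{B'}\oplus j\colon B'\oplus C\to B'\oplus D$ is the inflation of the direct sum of the conflation $B'\stackrel{1_{B'}}\rightarrowtail B'\twoheadrightarrow 0$ (using Lemma~\ref{l:basic}) and the conflation $C\stackrel{j}\rightarrowtail D\twoheadrightarrow \Coker(j)$, so it is an inflation by Lemma~\ref{l:ds}. Composing, $\left[\begin{smallmatrix} g\\jd \end{smallmatrix}\right]=(1_{B'}\oplus j)\left[\begin{smallmatrix} g\\d \end{smallmatrix}\right]$ is an inflation by axiom [R1].

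It then remains to identify the cokernel. Since the leftmost vertical morphism in the given diagram is the identity, $d$ is an epimorphism and $d'=\Coker(i')$, Lemma~\ref{l:po} shows that the middle square of the diagram is a pushout. Pasting it with the right square, which is a pushout by hypothesis, gives that the square with horizontal arrows $jd\colon B\to D$ and $j'd'\colon B'\to D'$ and vertical arrows $g$ and $f$ is a pushout. By the standard description of a pushout in an additive category this says exactly that $\left[\begin{smallmatrix} j'd' & -f \end{smallmatrix}\right]\colon B'\oplus D\to D'$ is a cokernel of $\left[\begin{smallmatrix} g\\jd \end{smallmatrix}\right]$; alternatively one checks directly that $\left[\begin{smallmatrix} j'd' & -f \end{smallmatrix}\right]\left[\begin{smallmatrix} g\\jd \end{smallmatrix}\right]=j'd'g-fjd=j'hd-j'hd=0$ and deduces the universal property of this cokernel from that of the pasted pushout. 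Hence $\left[\begin{smallmatrix} g\\jd \end{smallmatrix}\right]$ is an inflation with cokernel $\left[\begin{smallmatrix} j'd' & -f \end{smallmatrix}\right]$, and the displayed short exact sequence is a conflation.

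The point that needs the most care is the first one: axiom [R2] outputs, as an inflation, the base change of the inflation being pushed out, so one has to push out $i'\colon A\rightarrowtail B'$ along $i\colon A\to B$ — and not $i$ along $i'$ — in order to get $\left[\begin{smallmatrix} g\\d \end{smallmatrix}\right]$, rather than the section $\left[\begin{smallmatrix} 1\\0 \end{smallmatrix}\right]$ (which need not be an inflation), as the result. Everything else is a routine combination of axioms [R1], [R2] and Lemmas \ref{l:basic}, \ref{l:po} and \ref{l:ds}.
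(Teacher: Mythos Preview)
Your proof is correct and follows essentially the same route as the paper's own argument: both verify that the square with vertices $A,B',B,B'\oplus C$ is a pushout to deduce via [R2] that $\left[\begin{smallmatrix} g\\d \end{smallmatrix}\right]$ is an inflation, then compose with the inflation $1_{B'}\oplus j$ (Lemma~\ref{l:ds} and [R1]), and separately use Lemma~\ref{l:po} plus pasting of pushouts to identify $\left[\begin{smallmatrix} j'd' & -f \end{smallmatrix}\right]$ as the cokernel. The only difference is the order in which the two halves are presented.
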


\begin{proof} By Lemma \ref{l:po} the square $BCB'C'$ is a pushout, whence it follows that the rectangle $BDB'D'$
is also a pushout. This is equivalent to the fact that $\left [\begin{smallmatrix} j'd'&-f \end{smallmatrix}\right ]$ is
a cokernel of $\left [\begin{smallmatrix} g\\jd \end{smallmatrix}\right ]$. 

Next let us show that the following commutative square is a pushout:
\[\SelectTips{cm}{}
\xymatrix{
A \ar[d]_{i} \ar@{>->}[r]^{i'} & B' \ar[d]^-{\left [\begin{smallmatrix} 1\\0 \end{smallmatrix}\right ]} \\ 
B \ar[r]^-{\left [\begin{smallmatrix} g\\d \end{smallmatrix}\right ]} & B'\oplus C  
}\] To this end, let $u:B'\to E$ and $v:B\to E$ be morphisms such that $ui'=vi$. Since $(v-ug)i=0$ and
$d=\Coker(i)$, there is a unique morphism $w:C\to E$ such that $wd=v-ug$. Then it is easy to see that $\left
[\begin{smallmatrix} u&w \end{smallmatrix}\right ]:B'\oplus C\to E$ is the unique morphism required for the
pushout property of the previous square. It follows that $\left [\begin{smallmatrix} g\\d \end{smallmatrix}\right ]$ is
an inflation by [R2]. 

Since $j$ is an inflation, so is $\left [\begin{smallmatrix} 1&0\\0&j \end{smallmatrix}\right ]:B'\oplus C\to B'\oplus
C'$ by Lemma \ref{l:ds}. Then $\left [\begin{smallmatrix} g\\jd \end{smallmatrix}\right ]=\left
[\begin{smallmatrix} 1&0\\0&j \end{smallmatrix}\right ] \left [\begin{smallmatrix} g\\d \end{smallmatrix}\right ]$ is an
inflation by [R1]. Now the conclusion follows. 
\end{proof}

In order to complete Proposition \ref{p:part} with other characterizations of pushouts we need now to assume axiom
[R0$*$].

\begin{proposition}\label{p:axiom1/2} Let $\mathcal{C}$ be a right exact category satisfying also {\rm [R0$*$]}.
For every objects $A$ and $B$ of $\mathcal{C}$, the short exact sequence \[\xymatrix{A\ar[r]^-{\left
[\begin{smallmatrix} 1\\0 \end{smallmatrix}\right ]} & A\oplus B\ar[r]^-{\left [\begin{smallmatrix}
0&1\end{smallmatrix}\right ]} & B}\] is a conflation.
\end{proposition}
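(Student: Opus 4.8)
The plan is to obtain the stated sequence as the conflation associated with an inflation produced by a single application of axiom [R2]. Since $\mathcal{C}$ satisfies [R0$*$], the morphism $0\to B$ is an inflation for every object $B$. In an additive category the pushout of the span $A\leftarrow 0\to B$ is the biproduct $A\oplus B$, equipped with the coprojections $\left[\begin{smallmatrix}1\\0\end{smallmatrix}\right]\colon A\to A\oplus B$ and $\left[\begin{smallmatrix}0\\1\end{smallmatrix}\right]\colon B\to A\oplus B$. Applying [R2] to the inflation $0\to B$ along the morphism $0\to A$, this pushout exists (which we already know) and the morphism opposite to $0\to B$ in the pushout square, namely $\left[\begin{smallmatrix}1\\0\end{smallmatrix}\right]\colon A\to A\oplus B$, is again an inflation.

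It then remains to identify the associated deflation, i.e. a cokernel of $\left[\begin{smallmatrix}1\\0\end{smallmatrix}\right]$. Here I would invoke Lemma \ref{l:RW} with $i$ the inflation $0\to B$, whose cokernel is $1_B\colon B\to B$, and $f$ the morphism $0\to A$: the pushout square extends to a commutative diagram whose lower row is $A\to A\oplus B\to B$, in which the second morphism $d'$ is a cokernel of $\left[\begin{smallmatrix}1\\0\end{smallmatrix}\right]$ and, by commutativity of the right-hand square, satisfies $d'\left[\begin{smallmatrix}0\\1\end{smallmatrix}\right]=1_B$. Since the cokernel of the split monomorphism $\left[\begin{smallmatrix}1\\0\end{smallmatrix}\right]$ is necessarily $\left[\begin{smallmatrix}0&1\end{smallmatrix}\right]$, this forces $d'=\left[\begin{smallmatrix}0&1\end{smallmatrix}\right]\colon A\oplus B\to B$, so the conflation arising from the inflation $\left[\begin{smallmatrix}1\\0\end{smallmatrix}\right]$ is precisely the sequence in the statement.

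I expect essentially no genuine obstacle here; the only point requiring a line of care is the identification of the morphism in the pushout square opposite to $0\to B$ with the coprojection $\left[\begin{smallmatrix}1\\0\end{smallmatrix}\right]$ rather than a twisted version of it, which is immediate from the definition of the biproduct and the universal property of the pushout. Alternatively one can bypass Lemma \ref{l:RW} altogether: once $\left[\begin{smallmatrix}1\\0\end{smallmatrix}\right]$ is known to be an inflation it is in particular a kernel, hence by \cite[Proposition~3.4, Chapter~IV]{Sten} it gives rise to a conflation, and its cokernel, being that of a split monomorphism, must be $\left[\begin{smallmatrix}0&1\end{smallmatrix}\right]$.
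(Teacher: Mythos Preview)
Your argument is correct but follows a different path from the paper. The paper observes that the split sequence is the direct sum of the two conflations $A\stackrel{1_A}{\rightarrowtail} A\twoheadrightarrow 0$ (using Lemma~\ref{l:basic}(i)) and $0\rightarrowtail B\stackrel{1_B}{\twoheadrightarrow} B$ (using [R0$*$]), and then invokes Lemma~\ref{l:ds} on closure of conflations under finite direct sums. Your approach instead produces the inflation $\left[\begin{smallmatrix}1\\0\end{smallmatrix}\right]$ by a single application of [R2] to the inflation $0\to B$ pushed out along $0\to A$, and then reads off the cokernel. Both proofs ultimately rest on [R0$*$], but yours is slightly more self-contained in that it avoids Lemma~\ref{l:ds}, whose proof in turn appeals to [R1]; on the other hand, the paper's one-line argument is cleaner once that lemma is in hand. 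Your alternative final paragraph (bypassing Lemma~\ref{l:RW} by simply noting that an inflation has the displayed cokernel) is the tidier way to finish.
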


\begin{proof} By assumption $0\to B$ is an inflation. The required sequence is the direct sum of two
conflations, namely $A\rightarrowtail A\twoheadrightarrow 0$ and $0\rightarrowtail B\twoheadrightarrow B$, hence the
result follows by Lemma \ref{l:ds}.
\end{proof}

\begin{proposition} \label{p:pushout} Let $\mathcal{C}$ be a right exact category satisfying also {\rm [R0$*$]}.
Consider the commutative square
\[\SelectTips{cm}{}
\xymatrix{
A \ar[d]_f \ar@{>->}[r]^i & B \ar[d]^g \\ 
A' \ar@{>->}[r]^{i'} & B'  
}\]
where $i$ and $i'$ are inflations. The following are equivalent:

(i) The square is a pushout.

(ii) The short exact sequence \[\xymatrix{A\ar[r]^-{\left [\begin{smallmatrix} i\\f \end{smallmatrix}\right ]} &
B\oplus A'\ar[r]^-{\left [\begin{smallmatrix} g&-i'\end{smallmatrix}\right ]} & B'}\] is a conflation. 

(iii) The square is both a pushout and a pullback.
\end{proposition}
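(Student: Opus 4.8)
The plan is to prove the cycle of implications (i) $\Rightarrow$ (ii) $\Rightarrow$ (iii) $\Rightarrow$ (i). First I would handle (i) $\Rightarrow$ (ii). Assume the square is a pushout. By Lemma \ref{l:RW} (applied to $i$ and $f$) the pushout fits into a commutative diagram with rows $A\rightarrowtail B\twoheadrightarrow C$ and $A'\rightarrowtail B'\twoheadrightarrow C$, where $i'$ is an inflation by [R2] and $d'$ is a cokernel of $i'$; this is exactly Proposition \ref{p:part}. Now I would feed this into Proposition \ref{p:double} with the degenerate choice in which the ``$j$'' part of that diagram is trivial — concretely, take the right square of the Proposition \ref{p:double} diagram to be the pushout of $d\colon B\twoheadrightarrow C$ along the identity, so that $j=1_C$, $j'=1_{C'}=1_C$, $D=C$, $D'=C$, $f=h$. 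Since the square $ABA'B'$ being a pushout forces (by Lemma \ref{l:po}) the square $BCB'C$ to be a pushout as well, the hypotheses of Proposition \ref{p:double} are met, and its conclusion says precisely that
\[
\xymatrix{B\ar[rr]^-{\left[\begin{smallmatrix} g\\ d\end{smallmatrix}\right]} && B'\oplus C\ar[rr]^-{\left[\begin{smallmatrix} d'&-1\end{smallmatrix}\right]} && C}
\]
is a conflation; but one checks directly that this sequence is isomorphic, via an automorphism of $B'\oplus C$ built from $i'$ and $d'$, to the desired sequence $A\xrightarrow{[\,i\ f\,]^{t}} B\oplus A'\xrightarrow{[\,g\ -i'\,]} B'$. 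Alternatively — and perhaps more cleanly — I would argue directly: the pushout property says exactly that $\left[\begin{smallmatrix} g&-i'\end{smallmatrix}\right]$ is a cokernel of $\left[\begin{smallmatrix} i\\ f\end{smallmatrix}\right]$, so it remains to show the latter is an inflation, which follows because $\left[\begin{smallmatrix} i\\ f\end{smallmatrix}\right] = \left[\begin{smallmatrix} 1&0\\ f&1\end{smallmatrix}\right]\left[\begin{smallmatrix} i\\ 0\end{smallmatrix}\right]$, the first factor is an isomorphism (hence an inflation by Lemma \ref{l:basic}), and $\left[\begin{smallmatrix} i\\ 0\end{smallmatrix}\right]\colon A\to B\oplus A'$ is an inflation by [R1] applied to $i\colon A\rightarrowtail B$ followed by $\left[\begin{smallmatrix} 1\\ 0\end{smallmatrix}\right]\colon B\rightarrowtail B\oplus A'$, the latter being an inflation by Proposition \ref{p:axiom1/2} — this is exactly where axiom [R0$*$] is used.

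Next, (ii) $\Rightarrow$ (iii). Given that the displayed sequence is a conflation, $\left[\begin{smallmatrix} g&-i'\end{smallmatrix}\right]$ being a cokernel of $\left[\begin{smallmatrix} i\\ f\end{smallmatrix}\right]$ immediately gives that the original square is a pushout (this is the standard correspondence between pushout squares and cokernels of the difference map). For the pullback assertion I would argue that $\left[\begin{smallmatrix} i\\ f\end{smallmatrix}\right]$, being an inflation, is in particular a kernel; it is a kernel of its cokernel $\left[\begin{smallmatrix} g&-i'\end{smallmatrix}\right]$, and a monomorphism into a product which is the kernel of the difference of the two projections composed with $g$ and $i'$ is precisely the pullback of $g$ and $i'$. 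So the square is both a pushout and a pullback.

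Finally, (iii) $\Rightarrow$ (i) is trivial. The implication that will require the most care is (i) $\Rightarrow$ (ii): assembling $\left[\begin{smallmatrix} i\\ f\end{smallmatrix}\right]$ as a composite of inflations is clean, but one must be careful that this is where [R0$*$] genuinely enters (via Proposition \ref{p:axiom1/2}), and that the cokernel computation is the one delivered by the pushout hypothesis. The passage (ii) $\Rightarrow$ (iii) has a small subtlety in identifying the kernel of $\left[\begin{smallmatrix} g&-i'\end{smallmatrix}\right]$ with the pullback cone, but this is formal and holds in any additive category once one knows $\left[\begin{smallmatrix} i\\ f\end{smallmatrix}\right]$ is a kernel of that map.
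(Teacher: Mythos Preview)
Your overall strategy --- the cycle (i) $\Rightarrow$ (ii) $\Rightarrow$ (iii) $\Rightarrow$ (i), showing for (i) $\Rightarrow$ (ii) that the pushout property gives the cokernel while a factorization into inflations gives the inflation, and for (ii) $\Rightarrow$ (iii) reading off both universal properties from the kernel--cokernel pair --- is exactly the argument the paper invokes (it cites B\"uhler's Proposition~2.12, which proceeds just this way). So the plan is sound.

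However, your direct argument for (i) $\Rightarrow$ (ii) contains a type error. You write
\[
\left[\begin{smallmatrix} i\\ f\end{smallmatrix}\right] = \left[\begin{smallmatrix} 1&0\\ f&1\end{smallmatrix}\right]\left[\begin{smallmatrix} i\\ 0\end{smallmatrix}\right],
\]
with $\left[\begin{smallmatrix} i\\ 0\end{smallmatrix}\right]\colon A\to B\oplus A'$; but then the $(2,1)$-entry of the square matrix would have to be a morphism $B\to A'$, whereas $f\colon A\to A'$. The factorization does not compose. The fix is to route through $A\oplus A'$ instead: write
\[
\left[\begin{smallmatrix} i\\ f\end{smallmatrix}\right] = \left[\begin{smallmatrix} i&0\\ 0&1\end{smallmatrix}\right]\left[\begin{smallmatrix} 1\\ f\end{smallmatrix}\right],
\]
where $\left[\begin{smallmatrix} 1\\ f\end{smallmatrix}\right]\colon A\to A\oplus A'$ equals $\left[\begin{smallmatrix} 1&0\\ f&1\end{smallmatrix}\right]\left[\begin{smallmatrix} 1\\ 0\end{smallmatrix}\right]$ (now the matrix is an automorphism of $A\oplus A'$ and the column is an inflation by Proposition~\ref{p:axiom1/2}), and $\left[\begin{smallmatrix} i&0\\ 0&1\end{smallmatrix}\right]\colon A\oplus A'\to B\oplus A'$ is an inflation by Lemma~\ref{l:ds}. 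Then [R1] finishes as you intended.

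Your first, more roundabout route via Proposition~\ref{p:double} should be discarded: the conflation it produces is $B\rightarrowtail B'\oplus C\twoheadrightarrow C$, which has entirely different terms from the target sequence $A\rightarrowtail B\oplus A'\twoheadrightarrow B'$, so no automorphism of $B'\oplus C$ can relate them. The direct argument (once repaired) is both correct and the one the paper has in mind.
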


\begin{proof} Since we are assuming axiom [R0$*$], the proof is the same as the proof of the equivalence of 
the first three conditions in \cite[Proposition 2.12]{Buhler}.
\end{proof}

\begin{corollary} \label{c:infl} Let $\mathcal{C}$ be a right exact category satisfying also {\rm [R0$*$]}, and let
$f:A\to B$ and $f':A\to C$ be morphisms in $\mathcal{C}$ with $f$ an inflation. Then $\left [\begin{smallmatrix} f \\
f' \end{smallmatrix}\right ]:A\to B\oplus C$ is an inflation.
\end{corollary}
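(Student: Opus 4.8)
The plan is to realize $\left[\begin{smallmatrix} f \\ f' \end{smallmatrix}\right]:A\to B\oplus C$ as a composition of two inflations and then invoke axiom [R1]. First I would observe that $\left[\begin{smallmatrix} f \\ f' \end{smallmatrix}\right]$ factors as
\[
A \xrightarrow{\;\left[\begin{smallmatrix} 1 \\ f' \end{smallmatrix}\right]\;} A\oplus C \xrightarrow{\;\left[\begin{smallmatrix} f & 0 \\ 0 & 1 \end{smallmatrix}\right]\;} B\oplus C,
\]
since composing gives $\left[\begin{smallmatrix} f & 0 \\ 0 & 1 \end{smallmatrix}\right]\left[\begin{smallmatrix} 1 \\ f' \end{smallmatrix}\right]=\left[\begin{smallmatrix} f \\ f' \end{smallmatrix}\right]$. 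So it suffices to check that each factor is an inflation.

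For the first factor $\left[\begin{smallmatrix} 1 \\ f' \end{smallmatrix}\right]:A\to A\oplus C$, I would note that this is a section (with left inverse $\left[\begin{smallmatrix} 1 & 0 \end{smallmatrix}\right]$), but more to the point it is the pushout of the inflation $\left[\begin{smallmatrix} 1 \\ 0 \end{smallmatrix}\right]:A\to A\oplus C$ — which is an inflation by Proposition \ref{p:axiom1/2} (here [R0$*$] is used) — along the morphism $1_A:A\to A$ is not quite it; instead the cleaner route is: the square
\[\SelectTips{cm}{}
\xymatrix{
A \ar[d]_{f'} \ar@{>->}[r]^-{\left[\begin{smallmatrix} 1 \\ 0 \end{smallmatrix}\right]} & A\oplus C \ar[d]^{\left[\begin{smallmatrix} 1 & 0 \\ f' & 1 \end{smallmatrix}\right]} \\
C \ar[r]^-{\left[\begin{smallmatrix} 0 \\ 1 \end{smallmatrix}\right]} & A\oplus C
}\]
commutes and is a pushout (one checks the universal property directly, or notes the right vertical map is an isomorphism), so since $\left[\begin{smallmatrix} 1 \\ 0 \end{smallmatrix}\right]$ is an inflation by Proposition \ref{p:axiom1/2}, its pushout $\left[\begin{smallmatrix} 0 \\ 1 \end{smallmatrix}\right]$ is an inflation by [R2], and composing with the isomorphism $\left[\begin{smallmatrix} 1 & 0 \\ f' & 1 \end{smallmatrix}\right]^{-1}$ (an inflation by Lemma \ref{l:basic}(ii)) via [R1] shows $\left[\begin{smallmatrix} 1 \\ f' \end{smallmatrix}\right]$ is an inflation. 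For the second factor $\left[\begin{smallmatrix} f & 0 \\ 0 & 1 \end{smallmatrix}\right]:A\oplus C\to B\oplus C$, this is the direct sum of the conflation $A\rightarrowtail B\twoheadrightarrow B/A$ determined by $f$ with the split conflation $C\rightarrowtail C\twoheadrightarrow 0$, so it is an inflation by Lemma \ref{l:ds}.

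The main obstacle, such as it is, is keeping the bookkeeping of the matrix entries and pushout squares honest: [R0$*$] enters precisely to guarantee that $\left[\begin{smallmatrix} 1 \\ 0 \end{smallmatrix}\right]:A\to A\oplus C$ is an inflation (Proposition \ref{p:axiom1/2}), which is what makes the first factor work; without it one only knows that $\left[\begin{smallmatrix} 1 \\ 0 \end{smallmatrix}\right]:A\to A\oplus 0$ is an inflation, which is useless here. Everything else is an application of [R1], [R2], Lemma \ref{l:ds}, and Lemma \ref{l:basic}(ii). Finally, [R1] combines the two factors to conclude that $\left[\begin{smallmatrix} f \\ f' \end{smallmatrix}\right]$ is an inflation.
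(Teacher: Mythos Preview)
Your overall strategy --- factoring $\left[\begin{smallmatrix} f \\ f' \end{smallmatrix}\right]$ as $\left[\begin{smallmatrix} f & 0 \\ 0 & 1 \end{smallmatrix}\right]\left[\begin{smallmatrix} 1 \\ f' \end{smallmatrix}\right]$ and showing each factor is an inflation --- is sound and genuinely different from the paper's argument, which simply forms the pushout of the inflation $f$ along $f'$ and reads off the conclusion from the implication (i)$\Rightarrow$(ii) of Proposition~\ref{p:pushout}. Your route avoids Proposition~\ref{p:pushout} entirely, relying only on [R0$*$], [R1], Lemma~\ref{l:basic}(ii), Lemma~\ref{l:ds} and Proposition~\ref{p:axiom1/2}; the paper's route is a one-liner but leans on the heavier Proposition~\ref{p:pushout}.

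However, your treatment of the first factor contains a concrete slip: the displayed square does \emph{not} commute. Going right then down gives $\left[\begin{smallmatrix} 1 & 0 \\ f' & 1 \end{smallmatrix}\right]\left[\begin{smallmatrix} 1 \\ 0 \end{smallmatrix}\right]=\left[\begin{smallmatrix} 1 \\ f' \end{smallmatrix}\right]$, while going down then right gives $\left[\begin{smallmatrix} 0 \\ 1 \end{smallmatrix}\right]f'=\left[\begin{smallmatrix} 0 \\ f' \end{smallmatrix}\right]$, and these differ. Consequently $\left[\begin{smallmatrix} 0 \\ 1 \end{smallmatrix}\right]$ is not the pushout of $\left[\begin{smallmatrix} 1 \\ 0 \end{smallmatrix}\right]$ along $f'$ in this square, and in any case composing $\left[\begin{smallmatrix} 0 \\ 1 \end{smallmatrix}\right]:C\to A\oplus C$ with an automorphism of $A\oplus C$ cannot produce a map with domain $A$. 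The fix is the computation you already carried out along the way: simply write
\[
\left[\begin{smallmatrix} 1 \\ f' \end{smallmatrix}\right]=\left[\begin{smallmatrix} 1 & 0 \\ f' & 1 \end{smallmatrix}\right]\left[\begin{smallmatrix} 1 \\ 0 \end{smallmatrix}\right],
\]
where $\left[\begin{smallmatrix} 1 \\ 0 \end{smallmatrix}\right]$ is an inflation by Proposition~\ref{p:axiom1/2} (this is where [R0$*$] enters) and $\left[\begin{smallmatrix} 1 & 0 \\ f' & 1 \end{smallmatrix}\right]$ is an isomorphism, hence an inflation by Lemma~\ref{l:basic}(ii); then [R1] finishes. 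With that correction your argument is complete --- your handling of the second factor via Lemma~\ref{l:ds} is correct as written.
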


\begin{proof} We may consider the pushout of the inflation $f$ and the morphism $f'$. Then $\left [\begin{smallmatrix}
f \\ f' \end{smallmatrix}\right ]$ is an inflation by Proposition \ref{p:pushout}.
\end{proof}

For the next results we need to assume axiom [R3].

\begin{proposition} Let $\mathcal{C}$ be a strongly right exact category. Let $A\stackrel{i}\to B\stackrel{d}\to C$ and
$A'\stackrel{i'}\to B'\stackrel{d'}\to C'$ be composable morphisms such that $A\oplus A'\stackrel{i\oplus
i'}\rightarrowtail B\oplus B'\stackrel{d\oplus d'}\twoheadrightarrow C\oplus C'$ is a conflation. Then the short exact
sequences $A\stackrel{i}\to B\stackrel{d}\to C$ and $A'\stackrel{i'}\to B'\stackrel{d'}\to C'$ are also conflations.
\end{proposition}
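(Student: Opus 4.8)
The plan is to show that $i$ is an inflation having $d$ as its cokernel (and symmetrically for $i'$ and $d'$); by the very definition of conflation this is exactly the assertion that $A\stackrel{i}{\to}B\stackrel{d}{\to}C$ is a conflation. Note first that, $\mathcal{C}$ being strongly right exact, axiom [R0$*$] holds by Lemma~\ref{l:basic}, so Proposition~\ref{p:axiom1/2} is at our disposal.

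I would begin with the routine consequences of the hypothesis. From $(d\oplus d')(i\oplus i')=0$ one reads off $di=0$ and $d'i'=0$; and since a deflation is an epimorphism, $d\oplus d'$ is epic, which forces $d$ (and likewise $d'$) to be epic, by testing against morphisms of the form $\left[\begin{smallmatrix} u & w\end{smallmatrix}\right]$, $\left[\begin{smallmatrix} v & w\end{smallmatrix}\right]$. Next I would check that $d=\Coker(i)$: given $e\colon B\to E$ with $ei=0$, the morphism $\left[\begin{smallmatrix} e & 0\end{smallmatrix}\right]\colon B\oplus B'\to E$ annihilates $i\oplus i'$, hence factors uniquely through $d\oplus d'=\Coker(i\oplus i')$, and the first component of that factorization provides the desired morphism through $d$, uniqueness coming from $d$ being epic. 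In particular $i$ has a cokernel (and similarly $i'$).

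The key step is to realize $i$, which has a cokernel, as the ``first coordinate'' of an inflation. By Proposition~\ref{p:axiom1/2} the sequence $A\xrightarrow{\left[\begin{smallmatrix}1\\0\end{smallmatrix}\right]} A\oplus A'\xrightarrow{\left[\begin{smallmatrix}0 & 1\end{smallmatrix}\right]} A'$ is a conflation, so $\left[\begin{smallmatrix}1\\0\end{smallmatrix}\right]\colon A\to A\oplus A'$ is an inflation; composing with the inflation $i\oplus i'$ and invoking [R1] shows that $\left[\begin{smallmatrix}i\\0\end{smallmatrix}\right]=(i\oplus i')\left[\begin{smallmatrix}1\\0\end{smallmatrix}\right]\colon A\to B\oplus B'$ is an inflation. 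Since $\left[\begin{smallmatrix}i\\0\end{smallmatrix}\right]=\left[\begin{smallmatrix}1\\0\end{smallmatrix}\right]\circ i$ with $\left[\begin{smallmatrix}1\\0\end{smallmatrix}\right]\colon B\to B\oplus B'$, and $i$ has a cokernel, axiom [R3] yields that $i$ is an inflation. Then $A\stackrel{i}{\to}B\stackrel{d}{\to}C$ is an inflation together with its cokernel, hence a conflation, and the identical argument for $i'$ completes the proof.

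The main obstacle — and the reason [R3] rather than merely [R2] is invoked — is precisely the need to feed [R3] the hypothesis that $i$ has a cokernel: unlike in the exact or weakly idempotent complete settings, one cannot take this for granted and must first extract $\Coker(i)$ from $\Coker(i\oplus i')$ by hand. The remaining content is the bookkeeping with matrix morphisms and the use of the split conflation of Proposition~\ref{p:axiom1/2}, which relies on [R0$*$], available here by Lemma~\ref{l:basic}.
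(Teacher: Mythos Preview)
Your proof is correct and follows essentially the same route as the paper, which simply cites \cite[Corollary~2.18]{Buhler}: extract $d=\Coker(i)$ from the direct-sum conflation, obtain the inflation $\left[\begin{smallmatrix}i\\0\end{smallmatrix}\right]$ by composing the split inflation of Proposition~\ref{p:axiom1/2} with $i\oplus i'$ via [R1], and then apply [R3]. Your explicit verification that $i$ has a cokernel before invoking [R3] is exactly the care needed to transport B\"uhler's exact-category argument to the strongly right exact setting.
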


\begin{proof} See \cite[Corollary 2.18]{Buhler}.
\end{proof}

\begin{lemma} \label{l:s5lker} Let $\mathcal{C}$ be a strongly right exact category and consider a
commutative diagram
\[\SelectTips{cm}{}
\xymatrix{
A \ar@{=}[d] \ar@{>->}[r]^i & B \ar[d]^g \ar@{->>}[r]^d & C \ar@{>->}[d]^h \\ 
A \ar@{>->}[r]^{i'} & B' \ar@{->>}[r]^{d'} & C'
}\] 
in which the rows are conflations and $h$ is an inflation. Then $g$ is an inflation.
\end{lemma}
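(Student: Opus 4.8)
The plan is to obtain the conclusion directly from axiom [R3] applied to $g\colon B\to B'$. For this I must verify the two hypotheses of [R3]: that $g$ has a cokernel, and that $pg$ is an inflation for a suitable morphism $p$ out of $B'$. Once both are in place, [R3] gives at once that $g$ is an inflation. Note that [R3] is genuinely needed here — $g$ is \emph{a priori} only a morphism between the middle terms of two conflations.

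First I would build a cokernel of $g$. Since $h$ is an inflation it has a cokernel $e\colon C'\to C''$, which is a deflation and in particular an epimorphism. I would then claim that $ed'\colon B'\to C''$ is a cokernel of $g$. To check this: $ed'g=ehd=0$; and if $t\colon B'\to T$ satisfies $tg=0$, then $ti'=tgi=0$, so $t=s'd'$ for a unique $s'\colon C'\to T$ because $d'=\Coker(i')$, after which $s'hd=s'd'g=tg=0$ forces $s'h=0$ since $d$ is epic, so $s'=se$ for a unique $s\colon C''\to T$ because $e=\Coker(h)$; then $s(ed')=t$, and $s$ is unique since $d'$ and $e$ are epimorphisms. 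Hence $ed'=\Coker(g)$.

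Next I would produce the required inflation. Exactly as in the proof of Proposition~\ref{p:double} (where only $gi=i'$, $di=0$, $d=\Coker(i)$ and [R2] are used), the square with top edge $i'$, left edge $i$, right edge $\left[\begin{smallmatrix} 1\\0\end{smallmatrix}\right]\colon B'\to B'\oplus C$ and bottom edge $\left[\begin{smallmatrix} g\\d\end{smallmatrix}\right]\colon B\to B'\oplus C$ is a pushout of $i'$ along $i$, so $\left[\begin{smallmatrix} g\\d\end{smallmatrix}\right]$ is an inflation by [R2]. Since $h$ is an inflation, $\left[\begin{smallmatrix} 1&0\\0&h\end{smallmatrix}\right]\colon B'\oplus C\to B'\oplus C'$ is an inflation by Lemma~\ref{l:ds}, whence by [R1] the composition
\[
\left[\begin{smallmatrix} 1&0\\0&h\end{smallmatrix}\right]\left[\begin{smallmatrix} g\\d\end{smallmatrix}\right]=\left[\begin{smallmatrix} g\\hd\end{smallmatrix}\right]=\left[\begin{smallmatrix} g\\d'g\end{smallmatrix}\right]=\left[\begin{smallmatrix} 1\\d'\end{smallmatrix}\right]g
\]
is an inflation. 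Taking $p=\left[\begin{smallmatrix} 1\\d'\end{smallmatrix}\right]\colon B'\to B'\oplus C'$, axiom [R3] now applies to $g$ and $p$ and yields that $g$ is an inflation.

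The only point that is not pure bookkeeping is the construction of $\Coker(g)$: realizing that composing $d'$ with $\Coker(h)$ gives a cokernel of $g$ is exactly what unlocks [R3]. Everything else is a routine combination of [R1], [R2], Lemma~\ref{l:ds} and the pushout computation already carried out in Proposition~\ref{p:double}; in particular the argument does not need axiom [R0$*$], only [R3].
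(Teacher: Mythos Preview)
Your proof is correct and follows essentially the same route as the paper: first exhibit $\Coker(h)\circ d'$ as a cokernel of $g$, then show $\left[\begin{smallmatrix} g\\d\end{smallmatrix}\right]$ is an inflation via the pushout argument of Proposition~\ref{p:double}, compose with $\left[\begin{smallmatrix} 1&0\\0&h\end{smallmatrix}\right]$ to get $\left[\begin{smallmatrix} 1\\d'\end{smallmatrix}\right]g$ as an inflation, and apply [R3]. The only cosmetic difference is that the paper verifies $\Coker(g)=h'd'$ by invoking the pushout property of the right square (via Lemma~\ref{l:po}) rather than factoring directly through $d'=\Coker(i')$ and then $e=\Coker(h)$ as you do; both arguments are equivalent.
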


\begin{proof} We claim first that $g$ has a cokernel. More precisely, we prove that $\Coker(g)=h'd'$, where
$h'=\Coker(h):C'\to C''$. Let $f:B'\to D$ be a morphism such that $fg=0$. The square $BCB'C'$ is a pushout by Lemma
\ref{l:po}. Hence there is a unique morphism $\gamma:C'\to D$ such that $\gamma d'=f$ and $\gamma h=0$.
Since $h'=\Coker(h)$, there is a unique morphism $\delta:C''\to D$ such that $\delta h'=\gamma$. Then $\delta h'd'=f$,
whence $\Coker(g)=h'd'$.

By Proposition \ref{p:double}, $\left [\begin{smallmatrix} g\\d \end{smallmatrix}\right ]$
is an inflation. Since $h$ is an inflation, so is $\left [\begin{smallmatrix} 1&0\\0&h \end{smallmatrix}\right
]:B'\oplus C\to B'\oplus C'$ by Lemma \ref{l:ds}. Note that $\left [\begin{smallmatrix} 1\\d' \end{smallmatrix}\right
]g=\left [\begin{smallmatrix} 1&0\\0&h \end{smallmatrix}\right ] \left [\begin{smallmatrix} g\\d \end{smallmatrix}\right
]$ is an inflation by [R1]. Since $g$ has a cokernel, axiom [R3] shows that $g$ must be an inflation. 
\end{proof}

Now we are able to prove the $3\times 3$ Lemma in strongly right exact categories. Once again, the proof of the 
same result in exact categories \cite[Corollary~3.6]{Buhler} cannot be transferred to our setting. 

\begin{proposition}[$3\times 3$ Lemma] Let $\mathcal{C}$ be a strongly right exact category and consider the
following commutative diagram:
\[\SelectTips{cm}{}
\xymatrix{
A \ar@{>->}[d]_{f} \ar@{>->}[r]^{i} & B \ar@{>->}[d]^{g} \ar@{->>}[r]^{d} & C \ar@{>->}[d]^{h} \\ 
A' \ar@{->>}[d]_{f'} \ar@{>->}[r]^{i'} & B' \ar@{->>}[d]^{g'} \ar@{->>}[r]^{d'} & C' \ar@{->>}[d]^{h'} \\ 
A'' \ar[r]^{i''} & B'' \ar[r]^{d''} & C''
}\] 
in which the columns and the first two rows are conflations. Then the third row is also a conflation. 
\end{proposition}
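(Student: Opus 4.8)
The plan is to introduce an auxiliary object via a pushout, use Lemma~\ref{l:s5lker} to see that an induced map into $B'$ is an inflation, and then realise $i''$ itself as a pushout of that inflation, so that it becomes an inflation by [R2]; that $d''$ is a cokernel of $i''$ will then follow by a diagram chase. Concretely, I would first form the pushout $P$ of the inflations $i\colon A\rightarrowtail B$ and $f\colon A\rightarrowtail A'$, with legs $\beta\colon B\to P$ and $\alpha\colon A'\to P$. By [R2] both $\alpha$ and $\beta$ are inflations, and two applications of Lemma~\ref{l:RW} (to $i$ along $f$, and to $f$ along $i$) produce conflations $A'\stackrel{\alpha}\rightarrowtail P\stackrel{\bar d}\twoheadrightarrow C$ and $B\stackrel{\beta}\rightarrowtail P\stackrel{\bar f}\twoheadrightarrow A''$ with $\bar d\beta=d$ and $\bar f\alpha=f'$. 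The universal property of $P$ gives a morphism $\theta\colon P\to B'$ with $\theta\beta=g$ and $\theta\alpha=i'$, and comparing on the legs yields $d'\theta=h\bar d$.

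Next I would show that $\theta$ is an inflation, which is precisely Lemma~\ref{l:s5lker} applied to the commutative diagram
\[\SelectTips{cm}{}
\xymatrix{
A' \ar@{=}[d] \ar@{>->}[r]^{\alpha} & P \ar[d]^{\theta} \ar@{->>}[r]^{\bar d} & C \ar@{>->}[d]^{h} \\
A' \ar@{>->}[r]^{i'} & B' \ar@{->>}[r]^{d'} & C'
}\]
whose rows are conflations and whose right vertical map $h$ is an inflation (the third column of the given diagram); this is the only place where axiom [R3] is used. I would then check that the canonical map $i''\colon A''\to B''$ of the statement (the unique one with $i''f'=g'i'$) together with $g'$ exhibits $B''$ as the pushout of $\theta$ along $\bar f\colon P\twoheadrightarrow A''$: indeed $g'\theta=i''\bar f$, and any cocone $(v\colon B'\to E,\,u\colon A''\to E)$ with $v\theta=u\bar f$ satisfies $vg=0$, hence factors uniquely through $g'=\Coker(g)$, while compatibility with $u$ follows because $\bar f$ is an epimorphism. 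By [R2] the map $i''$ is therefore an inflation.

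It remains to verify that $d''$ (the unique map with $d''g'=h'd'$) is a cokernel of $i''$. Given $u\colon B''\to E$ with $ui''=0$, one has $ug'i'=ui''f'=0$, so $ug'$ factors as $\tilde u d'$ for a unique $\tilde u\colon C'\to E$; then $\tilde u hd=\tilde u d'g=ug'g=0$ and $d$ is an epimorphism, so $\tilde u h=0$ and $\tilde u=wh'$ for a unique $w\colon C''\to E$; finally $wd''g'=wh'd'=\tilde u d'=ug'$ and $g'$ is an epimorphism, whence $wd''=u$. Together with the previous step this shows that $A''\rightarrowtail B''\twoheadrightarrow C''$ is a conflation.

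The main obstacle is the structural work of the first two paragraphs: introducing $P$, recognising $\theta$ as an inflation, and above all seeing $i''$ as a pushout of $\theta$. Once this is set up, verifying that $\theta$ and $d''$ have the required cokernel properties is a routine matter of chasing epimorphisms and cokernels.
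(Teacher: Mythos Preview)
Your proof is correct and follows essentially the same route as the paper's: your pushout object $P$ is the paper's $D$ (obtained there via Proposition~\ref{p:nine}), your $\theta$ is the paper's $v$, your $\bar f$ is the paper's $u'$, and both arguments use Lemma~\ref{l:s5lker} to see that this map into $B'$ is an inflation and then realise $i''$ as its pushout along $\bar f$. The only differences are that you verify the pushout property of the square $PA''B'B''$ and the cokernel property of $d''$ by direct diagram chases, whereas the paper invokes Lemmas~\ref{l:po} and~\ref{l:RW} for these steps.
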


\begin{proof} By Proposition \ref{p:nine} the morphism $(f,g,h)$ between the conflations 
$A\stackrel{i}\rightarrowtail B\stackrel{d}\twoheadrightarrow C$ and $A'\stackrel{i'}\rightarrowtail
B'\stackrel{d'}\twoheadrightarrow C'$ factors through some conflation $A'\rightarrowtail D\twoheadrightarrow C$
\[\SelectTips{cm}{}
\xymatrix{
A \ar@{>->}[d]_f \ar@{>->}[r]^{i} & B \ar[d]^{u} \ar@{->>}[r]^{d} & C \ar@{=}[d] \\ 
A' \ar@{=}[d] \ar@{>->}[r]^j & D \ar[d]^{v} \ar@{->>}[r]^p & C \ar@{>->}[d]^h \\ 
A' \ar@{>->}[r]^{i'} & B' \ar@{->>}[r]^{d'} & C'
}\] 
such that the upper left square and the lower right square of the diagram are pushouts. Then $u$ is an inflation,
and by Lemma \ref{l:s5lker}, $v$ is also an inflation. Since $f'f=0=0i$, the pushout property of the square $ABA'D$
yields the existence of a unique morphism $u':D\to A''$ such that $u'j=f'$ and $u'u=0$. By Lemma \ref{l:RW},
$u'=\Coker(u)$. 

Denote $v'=\Coker(v)$. We claim that the following diagram is commutative:  
\[\SelectTips{cm}{}
\xymatrix{
B \ar@{=}[d] \ar@{>->}[r]^{u} & D \ar@{>->}[d]_{v} \ar@{->>}[r]^{u'} & A'' \ar[d]^{i''} \\ 
B \ar@{>->}[r]^{g} & B' \ar@{->>}[d]_{v'} \ar@{->>}[r]^{g'} & B'' \ar[d]^{d''} \\
 & C'' \ar@{=}[r] & C'' 
}\] We already have $vu=g$. The square $DCB'C'$ is a pushout by Lemma \ref{l:po}, whence we have $v'=h'd'=d''g'$ by
Lemma \ref{l:RW}. Furthermore, note that $(i''f')f=0=0i$. Then $i''u',g'v:D\to B''$ are both solutions of the pushout
problem for the square $ABA'D$, because $(i''u')j=i''f'$, $(i''u')u=0$, and also $(g'v)j=i''f'$,
$(g'v)u=0$. Hence we must have $i''u'=g'v$. The square $DA''B'B''$ is a pushout by Lemma \ref{l:po}.
Then $i''$ is an inflation by axiom [R2]. Finally, $d''=\Coker(i'')$ by Lemma \ref{l:RW}. Hence the sequence
$A''\stackrel{i''}\to B''\stackrel{d''}\to C''$ is a conflation. 
\end{proof} 

\begin{remark} \rm The Snake Lemma also holds in a one-sided exact category by \cite[Proposition~C2.2]{Rosenberg}.
\end{remark}

\section{Weakly idempotent complete right exact categories}

First we shall use the characterization of sections in weakly idempotent complete categories from Lemma \ref{l:section}
to obtain a generalization to right exact categories of a result known for exact categories (see \cite[Lemma A.2]{MS}). 

\begin{proposition} \label{p:defl} Let $\mathcal{C}$ be a weakly idempotent complete right exact category and let
$f:A\to B$ and $g:B\to C$ be morphisms in $\mathcal{C}$. Then $\left [\begin{smallmatrix} g & gf \end{smallmatrix}\right
]:B\oplus A\to C$ is a deflation if and only if $g$ is a deflation.
\end{proposition}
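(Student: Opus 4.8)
The plan is to prove both implications using the "expand by a split summand" trick that makes $\left[\begin{smallmatrix} g & gf \end{smallmatrix}\right]$ differ from $g$ only by composition with an isomorphism of a direct sum with a split-exact piece. First I would handle the easy direction: suppose $\left[\begin{smallmatrix} g & gf \end{smallmatrix}\right]\colon B\oplus A\to C$ is a deflation. Consider the automorphism $\left[\begin{smallmatrix} 1 & -f \\ 0 & 1 \end{smallmatrix}\right]\colon B\oplus A\to B\oplus A$, which is an isomorphism (hence an inflation by Lemma~\ref{l:basic}(ii)), and note $\left[\begin{smallmatrix} g & gf \end{smallmatrix}\right]\left[\begin{smallmatrix} 1 & -f \\ 0 & 1 \end{smallmatrix}\right]=\left[\begin{smallmatrix} g & 0 \end{smallmatrix}\right]$. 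Thus $\left[\begin{smallmatrix} g & 0 \end{smallmatrix}\right]\colon B\oplus A\to C$ is a deflation. Since $\mathcal{C}$ is weakly idempotent complete, the section $\left[\begin{smallmatrix} 1 \\ 0 \end{smallmatrix}\right]\colon B\to B\oplus A$ has a cokernel; precomposing $\left[\begin{smallmatrix} g & 0 \end{smallmatrix}\right]$ with it recovers $g$, and one checks (using Lemma~\ref{l:ds} to split off the trivial summand, exactly as in the proof of Lemma~\ref{l:section}) that $g$ is a deflation. Concretely, $\left[\begin{smallmatrix} g & 0 \end{smallmatrix}\right]$ is the composite of the deflation $\left[\begin{smallmatrix} g & 0 \end{smallmatrix}\right]$ through the conflation $B\rightarrowtail B\oplus A\twoheadrightarrow A$, so cancelling the split piece $0\rightarrowtail A\twoheadrightarrow 0$ leaves the deflation $g$.

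For the converse, suppose $g\colon B\to C$ is a deflation with corresponding inflation $i\colon K\rightarrowtail B$ (so $g=\operatorname{Coker}(i)$). By Corollary~\ref{c:infl}—or more directly by the direct-sum stability of conflations in Lemma~\ref{l:ds}—the morphism $\left[\begin{smallmatrix} i & 0 \\ 0 & 1 \end{smallmatrix}\right]\colon K\oplus A\rightarrowtail B\oplus A$ is an inflation, and its cokernel is $\left[\begin{smallmatrix} g & 0 \end{smallmatrix}\right]\colon B\oplus A\twoheadrightarrow C$ (this is again just the direct sum of the conflation $K\rightarrowtail B\twoheadrightarrow C$ with $A\rightarrowtail A\twoheadrightarrow 0$, using Lemma~\ref{l:ds}). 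Hence $\left[\begin{smallmatrix} g & 0 \end{smallmatrix}\right]$ is a deflation. Now compose on the right with the isomorphism $\left[\begin{smallmatrix} 1 & f \\ 0 & 1 \end{smallmatrix}\right]\colon B\oplus A\to B\oplus A$ to get $\left[\begin{smallmatrix} g & gf \end{smallmatrix}\right]$, which is therefore a deflation by the isomorphism-invariance of conflations (Lemma~\ref{l:ds}).

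The main subtlety—and the reason weak idempotent completeness is needed—is the passage in the first direction from "$\left[\begin{smallmatrix} g & 0 \end{smallmatrix}\right]$ is a deflation" to "$g$ is a deflation": one must cancel a split direct summand from a deflation, which is exactly the content that fails without weak idempotent completeness (compare Lemma~\ref{l:section} and \cite[Remark~7.4]{Buhler}). I expect to argue this by taking the kernel of $\left[\begin{smallmatrix} g & 0 \end{smallmatrix}\right]$, which decomposes as $K\oplus A$ with $K$ the kernel of $g$ (using that $\left[\begin{smallmatrix} 1 \\ 0 \end{smallmatrix}\right]$ is a section and hence has a cokernel), so that the conflation $K\oplus A\rightarrowtail B\oplus A\twoheadrightarrow C$ is the direct sum of $K\rightarrowtail B\twoheadrightarrow C$ and $A\rightarrowtail A\twoheadrightarrow 0$; stripping the latter split summand via Lemma~\ref{l:ds} leaves $g=\operatorname{Coker}(i)$ a deflation. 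The converse direction, by contrast, needs no idempotent-completeness hypothesis at all, only [R1], [R2] and Lemma~\ref{l:ds}.
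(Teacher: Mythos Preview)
Your overall strategy---shearing by the automorphism $\left[\begin{smallmatrix} 1 & f\\ 0 & 1\end{smallmatrix}\right]$ to reduce to $\left[\begin{smallmatrix} g & 0 \end{smallmatrix}\right]$ and then relating this to $g$---is the same as the paper's. The converse direction is fine; your appeal to Lemma~\ref{l:ds} (direct sum of the conflations $K\rightarrowtail B\twoheadrightarrow C$ and $A\stackrel{1}{\rightarrowtail} A\twoheadrightarrow 0$) is in fact a bit cleaner than the paper's pushout argument for that implication, and you are right that this direction needs no idempotent-completeness hypothesis.

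The forward direction, however, has a genuine gap. Grant that the kernel of $\left[\begin{smallmatrix} g & 0 \end{smallmatrix}\right]$ splits as $D\oplus A$ with the inflation $D\oplus A\rightarrowtail B\oplus A$ in the diagonal form $\left[\begin{smallmatrix} i & 0\\ 0 & 1\end{smallmatrix}\right]$ (this already needs a short argument: one must produce the section $A\to \Ker\left[\begin{smallmatrix} g & 0 \end{smallmatrix}\right]$, use weak idempotent completeness to split it, and verify that the off-diagonal entries vanish---the paper does this explicitly). You then invoke Lemma~\ref{l:ds} to ``strip off'' the split summand $A\stackrel{1}{\rightarrowtail} A\twoheadrightarrow 0$ and conclude that $D\stackrel{i}{\to} B\stackrel{g}{\to} C$ is a conflation. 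But Lemma~\ref{l:ds} does not say this: it asserts that conflations are closed under direct sums, not that a direct summand of a conflation is again a conflation. The cancellation statement you want is a separate result in the paper and requires axiom~[R3], i.e.\ a \emph{strongly} right exact structure, which is not assumed in Proposition~\ref{p:defl}.

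The paper closes this gap using axiom~[R2] instead: one checks that the square
\[
\SelectTips{cm}{}
\xymatrix{
D\oplus A \ar[d]_{\left[\begin{smallmatrix} 1 & 0\end{smallmatrix}\right]} \ar@{>->}[r]^-{\left[\begin{smallmatrix} i & 0\\ 0 & 1\end{smallmatrix}\right]} & B\oplus A \ar[d]^{\left[\begin{smallmatrix} 1 & 0\end{smallmatrix}\right]} \\
D \ar[r]_i & B
}
\]
is a pushout, so $i$ is an inflation by~[R2], and then Proposition~\ref{p:part} identifies its cokernel as $g$. That pushout step is precisely what is missing from your argument; once you insert it, the rest stands.
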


\begin{proof} Suppose that $\left [\begin{smallmatrix} g & gf \end{smallmatrix}\right ]:B\oplus A\to C$ is a deflation.
Then the kernel $\left [\begin{smallmatrix} u \\ r \end{smallmatrix}\right ]:K\to B\oplus A$ of $\left
[\begin{smallmatrix} g & gf \end{smallmatrix}\right ]:B\oplus A\to C$ is
an inflation. Consider the isomorphism $\left [\begin{smallmatrix} 1 & f \\ 0 & 1 \end{smallmatrix}\right ]:B\oplus A\to
B\oplus A$ and denote $\left [\begin{smallmatrix} u' \\ r' \end{smallmatrix}\right ]=\left [\begin{smallmatrix} 1 & f \\
0 & 1 \end{smallmatrix}\right ]\cdot \left [\begin{smallmatrix} u \\ r \end{smallmatrix}\right ]$. Then $r'=r$ and we
have an isomorphism of short exact sequences
\[\SelectTips{cm}{}
\xymatrix{
K \ar@{=}[d] \ar@{>->}[r]^-{\left [\begin{smallmatrix} u \\ r \end{smallmatrix}\right ]} & B\oplus A \ar[d]_-{\left
[\begin{smallmatrix} 1 & f \\ 0 & 1
\end{smallmatrix}\right ]} \ar@{->>}[r]^-{\left [\begin{smallmatrix} g & gf \end{smallmatrix}\right ]} & C \ar@{=}[d]
\\ K \ar@{>->}[r]_-{\left [\begin{smallmatrix} u' \\ r \end{smallmatrix}\right ]} & B\oplus A \ar@{->>}[r]_-{\left
[\begin{smallmatrix} g & 0 \end{smallmatrix}\right ]} & C 
}\] 
which implies that the lower sequence is a conflation by Lemma \ref{l:ds}. The lower deflation is determined as being
$\left [\begin{smallmatrix} g & 0 \end{smallmatrix}\right ]$ by the commutativity of the right square. The composition
of the morphisms $\left [\begin{smallmatrix} g & 0 \end{smallmatrix}\right ]:B\oplus A\to C$ and $\left
[\begin{smallmatrix} 0 \\ 1 \end{smallmatrix}\right ]:A\to B\oplus A$ is zero, hence there is a unique morphism $s:A\to
K$ such that $\left [\begin{smallmatrix} u' \\ r \end{smallmatrix}\right ]s=\left [\begin{smallmatrix} 0 \\ 1
\end{smallmatrix}\right ]$. Since $\C$ is weakly idempotent complete, the section $s$ has a cokernel, say $p:K\to D$.
By the proof of Lemma \ref{l:section}, there is a unique morphism $v:D\to K$ such that $vp=1_K-sr$, and $\left
[\begin{smallmatrix} p \\ r \end{smallmatrix}\right ]:K\to D\oplus A$ is an isomorphism. We have
$u'vp=u'$, because $u'-u'vp=u'sr=0$. Thus we obtain the following commutative diagram: 
\[\SelectTips{cm}{}
\xymatrix{
K \ar[d]_-{\left [\begin{smallmatrix} p \\ r \end{smallmatrix}\right ]} \ar@{>->}[r]^-{\left
[\begin{smallmatrix} u' \\ r \end{smallmatrix}\right ]} & B\oplus A \ar@{=}[d] \ar@{->>}[r]^-{\left [\begin{smallmatrix}
g & 0 \end{smallmatrix}\right ]} & C \ar@{=}[d] \\
D\oplus A \ar[d]_-{\left [\begin{smallmatrix} 1 & 0 \end{smallmatrix}\right ]} \ar@{>->}[r]^-{\left [\begin{smallmatrix}
u'v & 0 \\ 0 & 1 \end{smallmatrix}\right ]} & B\oplus A \ar[d]_-{\left [\begin{smallmatrix} 1 & 0
\end{smallmatrix}\right ]} \ar@{->>}[r]^-{\left [\begin{smallmatrix} g & 0 \end{smallmatrix}\right ]} & C \ar@{=}[d] \\
D \ar@{>->}[r]_{u'v} & B \ar@{->>}[r]_{g} & C 
}\] 
The second row is a conflation by Lemma \ref{l:ds}. The lower left square is a pushout, and so $u'v:D\to B$ is an
inflation by [R2]. Moreover, it may be completed by Proposition \ref{p:part} to the above commutative diagram, where the
lower row is a conflation. The lower deflation is determined as being $g$ by the commutativity of the lower right
square.

Conversely, suppose that $g$ is a deflation. Then it has a kernel, say $r:D\to B$, which is an inflation. Then we
have the following commutative diagram: 
\[\SelectTips{cm}{}
\xymatrix{
D \ar[d]_-{\left [\begin{smallmatrix} 1 \\ 0 \end{smallmatrix}\right ]} \ar@{>->}[r]^{r} & B \ar[d]_-{\left
[\begin{smallmatrix} 1 \\ 0 \end{smallmatrix}\right ]} \ar@{->>}[r]^{g} & C \ar@{=}[d] \\ D\oplus A \ar@{>->}[r]_-{\left
[\begin{smallmatrix} r & 0 \\ 0 & 1 \end{smallmatrix}\right ]} & B\oplus A \ar@{->>}[r]_-{\left [\begin{smallmatrix} g &
0 \end{smallmatrix}\right ]} & C 
}\] 
This is because the left square is a pushout, and so $\left [\begin{smallmatrix} r & 0 \\ 0 & 1 \end{smallmatrix}\right
]:D\oplus A\to B\oplus A$ is an inflation by [R2]. Moreover, the square may be completed by Proposition \ref{p:part}
to the above commutative diagram, where the lower row is a conflation. Then we have an isomorphism of short exact
sequences
\[\SelectTips{cm}{}
\xymatrix{
D\oplus A \ar@{=}[d] \ar@{>->}[r]^-{\left [\begin{smallmatrix} r & 0 \\ 0 & 1 \end{smallmatrix}\right ]} & B\oplus A
\ar[d]_-{\left [\begin{smallmatrix} 1 & -f \\ 0 & 1 \end{smallmatrix}\right ]} \ar@{->>}[r]^-{\left [\begin{smallmatrix}
g & 0 \end{smallmatrix}\right ]} & C \ar@{=}[d] \\ D\oplus A \ar@{>->}[r]_-{\left [\begin{smallmatrix} r & -f \\ 0 & 1
\end{smallmatrix}\right ]} & B\oplus A \ar@{->>}[r]_-{\left [\begin{smallmatrix} g & gf \end{smallmatrix}\right ]} & C 
}\] 
in which the lower row is a conflation by Lemma \ref{l:ds}. The lower deflation is determined as being $\left
[\begin{smallmatrix} g & gf \end{smallmatrix}\right ]$ by the commutativity of the right square. Hence $\left
[\begin{smallmatrix} g & gf \end{smallmatrix}\right ]:B\oplus A\to C$ is a deflation.
\end{proof}

We continue with some characterizations of weak idempotent completeness of right exact categories satisfying {\rm
[R0$*$]}.

\begin{lemma} \label{l:wic} Let $\mathcal{C}$ be a  right exact category. Then the following are equivalent:

(i) $\mathcal{C}$ is weakly idempotent complete and satisfies {\rm [R0$*$]}.

(ii) Every section is an inflation.

(iii) Every retraction is a deflation.
\end{lemma}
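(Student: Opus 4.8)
The plan is to prove the cyclic chain (i) $\Rightarrow$ (ii) $\Rightarrow$ (iii) $\Rightarrow$ (i). I will use throughout that isomorphisms are inflations (Lemma \ref{l:basic}) and that inflations are closed under composition ([R1]), so that a morphism obtained from an inflation by composing with isomorphisms on either side is again an inflation (dually for deflations, viewed as cokernels of inflations); the remaining ingredients are Lemma \ref{l:secret} (the definition of weak idempotent completeness), Lemma \ref{l:section} (sections in a weakly idempotent complete category are, up to isomorphism, canonical inclusions $A\to A\oplus C$), and Proposition \ref{p:axiom1/2} (under {\rm [R0$*$]}, such an inclusion is an inflation). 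For (i) $\Rightarrow$ (ii): given a section $s\colon A\to B$, Lemma \ref{l:section} identifies $s$, up to an isomorphism on its codomain, with a canonical inclusion $A\to A\oplus C$ for some object $C$, which is an inflation by Proposition \ref{p:axiom1/2}; hence $s$ is a composite of an isomorphism and an inflation, so an inflation.

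For (ii) $\Rightarrow$ (iii): let $r\colon B\to A$ be a retraction and fix $s\colon A\to B$ with $rs=1_A$. Then $s$ is a section, hence an inflation by (ii), so it has a cokernel $d\colon B\to C$. From $(1_B-sr)s=0$ and $d=\Coker(s)$ there is a unique $t\colon C\to B$ with $td=1_B-sr$; using $ds=0$ and that $d$ is epic one gets $dt=1_C$, so $t$ is a section, hence an inflation by (ii). Checking similarly that $rt=0$ and $sr+td=1_B$, the last identity shows that any $f$ with $ft=0$ factors through $r$, uniquely since $r$ is epic; thus $r=\Coker(t)$, and so $r$ is a deflation.

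For (iii) $\Rightarrow$ (i): applying (iii) to the retraction $1_A$ shows $1_A$ is a deflation, and its kernel is the morphism $0\to A$, which is therefore an inflation; thus {\rm [R0$*$]} holds. Applying (iii) to an arbitrary retraction shows it is a deflation, hence has a kernel, so every retraction has a kernel and $\mathcal{C}$ is weakly idempotent complete by Lemma \ref{l:secret}.

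I expect (ii) $\Rightarrow$ (iii) to be the main obstacle: since weak idempotent completeness is exactly what is at stake here, one cannot split the idempotent $1_B-sr$ abstractly, and must build the complementary inflation $t$ by hand from the cokernel of $s$ and then verify directly that $r$ is its cokernel, the identity $sr+td=1_B$ being the heart of this. A secondary point is that the conclusion must be transferred along the isomorphism produced by Lemma \ref{l:section} and along the various (co)kernel identifications, which is the reason for recording at the outset that inflations and deflations are stable under composition with isomorphisms.
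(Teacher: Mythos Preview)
Your argument is correct and follows essentially the same route as the paper, which simply records that, via Lemma~\ref{l:section}, the proof is the same as \cite[Corollary~7.5]{Buhler}. Your write-up spells out the details that the paper leaves implicit; in particular your (ii)$\Rightarrow$(iii) step, building the complementary inflation $t$ from the cokernel of $s$ and verifying $r=\Coker(t)$ via $sr+td=1_B$, is exactly the expected elaboration.
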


\begin{proof} Using Lemma~\ref{l:section} the proof is the same as \cite[Corollary 7.5]{Buhler}.
\end{proof}

Now we have the dual of Lemma~\ref{l:basic} (ii).

\begin{corollary} Let $\mathcal{C}$ be a weakly idempotent complete right exact category satisfying also {\rm [R0$*$]}.
Then every isomorphism is a deflation. 
\end{corollary}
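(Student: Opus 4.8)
The plan is to deduce this immediately from Lemma~\ref{l:wic}, which is the real content; this corollary is just a one-line application, dual to Lemma~\ref{l:basic}(ii). The key observation is that every isomorphism $\varphi\colon A\to B$ in $\mathcal{C}$ is in particular a \emph{retraction}: the morphism $\varphi^{-1}\colon B\to A$ satisfies $\varphi\varphi^{-1}=1_B$, so $\varphi$ has a right inverse in the sense of the definition recalled in the Preliminaries. Now, since by hypothesis $\mathcal{C}$ is weakly idempotent complete and satisfies [R0$*$], condition (i) of Lemma~\ref{l:wic} holds, hence the equivalent condition (iii) holds as well, namely every retraction is a deflation. Applying this to $\varphi$ shows that $\varphi$ is a deflation, which is exactly the assertion.

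I do not expect any genuine obstacle: the only thing to notice is that an isomorphism is a retraction (indeed also a section), after which Lemma~\ref{l:wic} does all the work. For completeness one could mention the alternative route: $\varphi$ is a section, so by Lemma~\ref{l:wic}(ii) it is an inflation, and since the cokernel of an isomorphism is $0\colon B\to 0$, which is a deflation by [R0$*$] together with the fact that every deflation arises as a pushout of an inflation along a morphism to $0$, one again concludes that $\varphi$ is a deflation; but invoking Lemma~\ref{l:wic}(iii) directly is the most economical argument, so that is the one I would write down.
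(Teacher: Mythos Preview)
Your main argument is correct and is exactly what the paper intends: the corollary is stated immediately after Lemma~\ref{l:wic} with no separate proof, and the implicit reasoning is precisely that an isomorphism is a retraction, hence a deflation by Lemma~\ref{l:wic}(iii). One minor remark: the ``alternative route'' you sketch does not quite work as stated, since showing that $\varphi$ is an inflation with cokernel $B\to 0$ tells you $B\to 0$ is a deflation, not that $\varphi$ itself is; but since you correctly identify and choose the direct argument via (iii), this does not affect your proof.
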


We show now that in the case of weakly idempotent complete categories our notion of
strongly one-sided exact category reduces to the notion of one-sided exact category considered by Rump
(\cite[Definition~4]{Rump2}). This amounts to the following proposition. 

\begin{proposition} \label{p:obscure} Let $\mathcal{C}$ be a strongly right exact category. Then the following are
equivalent:

(i) $\mathcal{C}$ is weakly idempotent complete.

(ii) If $i:A\to B$ and $p:B\to C$ are morphisms in $\mathcal{C}$ such that $pi$ is an inflation, then $i$ is an
inflation.  
\end{proposition}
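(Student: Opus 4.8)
The plan is to prove $(ii)\Rightarrow(i)$ and $(i)\Rightarrow(ii)$ separately; the first implication is immediate and the second carries the content.

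For $(ii)\Rightarrow(i)$, recall that being weakly idempotent complete means every section has a cokernel (Lemma~\ref{l:secret}, Definition~\ref{l:splitid}). So let $s\colon A\to B$ be a section with retraction $r\colon B\to A$. Then $rs=1_A$ is an inflation by Lemma~\ref{l:basic}(i), and applying hypothesis (ii) to the pair $s,r$ shows that $s$ itself is an inflation; hence $s$ sits in a conflation $A\rightarrowtail B\twoheadrightarrow\Coker(s)$ and in particular has a cokernel. Thus $\mathcal{C}$ is weakly idempotent complete.

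For $(i)\Rightarrow(ii)$, let $i\colon A\to B$ and $p\colon B\to C$ be morphisms with $pi$ an inflation. Since a strongly right exact category satisfies [R0$*$] (Lemma~\ref{l:basic}(iii)), weak idempotent completeness puts us in the situation of Lemma~\ref{l:wic}: sections are inflations and retractions are deflations. By axiom [R3] it then suffices to show that $i$ has a cokernel, and this is the heart of the matter. My construction would be: form $m=\left[\begin{smallmatrix}i\\pi\end{smallmatrix}\right]\colon A\to B\oplus C$, which is an inflation because $pi$ is, by Corollary~\ref{c:infl}, and let $d\colon B\oplus C\twoheadrightarrow E$ be its cokernel; observe that the retraction $\left[\begin{smallmatrix}-p & 1_C\end{smallmatrix}\right]\colon B\oplus C\to C$ (with section $\left[\begin{smallmatrix}0\\1_C\end{smallmatrix}\right]$) is a deflation by Lemma~\ref{l:wic} and kills $m$, hence factors as $\sigma d$ for a unique $\sigma\colon E\to C$; check that $\sigma$ is again a retraction, so a deflation, with kernel an inflation $k\colon K\rightarrowtail E$; and, writing $n=\left[\begin{smallmatrix}1_B\\p\end{smallmatrix}\right]\colon B\to B\oplus C$ (so that $m=ni$), note that $\sigma dn=0$, whence $dn$ factors through $k$ as $dn=k\psi$ for a unique $\psi\colon B\to K$.

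The remaining step is to verify that $\psi\colon B\to K$ is a cokernel of $i$. That $\psi i=0$, and that any $\nu\colon B\to W$ with $\nu i=0$ factors through $\psi$ (namely $\left[\begin{smallmatrix}\nu & 0\end{smallmatrix}\right]$ factors through $d$, and precomposing the resulting map with $n$ recovers $\nu$), are purely formal; the only non-obvious point is that $\psi$ is an epimorphism. For this, write $t\colon C\to E$ for the section $d\left[\begin{smallmatrix}0\\1_C\end{smallmatrix}\right]$ of $\sigma$, and note that $\left[\begin{smallmatrix}dn & t\end{smallmatrix}\right]\colon B\oplus C\to E$ equals $d$ composed with the triangular automorphism $\left[\begin{smallmatrix}1_B & 0\\p & 1_C\end{smallmatrix}\right]$ of $B\oplus C$, hence is an epimorphism; under the isomorphism $\left[\begin{smallmatrix}k & t\end{smallmatrix}\right]\colon K\oplus C\to E$ (an isomorphism by the dual of the proof of Lemma~\ref{l:section}, since $\sigma$ is a retraction with kernel $k$ and section $t$) this map corresponds to $\left[\begin{smallmatrix}\psi & 0\\0 & 1_C\end{smallmatrix}\right]=\psi\oplus 1_C$, so $\psi$ is an epimorphism. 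Once $\Coker(i)$ is in hand, axiom [R3] immediately gives that $i$ is an inflation. I expect the construction of $\Coker(i)$ to be the only real obstacle — in particular the bookkeeping that makes each auxiliary morphism ($m$, $\left[\begin{smallmatrix}-p & 1_C\end{smallmatrix}\right]$, $\sigma$, $n$, $k$) either an inflation (via Corollary~\ref{c:infl}) or a deflation (via Lemma~\ref{l:wic}), which is exactly where weak idempotent completeness is used; the final appeal to [R3] is then routine.
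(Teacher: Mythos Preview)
Your proof is correct and follows essentially the same route as the paper. For $(ii)\Rightarrow(i)$ your argument is identical to the paper's; for $(i)\Rightarrow(ii)$ the paper simply cites the dual of \cite[Proposition~7.6]{Buhler} via [R2], Lemma~\ref{l:secret} and [R3], and your construction is a detailed unpacking of that argument: the object $E=\Coker(m)$ is exactly the pushout of the inflation $pi$ along $i$ (so your appeal to Corollary~\ref{c:infl} is [R2] in disguise), the retraction $\sigma$ is the canonical one on the pushout, the splitting $E\cong K\oplus C$ is where weak idempotent completeness enters, and your $\psi$ exhibits $\Coker(i)$ so that [R3] finishes. One cosmetic point: Corollary~\ref{c:infl} literally requires the \emph{first} component to be an inflation, so strictly one should apply it to $\left[\begin{smallmatrix}pi\\ i\end{smallmatrix}\right]$ and then compose with the swap isomorphism — but this is harmless.
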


\begin{proof} 
$(i)\Rightarrow(ii)$ By [R2], Lemma~\ref{l:secret} and [R3] the proof is dual to that of \cite[Proposition 7.6]{Buhler}.

$(ii)\Rightarrow(i)$ Let $s:A\to B$ be a section in $\mathcal{C}$. Then there is $r:B\to A$ such that $rs=1_A$. Since
$1_A$ is an inflation by Lemma \ref{l:basic}, so is $s$ by hypothesis. Now $\mathcal{C}$ is weakly idempotent complete
by Lemma \ref{l:wic}. 
\end{proof}

As a consequence, we have a partial converse of Corollary~\ref{c:infl}. It provides a generalization of the dual of
\cite[Lemma 4.2]{MS}.

\begin{corollary} Let $\mathcal{C}$ be a weakly idempotent complete strongly right exact category and
let $f:A\to B$ and $g:B\to C$ be morphisms in $\mathcal{C}$. Then $\left [\begin{smallmatrix} f \\ gf
\end{smallmatrix}\right ]:A\to B\oplus C$ is an inflation if and only if $f$ is an inflation.
\end{corollary}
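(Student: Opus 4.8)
The plan is to prove the two implications separately. The ``if'' direction should be immediate from Corollary~\ref{c:infl}: since $\mathcal{C}$ is strongly right exact it satisfies axiom {\rm [R0$*$]} by Lemma~\ref{l:basic}~(iii), so Corollary~\ref{c:infl} applied to the morphisms $f:A\to B$ and $gf:A\to C$ shows that $\left[\begin{smallmatrix} f \\ gf \end{smallmatrix}\right]:A\to B\oplus C$ is an inflation whenever $f$ is.

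For the ``only if'' direction, assume that $\left[\begin{smallmatrix} f \\ gf \end{smallmatrix}\right]:A\to B\oplus C$ is an inflation. First I would pass to $\left[\begin{smallmatrix} f \\ 0 \end{smallmatrix}\right]$ by composing on the left with the automorphism $\left[\begin{smallmatrix} 1 & 0 \\ -g & 1 \end{smallmatrix}\right]$ of $B\oplus C$ (which has inverse $\left[\begin{smallmatrix} 1 & 0 \\ g & 1 \end{smallmatrix}\right]$): this automorphism is an inflation by Lemma~\ref{l:basic}~(ii), so by [R1] the composite $\left[\begin{smallmatrix} 1 & 0 \\ -g & 1 \end{smallmatrix}\right]\left[\begin{smallmatrix} f \\ gf \end{smallmatrix}\right]=\left[\begin{smallmatrix} f \\ 0 \end{smallmatrix}\right]$ is again an inflation. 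Next I would factor $\left[\begin{smallmatrix} f \\ 0 \end{smallmatrix}\right]=\left[\begin{smallmatrix} 1 \\ 0 \end{smallmatrix}\right]f$, with $\left[\begin{smallmatrix} 1 \\ 0 \end{smallmatrix}\right]:B\to B\oplus C$ a section. Since $\mathcal{C}$ is weakly idempotent complete, condition (ii) of Proposition~\ref{p:obscure} holds; applying it to the pair of morphisms $f:A\to B$ and $\left[\begin{smallmatrix} 1 \\ 0 \end{smallmatrix}\right]:B\to B\oplus C$, whose composite $\left[\begin{smallmatrix} f \\ 0 \end{smallmatrix}\right]$ has just been shown to be an inflation, we conclude that $f$ is an inflation.

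There is no genuinely hard step here. The only point worth noticing is the choice of the unipotent automorphism that removes the $gf$-entry, and --- more importantly --- that in the last step one must appeal to Proposition~\ref{p:obscure} rather than to axiom [R3] directly, since $f$ is not assumed to admit a cokernel. This is precisely where the weak idempotent completeness of $\mathcal{C}$ is used.
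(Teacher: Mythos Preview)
Your proof is correct and uses the same two ingredients as the paper, namely Corollary~\ref{c:infl} for the ``if'' direction and Proposition~\ref{p:obscure} for the ``only if'' direction. The only difference is that the paper avoids your intermediate step with the automorphism: it simply observes that $\left[\begin{smallmatrix} f \\ gf \end{smallmatrix}\right]=\left[\begin{smallmatrix} 1 \\ g \end{smallmatrix}\right]f$ and applies Proposition~\ref{p:obscure} directly to this factorization, so your reduction to $\left[\begin{smallmatrix} f \\ 0 \end{smallmatrix}\right]$ is unnecessary (though harmless).
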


\begin{proof} Note that $\left [\begin{smallmatrix} f \\ gf \end{smallmatrix}\right ]=\left [\begin{smallmatrix} 1 \\
g \end{smallmatrix}\right ]f$, and use Proposition \ref{p:obscure} and Corollary~\ref{c:infl}.
\end{proof}

Let us point out that one cannot characterize weak idempotent completeness of strongly right
exact categories by the dual of condition (ii) in Proposition \ref{p:obscure}. 

\begin{example} \rm Let $\C$ be the pretorsion class defined in Example~\ref{e:pre-torsion}. Then $\C$ has a
strongly right, but not left, exact structure. Moreover, $\C$ is a right quasi-abelian category, and
consequently weakly idempotent complete. On the other hand, we have seen that axiom [R3$^{\rm op}$] does not hold. 

The Isbell category from Example \ref{e:isbell} gives an example for the dual case of a strongly left exact category. 
\end{example}

Under certain conditions, the class of conflations in a right exact category is closed under arbitrary direct sums (if
they do exist) of short exact sequences. In order to see that, we shall need the following notions.

\begin{definition} \rm An object $I$ of a right exact category $\mathcal{C}$ is called \emph{injective} if for every
inflation $A\rightarrowtail B$, every morphism $A\to I$ extends to a morphism $B\to I$. We say that $\C$ \emph{has
enough injectives} if for every object $A$ of $\mathcal{C}$ there is an inflation $i:A\rightarrowtail I$ with $I$ an
injective object. 
\end{definition}

\begin{example} \rm (1) Let $\C$ be a right quasi-abelian category with enough injectives and let $\D$ be a
coreflective full subcategory of $\C$. Let $b:\C\to \D$ be the right adjoint of the inclusion $i:\D\to \C$. For any
object $D$ of $\D$ one has an inflation $D\to I$ in $\A$ for some injective object $I$ of $\C$. It is easy to see that
$b(I)$ is injective in $\D$, and the induced morphism $D\to b(I)$ is an inflation in $\D$. Hence $\D$ has enough
injectives. For instance, this applies to a pretorsion class in a Grothendieck category. Note that $\D$ might not have
an exact structure (see Example \ref{e:pre-torsion}). 

(2) Let $\mathcal{C}$ be a finitely accessible additive category, that is, an additive category with direct limits such
that the class of finitely presented objects is skeletally small, and every object is a direct limit of finitely
presented objects \cite[p.~8]{Prest}. Then $\mathcal{C}$ has split idempotents \cite[p.~22]{Prest}, and so it is weakly
idempotent complete \cite[Remark~6.2]{Buhler}. Via the Yoneda functor, $\C$ is equivalent to the full subcategory $\F$
of flat objects in the category $\A=(\fp(\mathcal{C})^{\op},\Ab)$ of all contravariant additive functors from the full
subcategory $\fp(\mathcal{C})$ of finitely presented objects of $\mathcal{C}$ to the category ${\rm Ab}$ of abelian
groups, and the pure exact sequences in $\C$ are those which become exact in $\A$ through the Yoneda embedding
\cite[Theorem~3.4]{Prest}. Then $\F$ is extension closed in the abelian category $\A$, and so, the class of all pure
exact sequences in $\C$ gives rise to an exact structure on $\C$ by \cite[Lemma 10.20]{Buhler}. The injective objects in
this exact category are the pure-injective objects, and by \cite[Theorem~6]{Herzog}, for every object $A$ of
$\mathcal{C}$, there is a pure monomorphism $A\rightarrowtail I$ for some pure-injective object $I$ of $\mathcal{C}$.
Hence $\C$ has enough injectives. Note that $\C$ might not be pre-abelian (see \cite[Corollary~3.7]{Prest}). 
\end{example}

We give now a generalization of \cite[Proposition A.6]{MS}, having a similar proof.

\begin{proposition} \label{p:enough} Let $\mathcal{C}$ be a weakly idempotent complete strongly right exact category
with enough injectives. Then:

(i) A morphism $f:A\to B$ is an inflation if and only if the map $\Hom_{\mathcal{C}}(f,I):\Hom_{\mathcal{C}}(B,I)\to
\Hom_{\mathcal{C}}(A,I)$ is an epimorphism of abelian groups for every injective object $I$ of $\mathcal{C}$.
 
(ii) If $(A_k\stackrel{i_k}\rightarrowtail B_k\stackrel{d_k}\twoheadrightarrow C_k)_{k\in K}$ is a family of
conflations having a coproduct, then the short exact sequence 
\[\SelectTips{cm}{}
\xymatrix{
\bigoplus_{k\in K}A_k \ar[rr]^{\oplus_{k\in K}i_k} && \bigoplus_{k\in K}B_k \ar[rr]^{\oplus_{k\in K}d_k} &&
\bigoplus_{k\in K}C_k 
}\] 
is a conflation.
\end{proposition}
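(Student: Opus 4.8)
The plan is to establish part (i) first, since part (ii) will follow from it together with standard properties of $\Hom$ functors and coproducts. For part (i), the forward implication is immediate from the definition of an injective object: if $f:A\to B$ is an inflation and $I$ is injective, then every morphism $A\to I$ extends along $f$, which is precisely the statement that $\Hom_{\mathcal{C}}(f,I)$ is surjective. The substantive direction is the converse. So suppose $\Hom_{\mathcal{C}}(f,I)$ is an epimorphism for every injective $I$. Using that $\mathcal{C}$ has enough injectives, I would choose an inflation $j:A\rightarrowtail I$ with $I$ injective. By surjectivity of $\Hom_{\mathcal{C}}(f,I)$ there is a morphism $g:B\to I$ with $gf=j$. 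Now $j=gf$ is an inflation, and since $\mathcal{C}$ is weakly idempotent complete and strongly right exact, Proposition~\ref{p:obscure} applies: from $pi$ being an inflation (here with $i=f$, $p=g$) we conclude that $f$ itself is an inflation. That completes part (i).

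For part (ii), I would argue that the coproduct morphism $\oplus_{k\in K} i_k : \bigoplus_{k} A_k \to \bigoplus_{k} B_k$ is an inflation by applying the criterion in part (i). Fix an injective object $I$. One must show that $\Hom_{\mathcal{C}}(\oplus i_k, I)$ is an epimorphism of abelian groups. The key point is the natural isomorphism $\Hom_{\mathcal{C}}(\bigoplus_k X_k, I)\cong \prod_k \Hom_{\mathcal{C}}(X_k, I)$, under which $\Hom_{\mathcal{C}}(\oplus i_k, I)$ is identified with the product map $\prod_k \Hom_{\mathcal{C}}(i_k, I)$. Since each $i_k$ is an inflation (being the inflation of a conflation), each $\Hom_{\mathcal{C}}(i_k, I)$ is surjective by the forward direction of part (i) (or directly by injectivity of $I$), and a product of surjections of abelian groups is surjective. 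Hence $\Hom_{\mathcal{C}}(\oplus i_k, I)$ is an epimorphism for every injective $I$, so $\oplus i_k$ is an inflation by part (i).

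It then remains to identify the cokernel of $\oplus_k i_k$ as $\oplus_k d_k$. Since each $d_k$ is a cokernel of $i_k$ and coproducts of cokernels are cokernels of coproducts (a coproduct is a colimit, and colimits commute with colimits, so $\oplus_k \mathrm{Coker}(i_k) = \mathrm{Coker}(\oplus_k i_k)$ when the relevant coproducts exist), the map $\oplus_k d_k$ is a cokernel of $\oplus_k i_k$. By the remark following the definition of a right exact category, an inflation together with its cokernel forms a conflation, using \cite[Proposition~3.4, Chapter IV]{Sten}; hence the displayed short exact sequence is a conflation.

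The main obstacle I anticipate is purely formal rather than deep: one must be careful that all the invocations of Proposition~\ref{p:obscure} and the enough-injectives hypothesis are legitimate, and that the isomorphism $\Hom_{\mathcal{C}}(\bigoplus X_k, I)\cong \prod \Hom_{\mathcal{C}}(X_k, I)$ together with the identification of $\Hom_{\mathcal{C}}(\oplus i_k, I)$ with $\prod \Hom_{\mathcal{C}}(i_k, I)$ is stated correctly (this is just the universal property of the coproduct, but it is the linchpin of part (ii)). The subtle point worth flagging is that we never need the coproduct $\bigoplus_k B_k$ or $\bigoplus_k C_k$ to be an inflation-compatible construction a priori; weak idempotent completeness is used only through Proposition~\ref{p:obscure} to upgrade ``$gf$ is an inflation'' to ``$f$ is an inflation'' in the proof of part (i), and everything else is a formal consequence.
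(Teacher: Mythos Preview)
Your proposal is correct and follows essentially the paper's approach; for part~(ii) the two arguments coincide. In part~(i) you are in fact slightly more direct: the paper first argues that $\left[\begin{smallmatrix} f \\ i \end{smallmatrix}\right]:A\to B\oplus I$ is an inflation (since $i$ is an inflation, via Corollary~\ref{c:infl}) and then deduces that $f$ is an inflation from the partial converse of that corollary, which itself rests on Proposition~\ref{p:obscure}, whereas you bypass this detour and apply Proposition~\ref{p:obscure} directly to the factorization $j=gf$.
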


\begin{proof} (i) The ``only if'' part is clear. Conversely, let $i:A\rightarrowtail I$ be an inflation in $\mathcal{C}$
with $I$ an injective object. By hypothesis, there is a morphism $g:B\to I$ such that $gf=i$. By Proposition
\ref{p:pushout}, $\left [\begin{smallmatrix} f \\ gf \end{smallmatrix}\right ]=\left [\begin{smallmatrix} f \\ i
\end{smallmatrix}\right ]:A\to B\oplus I$ is an inflation. Finally, by Corollary \ref{c:infl}, $f$ is an
inflation.
 
(ii) Clearly, $\bigoplus_{k\in K}d_k:\bigoplus_{k\in K}B_k\to \bigoplus_{k\in K}C_k$ is the cokernel of $\bigoplus_{k\in
K}i_k$. Hence it is enough to show that $\bigoplus_{k\in K}i_k$ is an inflation. But this follows from part (i), since
$\Hom_{\mathcal{C}}(-,I)$ takes coproducts in $\mathcal{C}$ into products of abelian groups, and products of abelian
groups are exact.
\end{proof}

\section{Derived categories}

In this section we show that the derived category of a right exact category can be constructed similarly to the derived
category of an exact category (see Neeman \cite{Neeman}, Keller \cite{K96}, B\"uhler \cite{Buhler}). The only new thing
to be checked here is that the mapping cone of a chain map between acyclic complexes stays acyclic. 

Let us recall some needed terminology. Throughout $\C$ will be an additive category. Denote by $\mathbf{Ch}(\C)$ the
additive category of complexes and chain maps over $\C$, and by $\mathbf{K}(\C)$ the additive homotopy category whose
objects are the objects of $\mathbf{Ch}(\C)$ and whose morphisms are homotopy classes of morphisms in $\mathbf{Ch}(\C)$.
Recall also that the \emph{mapping cone} ${\rm cone}(f)$ of a chain map $f:A\to B$ in $\mathbf{Ch}(\C)$ is the complex
whose $n$-th component is ${\rm cone}(f)^n=A^{n+1}\oplus B^n$ and $n$-th differential is $d^n_f=\left
[\begin{smallmatrix} -d_A^{n+1}&0 \\ f^{n+1}&d_B^n \end{smallmatrix}\right ]$. Note that the mapping cone defines an
endofunctor of $\mathbf{Ch}(\C)$. 
 
The concept of acyclic chain complex over a right exact category $\C$ can be introduced in the usual way \cite{K96}. 

\begin{definition} \rm Let $\C$ be a right exact category. A chain complex $A$ over $\C$ is called \emph{acyclic} if
each differential $d^{n-1}_A$ factors as 
\[\SelectTips{cm}{}
\xymatrix{
A^{n-1} \ar@{->>}[dr]_{p^{n-1}} \ar[rr]^{d^{n-1}} & & A^n \\ 
& Z^n A \ar@{>->}[ur]_{i^{n-1}} &
}\] 
where $i^{n-1}$ is an inflation that is a kernel of $d^n$ and $p^{n-1}$ is a deflation that is a cokernel of
$d^{n-2}$. 
\end{definition}

Having prepared the needed properties in the previous sections, the following lemma may be proved similarly to 
\cite[Lemma~10.3]{Buhler}. We include a proof since we would like to point out precisely the places where we use our
results for right exact categories. 

\begin{lemma} \label{l:cone} Let $\C$ be a right exact category. Then the mapping cone of a chain map $f:A\to B$ between
acyclic complexes over $\C$ is acyclic.
\end{lemma}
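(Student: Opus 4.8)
The plan is to reduce the acyclicity of $\mathrm{cone}(f)$ to a statement about the differentials of $A$ and $B$ and their factorizations through cycle objects, and then to build the required inflation/deflation factorization of $d^n_f$ by hand using the homological machinery developed in Sections~5 and~6 --- principally Proposition~\ref{p:double}, Lemma~\ref{l:ds}, and the characterizations of pushouts in Propositions~\ref{p:part} and~\ref{p:double}. Since $A$ and $B$ are acyclic, for each $n$ we have factorizations $d^{n}_A = i^{n}_A p^{n}_A$ with $p^{n}_A : A^{n} \twoheadrightarrow Z^{n+1}A$ a deflation (cokernel of $d^{n-1}_A$) and $i^{n}_A : Z^{n+1}A \rightarrowtail A^{n+1}$ an inflation (kernel of $d^{n+1}_A$), and similarly for $B$. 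The chain map $f$ induces, for each $n$, a morphism $Z^{n}f : Z^{n}A \to Z^{n}B$ making the obvious squares commute (because $f$ commutes with the differentials, $f^{n} i^{n-1}_A$ kills nothing extra and factors through the kernel $i^{n-1}_B$; dually on the $p$ side).

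**Key steps.** First I would record that $\mathrm{cone}(f)^n = A^{n+1} \oplus B^n$ and that the candidate cycle object at level $n$ is $Z^n\,\mathrm{cone}(f) = Z^{n+1}A \oplus Z^n B$, with the candidate deflation $q^{n-1} : A^n \oplus B^{n-1} \to Z^{n+1}A \oplus Z^n B$ and candidate inflation $j^{n-1} : Z^{n+1}A \oplus Z^n B \to A^{n+1} \oplus B^n$ built blockwise from the $p$'s, $i$'s, and appropriate components of $f$. The content is to show two things: (1) the relevant block matrix $j^{n-1}$ is an inflation and is a kernel of $d^{n+1}_f$; (2) the block matrix $q^{n-1}$ is a deflation and is a cokernel of $d^{n-2}_f$; and (3) $d^{n-1}_f = j^{n-1} q^{n-1}$. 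The decisive input for step (1) is Proposition~\ref{p:double}: given the commutative ladder built from $Z^{n+1}A \rightarrowtail A^{n+1} \twoheadrightarrow Z^{n+2}A$ and $Z^n B \rightarrowtail B^n \twoheadrightarrow Z^{n+1}B$ together with the connecting map $f$, Proposition~\ref{p:double} produces exactly a conflation of the shape $Z^{n+1}A \oplus (\text{something}) \rightarrowtail \cdots$, and after identifying terms via the naturality of $Zf$ and $f$, this is the conflation witnessing that $j^{n-1}$ is an inflation with the right cokernel. Dually --- i.e. applying the same reasoning in $\C^{\mathrm{op}}$, which is only legitimate here because the deflation side of acyclicity is built from genuine cokernels, so the argument really uses only [R1], [R2] and Lemmas~\ref{l:RW}, \ref{l:po}, \ref{l:ds} --- one handles step (2). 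Step (3) and the verification that the factors are a kernel (resp.\ cokernel) of the neighbouring differential are routine block-matrix identities combined with the universal properties already available.

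**Main obstacle.** The hard part will be the bookkeeping in step (1): assembling the correct block-matrix entries for $j^{n-1}$ and $q^{n-1}$ (the off-diagonal entry involving $f^{n+1}$ composed with the cycle projections/inclusions has to be placed and signed so that everything commutes with $d^n_f = \left[\begin{smallmatrix} -d_A^{n+1} & 0 \\ f^{n+1} & d_B^n \end{smallmatrix}\right]$), and then recognizing the resulting short exact sequence as a direct sum of known conflations twisted by a pushout, so that Lemma~\ref{l:ds} and Proposition~\ref{p:double} apply verbatim. Concretely, after choosing the factorization I expect the cone-level conflation to appear as the outcome of feeding the acyclicity-conflations of $A$ and $B$ at consecutive degrees, linked by $f$, into Proposition~\ref{p:double}; once that identification is made the remaining verifications (that the two halves compose to $d^{n-1}_f$ and that each half is the appropriate kernel/cokernel) are immediate from the universal properties. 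I would also remark that no use of [R0$*$] or [R3] is needed, so the lemma holds for an arbitrary right exact category, matching the statement.
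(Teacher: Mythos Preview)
Your proposal has a genuine gap: the candidate cycle object is wrong. You assert that $Z^n\operatorname{cone}(f) = Z^{n+1}A \oplus Z^nB$, but the kernel of $d^n_f = \left[\begin{smallmatrix} -d_A^{n+1} & 0 \\ f^{n+1} & d_B^n \end{smallmatrix}\right]$ inside $A^{n+1}\oplus B^n$ is in general only an \emph{extension} of $Z^{n+1}A$ by $Z^nB$, not the direct sum. Concretely, for a block map $\left[\begin{smallmatrix} -i^n_A & 0 \\ \ast & i^{n-1}_B \end{smallmatrix}\right]:Z^{n+1}A\oplus Z^nB \to A^{n+1}\oplus B^n$ to land in $\ker d^n_f$ you would need $d_B^n\circ \ast = f^{n+1} i^n_A = i^n_B\, g^{n+1}$, i.e.\ $p^n_B\circ \ast = g^{n+1}$; but a lift of $g^{n+1}:Z^{n+1}A\to Z^{n+1}B$ through the deflation $p^n_B:B^n\twoheadrightarrow Z^{n+1}B$ need not exist. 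Equivalently, the extension class of $Z^nB\rightarrowtail \ker d^n_f \twoheadrightarrow Z^{n+1}A$ is the pushforward of $[Z^nA\rightarrowtail A^n\twoheadrightarrow Z^{n+1}A]$ along $g^n$, which is typically nonzero. So no block matrix of the shape you describe can serve as the required inflation, and Proposition~\ref{p:double} cannot be applied directly to the raw conflations of $A$ and $B$ (note also that the proposition requires the two rows to share the same leftmost object).

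The paper fixes exactly this: it first invokes Proposition~\ref{p:nine} to factor $(g^n,f^n,g^{n+1})$ through an intermediate conflation $Z^nB\stackrel{j^n}\rightarrowtail Z^nC\stackrel{q^n}\twoheadrightarrow Z^{n+1}A$, where $Z^nC$ is the pushout of $i^n_A$ along $g^n$. This $Z^nC$ is the correct cycle object of the cone. Only then does Proposition~\ref{p:double} apply (with common left object $Z^nB$ and the pushout squares produced by Proposition~\ref{p:nine}), yielding directly the conflation $Z^nC\rightarrowtail A^{n+1}\oplus B^n\twoheadrightarrow Z^{n+1}C$; the factorization $d^{n-1}_f$ through $Z^nC$ is then a short diagram check. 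In particular no separate ``dual'' argument is needed for the deflation side, which is fortunate since your appeal to $\C^{\mathrm{op}}$ is not available here: $\C^{\mathrm{op}}$ is only left exact, and Proposition~\ref{p:double} does not dualize within a right exact category.
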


\begin{proof}  It is easy to show that the morphisms $g^n$ and $g^{n+1}$ in the following diagram
\[\SelectTips{cm}{}
\xymatrix{
A^{n-1} \ar[ddd]_{f^{n-1}} \ar@{->>}[dr]_{p^{n-1}_A} \ar[rr]^{d^{n-1}_A} & & A^n \ar[ddd]^{f^n}
\ar@{->>}[dr]_{p^{n}_A} \ar[rr]^{d^{n}_A} & & A^{n+1} \ar[ddd]^{f^{n+1}} \\ 
& Z^n A \ar@{-->}[d]^{g^n} \ar@{>->}[ur]_{i^{n}_A} & & Z^{n+1} A \ar@{-->}[d]^{g^{n+1}} \ar@{>->}[ur]_{i^{n+1}_A} & \\
& Z^n B \ar@{>->}[dr]^{i^n_B} & & Z^{n+1} B \ar@{>->}[dr]^{i^{n+1}_B} & \\
B^{n-1} \ar@{->>}[ur]^{p^{n-1}_B} \ar[rr]_{d^{n-1}_B} & & B^n \ar@{->>}[ur]^{p^{n}_B} \ar[rr]_{d^{n}_B} & & B^{n+1}
}\]  
exist and they are the unique morphisms making the diagram commutative. 

Starting with the morphism $(g^n,f^n,g^{n+1})$ between the conflations $Z^nA \stackrel{i^n_A}\rightarrowtail A^n
\stackrel{j^n_A}\twoheadrightarrow Z^{n+1}A$ and $Z^nB \stackrel{i^n_B}\rightarrowtail B^n
\stackrel{j^n_B}\twoheadrightarrow Z^{n+1}B$, one uses Proposition \ref{p:nine} to obtain some object $Z^nC$ and a
$3\times 3$ commutative diagram in which the middle row is an induced conflation $Z^nB\stackrel{\ 
j^n}\rightarrowtail Z^nC\stackrel{q^n}\twoheadrightarrow Z^{n+1}A$. Then we have the following commutative diagram
\[\SelectTips{cm}{}
\xymatrix{
A^{n-1} \ar[dd]_{f_1^{n-1}} \ar@{->>}[dr]_{p^{n-1}_A} \ar[rr]^{d^{n-1}_A} & & A^n \ar[dd]^{f_1^n}
\ar@{->>}[dr]_{p^{n}_A} \ar[rr]^{d^{n}_A} & & A^{n+1} \ar[dd]^{f_1^{n+1}} \\ 
& Z^n A \ar@{}[dr] | {PO} \ar[dd]^{g^n} \ar@{>->}[ur]_{i^{n}_A} & & Z^{n+1} A \ar@{}[dr] | {PO} \ar[dd]^{g^{n+1}}
\ar@{>->}[ur]_{i^{n+1}_A} & \\
Z^{n-1}C \ar@{}[dr] | {PO} \ar[dd]_{f_2^{n-1}} \ar@{->>}[ur]_{q^{n-1}} & & Z^nC \ar@{}[dr] | {PO} \ar[dd]^{f_2^{n}}
\ar@{->>}[ur]_{q^{n}} & & Z^{n+1}C \ar[dd]^{f_2^{n+1}} \\ 
& Z^n B \ar@{>->}[dr]^{i^n_B} \ar@{>->}[ur]_{j^n} & & Z^{n+1} B \ar@{>->}[dr]^{i^{n+1}_B} \ar@{>->}[ur]_{j^{n+1}} & \\
B^{n-1} \ar@{->>}[ur]^{p^{n-1}_B} \ar[rr]_{d^{n-1}_B} & & B^n \ar@{->>}[ur]^{p^{n}_B} \ar[rr]_{d^{n}_B} & & B^{n+1}
}\]  
in which we have each $f^n=f_2^nf_1^n$ and the quadrilaterals marked by PO are pushouts. 

Now by Proposition \ref{p:double} each sequence 
\[\SelectTips{cm}{}
\xymatrix{
Z^{n}C \ar@{>->}[rr]^-{\left [\begin{smallmatrix} -i_A^{n+1} q^{n} \\ f_2^{n} \end{smallmatrix}\right ]} &&
A^{n+1}\oplus B^{n} \ar@{->>}[rr]^-{\left [\begin{smallmatrix} f_1^{n+1} & j^{n+1} p_B^{n} \end{smallmatrix}\right ]}
&& Z^{n+1}C 
}\]  
is a conflation. We also have commutative diagrams
\[\SelectTips{cm}{}
\xymatrix{
A^n\oplus B^{n-1} \ar@{->>}[dr]_{\left [\begin{smallmatrix}  f_1^{n} & j^{n} p_B^{n-1} \end{smallmatrix}\right ]}
\ar[rr]^{\left [\begin{smallmatrix} -d_A^{n} & 0 \\ f^{n} & d_B^{n-1} \end{smallmatrix}\right ]} & & A^{n+1}\oplus B^n
\\ & Z^n C \ar@{>->}[ur]_{\left [\begin{smallmatrix} -i_A^{n+1} q^{n} \\ f_2^{n} \end{smallmatrix}\right ]} &
}\] 
It follows that the mapping cone of $f$ is acyclic.
\end{proof}

Let $\mathbf{Ac}(\C)$ be the full subcategory of the homotopy category $\mathbf{K}(\C)$ consisting of all acyclic
complexes over a right exact category $\C$. Note that $\mathbf{Ac}(\C)$ is a full additive subcategory of
$\mathbf{K}(\C)$, because Lemma \ref{l:ds} implies that the direct sum of two acyclic complexes is acyclic. Moreover,
Lemma \ref{l:cone} yields the following corollary.

\begin{corollary} Let $\C$ be a right exact category. Then $\mathbf{Ac}(\C)$ is a triangulated subcategory of
$\mathbf{K}(\C)$.
\end{corollary}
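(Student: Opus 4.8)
The plan is to deduce the corollary from the standard criterion for triangulated subcategories: a full additive subcategory $\mathcal{S}$ of a triangulated category $\mathcal{T}$ which contains the zero object, is closed under the shift functor and its inverse, and has the property that the mapping cone of every morphism in $\mathcal{S}$ again lies in $\mathcal{S}$, is a triangulated subcategory of $\mathcal{T}$ (the triangles of $\mathcal{S}$ being those triangles of $\mathcal{T}$ all of whose vertices lie in $\mathcal{S}$); see, e.g., \cite{Neeman} or \cite{Buhler}. So the task reduces to verifying these three properties for $\mathbf{Ac}(\C)$ inside $\mathbf{K}(\C)$.

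The first two are immediate. From the discussion preceding the corollary we already know, using Lemma \ref{l:ds}, that $\mathbf{Ac}(\C)$ is a full additive subcategory of $\mathbf{K}(\C)$; moreover the zero complex is acyclic, since each of its differentials is $1_0\colon 0\to 0$, which is both an inflation and a deflation by [R0], so the factorizations required in the definition are trivial. For closure under the shift functor I would simply observe that the differentials of $A[\pm 1]$ are those of $A$, up to a shift of degree and a change of sign; since a morphism $f$ and its negative $-f$ have the same kernels and the same cokernels, and since composing a deflation or an inflation with multiplication by $-1$ on the appropriate object again gives a deflation or an inflation by Lemma \ref{l:ds}, it follows directly from the definition that $A$ is acyclic if and only if $A[1]$ is, if and only if $A[-1]$ is. The third property, closure under mapping cones of chain maps between acyclic complexes, is exactly Lemma \ref{l:cone}.

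There is no genuine obstacle here: the substance of the argument has been carried out already in Lemma \ref{l:cone}, and the corollary is merely its formal packaging. The one point that deserves a word of care is that one must use the version of the criterion above which does \emph{not} presuppose $\mathbf{Ac}(\C)$ to be closed under isomorphisms in $\mathbf{K}(\C)$ --- in general it is not, since a contractible complex need not be acyclic (its differentials need not admit the factorizations required by the definition), yet it is homotopy equivalent to the acyclic zero complex. This causes no difficulty, because the rotation and octahedral axioms for the inherited triangles rely only on closure under shifts and under cones, which we have just checked.
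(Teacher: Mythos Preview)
Your proposal is correct and takes essentially the same approach as the paper: both derive the corollary from Lemma \ref{l:cone} together with the already-noted additivity of $\mathbf{Ac}(\C)$, and the paper gives no argument beyond citing that lemma. Your explicit verifications of the zero complex and shift-closure, and your careful remark that $\mathbf{Ac}(\C)$ need not be closed under isomorphism in $\mathbf{K}(\C)$ when [R0$*$] fails (since, e.g., the contractible complex $0\to A\xrightarrow{1}A\to 0$ requires $0\to A$ to be an inflation in order to be acyclic), are correct refinements that the paper leaves implicit.
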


Analogously to the derived category of an exact category, we may now define the \emph{derived category} of a right exact
category $\C$ as the Verdier quotient $\mathbf{D}(\C)=\mathbf{K}(\C)/\mathbf{Ac}(\C)$ (see Keller \cite[\S 10, \S
11]{K96}).

\end{document}